\titlespacing*{\section}{0pt}{14pt}{4pt}
\titlespacing*{\subsection}{0pt}{8pt}{3pt}
\tikzset{
  notarrow/.style = {decoration = {markings, mark = at position 0.5 with { \node[transform shape, xscale = 0.9, yscale=0.9,font={\sf}] {X}; } }, postaction = {decorate} },
}
\def\maketimestamp{\count255=\time
\divide\count255 by 60\relax
\edef\thetime{\the\count255:}%
\multiply\count255 by-60\relax
\advance\count255 by\time
\edef\thetime{\thetime\ifnum\count255<10 0\fi\the\count255}
\edef\thedate{\number\day-\ifcase\month\or Jan\or Feb\or Mar\or
             Apr\or May\or Jun\or Jul\or Aug\or Sep\or Oct\or
             Nov\or Dec\fi-\number\year}
\def\timstamp{\hbox to\hsize{\tt\hfil\thedate\hfil\thetime\hfil}}}
\numberwithin{equation}{section}  
\newtheorem{theorem}{Theorem}[section]
\newtheorem{lemma}[theorem]{Lemma}
\newtheorem{proposition}[theorem]{Proposition}
\newtheorem{corollary}[theorem]{Corollary}
\theoremstyle{definition}
\newtheorem{definition}[theorem]{Definition} 
\newtheorem{example}{Example}
\theoremstyle{remark}
\newtheorem{remark}{Remark}
\DeclareMathOperator{\Span}{span} %
\DeclareMathOperator{\supp}{supp} %
\DeclareMathOperator*{\esssup}{ess\,sup} %
\DeclareMathOperator*{\essinf}{ess\,inf} %
\DeclareMathOperator{\exponential}{e}
\DeclareMathOperator{\im}{im}
\newcommand{\tiS}[1][g]{\ensuremath{\set{T_h {#1}_p}_{\hvar \in \LH, p \in P}}} 
\newcommand{\gaborset}[1][g]{\ensuremath{\set{E_{\gamma}T_{\lambda} {#1}}_{\lambda\in\Lambda,\gamma\in\Gamma}}} 
\newcommand{\gabor}[1][g]{\ensuremath{E_{\gamma}T_{\lambda} {#1}}} 
\newcommand{\SynthesisOperator}{\textnormal{F}}
\newcommand{\anni}{\mathsf{A}}
\newcommand{\myexp}[1]{\exponential^{#1}}
\newcommand{\restrict}[3][]{\ensuremath{#2\rvert_{#3}^{#1}}} 
\newcommand{\card}[1]{\abs{#1}} 
\newcommand*{\numbersys}[1]{\ensuremath{\mathbb{#1}}}
\newcommand*{\C}{\numbersys{C}}
\newcommand*{\R}{\numbersys{R}}
\newcommand*{\Q}{\numbersys{Q}}
\newcommand*{\Z}{\numbersys{Z}}
\newcommand*{\N}{\numbersys{N}}
\newcommand*{\T}{\numbersys{T}}
\newcommand*{\cH}{\mathcal{H}}
\newcommand*{\cF}{\mathcal{F}}
\newcommand*{\cT}{\mathcal{T}}
\newcommand*{\cG}{\mathcal{G}}
\newcommand*{\cK}{\mathcal{K}}
\newcommand{\abs}[1]{\ensuremath{\left\lvert#1\right\rvert}}
\newcommand{\abssmall}[1]{\ensuremath{\lvert#1\rvert}}
\newcommand{\absbig}[1]{\ensuremath{\bigl\lvert#1\bigr\rvert}}
\newcommand{\absBig}[1]{\ensuremath{\Bigl\lvert#1\Bigr\rvert}}
\newcommand{\norm}[2][]{\ensuremath{\left\lVert#2\right\rVert_{#1}}}
\newcommand{\normsmall}[2][]{\ensuremath{\lVert#2\rVert_{#1}}}
\newcommand{\innerprod}[3][]{\ensuremath{\left\langle #2,#3\right\rangle_{\! #1}}}
\newcommand{\set}[1]{\ensuremath{\left\lbrace{#1}\right\rbrace}}
\newcommand{\seq}[1]{\ensuremath{\left\lbrace{#1}\right\rbrace}}
\newcommand{\setprop}[2]{\ensuremath{\left\lbrace{#1} : {#2}\right\rbrace}}
\newcommand{\seqsmall}[1]{\ensuremath{\lbrace{#1}\rbrace}}
\newcommand{\lat}[1]{\ensuremath {#1}} 
\newcommand{\hvar}{h}
\newcommand{\LG}{\ensuremath\lat{\Gamma}}
\newcommand{\LL}{\ensuremath\lat{\Lambda}}
\newcommand{\LH}{\ensuremath\lat{H}}
\newcommand\cD{\mathcal{D}} 
\newcommand{\ie}{i.e.,\xspace} 
\newcommand{\eg}{e.g.,\xspace} 
\newcommand{\etc}{etc.\@\xspace}
\newcommand{\cf}{cf.\xspace}
\newcommand{\almoste}{\text{a.e.}}
\newcommand{\cfm}{\mu_M} 
\newlength{\dhatheight}
\newcommand{\doublehat}[1]{%
	\settoheight{\dhatheight}{\ensuremath{\widehat{#1}}}%
	\addtolength{\dhatheight}{-0.35ex}%
    	\widehat{\vphantom{\rule{1pt}{\dhatheight}}%
    	\smash{\hspace{-2pt} \widehat{#1}}}}
\newcommand{\ghat}{\widehat{G}}
\newcommand{\ghhat}{\doublehat{G}}
\newcommand*\oline[1]{%
  \vbox{%
    \hrule height 0.5pt
    \kern0.25ex
    \hbox{%
      \kern-0.1em
      \ifmmode#1\else\ensuremath{#1}\fi
      \kern-0.1em
    }
  }
}
\def\blfootnote{\xdef\@thefnmark{}\@footnotetext} 
\def\subjclass{\xdef\@thefnmark{}\@footnotetext}
\long\def\symbolfootnote[#1]#2{\begingroup%
\def\thefootnote{\fnsymbol{footnote}}\footnote[#1]{#2}\endgroup} 
  \renewenvironment{abstract}{%
      \titlepage
      \null\vfil
      \@beginparpenalty\@lowpenalty
      \begin{center}%
        \bfseries \abstractname
        \@endparpenalty\@M
      \end{center}}%
     {\par\vfil\null\endtitlepage}
  \renewenvironment{abstract}{%
      \if@twocolumn
        \section*{\abstractname}%
      \else
        \small
        \list{}{%
          \settowidth{\labelwidth}{\textbf{\abstractname:}}
          \setlength{\leftmargin}{50pt}
          \setlength{\rightmargin}{50pt}
          \setlength{\itemindent}{\labelwidth}
          \addtolength{\itemindent}{\labelsep}
        }
        \item[\textbf{\abstractname:}]

      \fi}
      {\if@twocolumn\else\endlist\fi}
\begin{document}

\title{Co-compact Gabor systems on locally compact abelian groups}

\date{April 15, 2015}

 \author{Mads Sielemann Jakobsen\footnote{Technical University of Denmark, Department of Applied Mathematics and Computer Science, Matematiktorvet 303B, 2800 Kgs.\ Lyngby, Denmark, E-mail: \protect\url{msja@dtu.dk}}\phantom{$\ast$}, Jakob Lemvig\footnote{Technical University of Denmark, Department of Applied Mathematics and Computer Science, Matematiktorvet 303B, 2800 Kgs.\ Lyngby, Denmark, E-mail: \protect\url{jakle@dtu.dk}}}

 \blfootnote{2010 {\it Mathematics Subject Classification.} Primary
   42C15, Secondary: 43A32, 43A70.} \blfootnote{{\it Key words
     and phrases.} Dual frames, duality principle, fiberization, frame, Gabor
   system, Janssen representation, LCA group, Walnut representation,
   Wexler-Raz biorthogonality relations, Zak transform}

\maketitle 

\thispagestyle{plain}
\begin{abstract} 
In this work we extend classical structure and duality results in Gabor analysis
  on the euclidean space to the setting of second countable locally
  compact abelian (LCA) groups.  We formulate the concept of
  rationally oversampling of Gabor systems in an LCA group and prove
  corresponding characterization results via the Zak transform. From
  these results we derive non-existence results for critically sampled
  continuous Gabor frames. We obtain general characterizations in time
  and in frequency domain of when two Gabor generators yield dual
  frames. Moreover, we prove the Walnut and Janssen representation of the
  Gabor frame operator and consider the Wexler-Raz biorthogonality
  relations for dual generators. Finally, we prove 
  the duality principle for Gabor frames.  Unlike most duality results
  on Gabor systems, we do not rely on the fact that the translation
  and modulation groups are discrete and co-compact subgroups.  Our results only
  rely on the assumption that either one of the translation and
  modulation group (in some cases both) are co-compact subgroups of
  the time and frequency domain. This presentation offers a unified approach
  to the study of continuous and the discrete Gabor frames.
\end{abstract}

\section{Introduction}
\label{sec:introduction}

In Gabor analysis structure and duality results, such as the Zibulski-Zeevi, the Walnut
and the Janssen representation of the frame operator, the Wexler-Raz
biorthogonal relations, and the duality principle, play an important
role. These results go back to a
series of papers in the 1990s
\cite{MR1350701,MR1310658,MR1350700,TolOrr92,
  MR1353539,MR1155734,ISI:A1990EJ96700001,18290047,MR1601115,MR1460623,MR1350650,MR1448221}
on (discrete) regular Gabor systems in $L^2(\R)$ and $L^2(\R^n)$ with modulations and translations along
full-rank lattices. The results now constitute a fundamental part of the
theory. In $L^2(\R^n)$, a regular Gabor system is a discrete family
of functions of the form $\set{E_\gamma T_\lambda g}_{\lambda \in A\Z^n,\gamma \in
  B\Z^n}$, where $g \in L^2(\R^n)$, $E_\gamma
T_\lambda g(x)=\myexp{2\pi i \gamma \cdot x } g(x-\lambda)$, and $A,B
\in {\text{GL}}_n(\R)$.

For Gabor systems on locally compact abelian (LCA) groups, the picture
is a lot less complete.  Rieffel~\cite{MR941652} proved in 1988 a weak
form of the Janssen representation called the \emph{fundamental
  identity in Gabor analysis} (FIGA) for Gabor systems in $L^2(G)$
with modulations and translations along a closed subgroup in $G \times
\ghat$, where $G$ is a second countable LCA group and $\ghat$ its dual
group. Most other structure and duality results assume Gabor systems in
$L^2(G)$ with modulations and translations along discrete and co-compact subgroups (also called uniform lattices), \eg
the Wexler-Raz biorthogonal relations for such uniform lattice Gabor systems
appear implicitly in the work of Gr\"o{}chenig~\cite{MR1601095}. Uniform
lattices are discrete subgroups whose quotient group is compact, and
thus, they are natural generalizations of the concept of full-rank
lattices in $\R^n$. However, not all LCA groups possess uniform
lattices. This naturally leads to the
question to what extent the classical results on Gabor theory mentioned above can be formulated for
non-lattice Gabor systems. The current paper gives an answer to this question.

Thus, in this work we set out to extend the theory of structure and
duality results to a large class of 
Gabor systems in $L^2(G)$, where $G$ is a second countable LCA
group. We will focus on so-called \emph{co-compact Gabor systems}
$\gaborset$, where translation and modulation of $g \in L^2(G)$ are
along closed, co-compact (\ie the quotient group is compact) subgroups
$\LL \subset G$ and $\LG \subset \ghat$, respectively. In $L^2(\R^n)$
co-compact Gabor systems are of the form $\set{\myexp{2\pi i \gamma \cdot x
  }g(x-\lambda)}_{\lambda \in A
  (\R^s \times \Z^{d-s}),\gamma \in B (\R^r \times \Z^{d-r}) }$ for some choice of $0\le r,s \le
d$. Depending on the parameters $r$ and $s$, these Gabor systems
range from discrete over semi-continuous to continuous families. If
only one of the subsets $\LL$ and $\LG$ is a closed, co-compact
subgroup, we will use the terminology \emph{semi} co-compact Gabor
system. Clearly, co-compact and semi co-compact Gabor systems need not be
discrete. More importantly, such systems exist for all LCA groups, and
this setup unifies discrete and continuous Gabor theory. 

For co-compact Gabor systems we prove Walnut's representation
(Theorem~\ref{th:Gabor-walnut-dense}) and Janssen's representation
(Theorem~\ref{th:JanssenRepImpr}) of the Gabor frame operator, the
Wexler-Raz biorthogonal relations (Theorem~\ref{th:WexRaz}), and the
duality principle (Theorem~\ref{thm:ron-shen-duality}). As an example,
we mention that this generalized duality principle for $L^2(\R^n)$
says that the co-compact Gabor system
\[
\setprop{\myexp{2\pi i \gamma \cdot x }g(x-\lambda)}{\lambda \in A (\R^s \times \Z^{d-s}),\gamma
  \in B (\R^r \times \Z^{d-r}) }
\]
 is a continuous frame if, and only if,
the adjoint system 
\[\setprop{\myexp{2\pi i \gamma \cdot x }g(x-\lambda)}{\lambda \in (B^T)^{-1}
  (\{0\}^r \times \Z^{d-r}),\gamma \in (A^T)^{-1} (\{0\}^s \times \Z^{d-s}) }\] 
is a Riesz sequence.  We recall that a family of vectors $\seq{f_k}_{k \in M}$
in a Hilbert space $\cH$ is a continuous frame with respect to a measure $\mu$
on the index set $M$ if $\norm{f}^2 \asymp \int_{M}\abs{\innerprod{f}{f_k}}^2 d\mu$ for all $f
\in \cH$ and that $\seq{g_k}_{k \in \N}$ is a Riesz sequence if
$\norm[\ell^2]{c}^2 \asymp \norm{\sum_{k}c_k g_k}^2$ for all finite sequences
$c=\seq{c_k}_{k \in \N}$. Our proof of the duality principle relies on a simple
characterization of Riesz sequences in Hilbert spaces (Theorem
\ref{thm:riesz-seq}).

As we will see, the setting of co-compact Gabor systems is indeed a natural framework for 
structure and duality results. Closedness of the modulation and translation
subgroups is a standard assumption, and one cannot get very far
without it, \eg closedness allows for applications of key
identifications between subgroups and their annihilators as well as
applications of the Weil and the Poisson formulas. Co-compactness is, on
the other hand, non-standard, and to the best of our knowledge this
work is the first systematic study of co-compact Gabor systems.  Under
the second countability assumption on $G$, co-compactness is the weakest
assumption that yields an \emph{adjoint} Gabor system with modulations
and translations along discrete and countable subgroups. In
this way, co-compactness of the respective subgroups is the most
general setting for which the Wexler-Raz biorthogonal relations, the
duality conditions for dual generators and the duality principle can
be phrased in a way that resembles the classical statements in $L^2(\R^n)$.  As an example we
mention that in the Wexler-Raz biorthogonal relations, one
characterize duality of two Gabor frames by a biorthogonality
condition of the corresponding adjoint Gabor systems. Since $L^2(G)$
is separable, such a biorthogonality condition is only possible if the
adjoint systems are \emph{countable} sequences (which co-compactness
exactly guarantees). Furthermore, co-compact Gabor systems are
precisely the setting, where the Walnut and Janssen representation of
the continuous frame operator are a \emph{discrete} representation.

However, we begin our work on Gabor systems with a study of \emph{semi}
co-compact Gabor systems as special cases of co-compact translation
invariant systems, recently introduced in
\cite{BowRos2014,JakobsenReproducing2014}. For translation invariant
systems we consider fiberization characterization of frames for
translation invariant subspaces
(Theorem~\ref{thm:fibers-characterization}), generalizing results from
\cite{BowRos2014,MR2578463,MR1350650,MR1795633}. Using these
fiberization techniques we will develop Zak transform methods for
Gabor analysis in $L^2(G)$. This leads among other things to a concept
of rational oversampling in LCA groups
(Theorem~\ref{thm:char-vector-valued-Zak-time}) and a Zibulski-Zeevi
representation (Corollary~\ref{thm:char-Zibulski-Zeevi}). Furthermore,
we will prove the non-existence of continuous, semi co-compact Gabor
frames at ``critical density''
(Theorem~\ref{thm:non-existence-cont-gabor-critical-sampling}). We
also give characterizations of generators of dual semi co-compact
Gabor frames (Theorems~\ref{th:dual-Gabor-time} and
\ref{th:dual-Gabor}).

There are several advantages of the LCA group approach, one being that
the essential ingredient in our arguments often becomes more
transparent than in the special cases. The abstract approach also
allows us to unify results from the standard settings where $G$ is
usually $\R^n$, $\Z^n$, or $\Z_n$. This is not only useful for the
sake of generalizations, but, in some instances, it can also simplify
the proofs in the special cases. As an example we mention that our proof
of the Zak transform characterization of Gabor frames is based on two
applications of the same result on fiberizations of $L^2(G)$, but for
two different LCA groups $G$. In the Euclidean setting this would
require two different fiberization results, one for $G=\R^n$ and one
for $G=A\Z^n$ for $A \in \text{GL}_n(\R)$. In the setting of LCA groups we
can unify such results into one general result. On the other hand,
even for $G=\R^n$ most of our results are new. 

For related work  on
locally compact (abelian) groups we refer to the recent
papers \cite{BalazsSpeckbacher2014,BowRos2014,
  MR2578463,JakobsenReproducing2014,MR2283810,OleSaySon2014,MR2490216,MR3019110}
as well as the book \cite{MR1601119} 
and the references therein. 

The paper is organized as follows. In Section~\ref{sec:preliminaries}
we give a brief introduction to harmonic analysis on LCA groups and
frame theory. In Section~\ref{sec:TI-systems} we study co-compact
translation invariant systems, and specialize to \emph{semi} co-compact
Gabor systems in Section~\ref{sec:semi-co-compact}.  In
Section~\ref{sec:Gabor-frame-operator} we study the frame operator of
Gabor systems, and in Section~\ref{sec:duality-results} we present
duality results on co-compact Gabor frames. 

\section{Preliminaries} 
\label{sec:preliminaries}

In the following sections we set up notation and recall some useful results from Fourier analysis on
locally compact abelian groups and continuous frame theory.
\subsection{Fourier analysis on locally compact abelian groups} \label{sec:LCA}

In this paper $G$ will denote a second countable locally compact abelian group. To $G$ we associate
its dual group $\ghat$ which consists of all characters, \ie all continuous homomorphisms from $G$
into the torus $\T \cong \setprop{z\in\C}{ \abs{z} =1}$. Under pointwise multiplication $\ghat$ is
also a locally compact abelian group. Throughout the paper we use addition and multiplication as
group operation in $G$ and $\ghat$, respectively. By the Pontryagin duality theorem, the dual group
of $\ghat$ is isomorphic to $G$ as a topological group, \ie $\ghhat \cong G$. Moreover, if $G$ is
discrete, then $\ghat$ is compact, and if $G$ is compact, then $\ghat$ is discrete.

We denote the Haar measure on $G$ by $\mu_G$. The (left) Haar measure on any locally compact group
is unique up to a positive constant. From $\mu_G$ we define $L^1(G)$ and the Hilbert space $L^2(G)$
over the complex field in the usual way. $L^2(G)$ is separable, because $G$ is assumed to be second
countable. For functions $f\in L^1(G)$ we define the Fourier transform
\[ 
\cF f(\omega) = \hat{f}(\omega) = \int_{G} f(x) \, \overline{\omega(x)} \, d\mu_G(x), \quad
\omega \in \ghat.
\] 
If $f\in L^1(G), \hat{f} \in L^1(\ghat)$, and the measure on $G$ and $\ghat$ are normalized so that
the Plancherel theorem holds (see \cite[(31.1)]{MR0262773}), the function $f$ can be recovered from
$\hat{f}$ by the inverse Fourier transform
\[
f(x) = \cF^{-1}\hat{f}(x) = \int_{\ghat} \hat{f}(\omega) \, \omega(x) \, d\mu_{\ghat}(\omega),
\quad a.e.\ x\in G.
\]
We assume that the measure on a group $\mu_G$ and its dual group $\mu_{\ghat}$ are normalized this
way, and we refer to them as \emph{dual measures}. We will consider $\cF$ as an isometric isomorphism
between $L^2(G)$ and $L^2(\ghat)$.

On any locally compact abelian group $G$, we define the following three operators. For $a \in G$, the
operator $T_{a}$, called \emph{translation} by $a$, is defined by
\[
 T_{a}:L^2(G)\to L^2(G), \ (T_{a}f)(x) = f(x-a), \quad x\in G.
\]
For $\chi\in \ghat$, the
operator $E_{\chi}$, called \emph{modulation} by $\chi$, is defined by
\[ 
E_{\chi}:L^2(G)\to L^2(G), \ (E_{\chi}f)(x) = \chi(x) f(x), \quad x\in G.
\]
For $t \in L^\infty(G)$ the operator $M_{t}$, called \emph{multiplication} by $t$, is defined by
\[
M_{t}:L^2(G)\to L^2(G), \ (M_{t}f)(x) = t(x) f(x), \quad x\in G.
\] 
The following commutator relations will be used repeatedly: $T_aE_{\chi} = \overline{\chi(a)}\,
E_{\chi}T_a , \ \cF T_a = E_{a^{-1}} \cF$, and $ \cF E_{\chi} = T_{\chi} \cF.$

For a subset $H$ of an LCA group $G$, we define its annihilator as
\[ 
\anni(\ghat, H) = \{ \omega \in \ghat \, | \, \omega(x) = 1 \ \text{for all} \ x\in H \}.
\] 
When the group $\ghat$ is understood from the context, we will simply denote the annihilator
$\anni(\ghat, H) =H^{\perp}$. The annihilator is a closed subgroup in $\ghat$, and if $H$ is a
closed subgroup itself, then $\widehat H \cong \ghat / H^{\perp}$ and $\widehat{G/H} \cong
H^{\perp}$. These relations show that for a closed subgroup $H$ the quotient $G/H$ is compact if and
only if $H^{\perp}$ is discrete.

\begin{lemma} \label{le:finite-quotient-cong-annihilator} Let $H$ be a closed subgroup of $G$. If $G/H$ is finite, then $H^{\perp} \cong G/H$.
\end{lemma}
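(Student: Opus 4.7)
The plan is to reduce the claim to the well-known self-duality of finite abelian groups, using one of the duality identifications already recalled in the preliminaries.

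First, from the discussion just before the lemma we have the identification $\widehat{G/H} \cong H^{\perp}$ as topological groups whenever $H$ is closed in $G$. Under the hypothesis that $G/H$ is finite, it follows that the Pontryagin dual of $G/H$ is $H^{\perp}$, so the statement reduces to showing that any finite abelian group $K$ is isomorphic to its dual $\widehat{K}$. Then applying this with $K = G/H$ yields $H^{\perp} \cong \widehat{G/H} \cong G/H$, which is exactly what we want.

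Next, I would establish the self-duality of a finite abelian group $K$ by appealing to two standard facts: (i) the structure theorem for finite abelian groups, which writes $K \cong \Z/n_1\Z \oplus \cdots \oplus \Z/n_k\Z$, and (ii) the compatibility of Pontryagin duality with finite direct sums, i.e.\ $\widehat{A \oplus B} \cong \widehat{A} \oplus \widehat{B}$. Using these it suffices to check the cyclic case $\widehat{\Z/n\Z} \cong \Z/n\Z$, which is immediate: a character on $\Z/n\Z$ is determined by the image of the generator, which must be an $n$-th root of unity, so $\widehat{\Z/n\Z}$ is cyclic of order $n$.

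As a sanity check, note that since $G/H$ is finite it is in particular compact and discrete; by the duality between compactness and discreteness recalled in the preliminaries, $H^{\perp} \cong \widehat{G/H}$ is therefore both discrete and compact, hence finite, consistent with the claimed isomorphism. There is no real obstacle here; the only point to be careful about is that the isomorphism $K \cong \widehat{K}$ is not canonical, but the statement of the lemma only asserts the existence of an abstract group isomorphism, so this non-canonicity is harmless.
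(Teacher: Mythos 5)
Your proposal is correct and follows essentially the same route as the paper: both reduce via the identification $H^{\perp} \cong \widehat{G/H}$ to the self-duality of the finite abelian group $G/H$, which the paper simply cites as a known fact and which you additionally verify via the structure theorem and the cyclic case. No issues.
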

\begin{proof} Note that any finite group $G$ is self-dual, that is, $\ghat \cong G$. And so, by
  application of the isomorphism $H^{\perp} \cong \widehat{G/H}$ we find that $  H^{\perp} \cong \widehat{G/H} \cong G/H$.
\end{proof}

We also remind the reader of Weil's formula; it relates integrable functions over $G$ with
integrable functions on the quotient space $G/H$ when $H$ is a closed normal subgroup of $G$. For a
closed subgroup $H$ of $G$ we Let $\pi_H:G\to G/H, \ \pi_H(x) = x+H$ be the \textit{canonical map}
from $G$ onto $G/H$. If $f\in L^1(G)$, then the function $\dot x \mapsto \int_H f(x+h) \, d\mu_{H}(h)$, $\dot
x = \pi_H(x)$ defined almost everywhere on $G/H$, is integrable. Furthermore, when two of the Haar
measures on $G, H$ and $G/H$ are given, then the third can be normalized such that
\begin{equation}
\label{eq:1802a}  \int_G f(x) \, dx = \int_{G/H} \int_H f(x+h) \, d\mu_{H}(h) \, d\mu_{G/H}(\dot x) .
\end{equation}
%
Hence, if two of the measures on $G,H,G/H,\ghat, H^{\perp}$ and $\widehat G / H^{\perp}$ are given, and these two are not dual measures, then by
requiring dual measures and Weil's formula \eqref{eq:1802a}, all other
measures are uniquely determined. To ease notation, we will often write $dh$ in place of $d\mu_H(h)$ and likewise for other
measures.

A \emph{Borel section} or a fundamental domain of a closed subgroup $H$ in $G$ is
a Borel measurable subset $X$ of $G$ which meets each coset $G/H$ once. Any
closed subgroup $H$ in $G$ has a Borel section \cite[Lemma 1.1]{MR0044536};
however, we shall in the following usually only consider Borel sections of discrete
subgroups $H$. We always equip Borel sections of $G$ with the Haar measure $\restrict{\mu_G}{X}$. Assume that $H$ is a discrete subgroup.
It follows that $\mu_{G}(X)$ is finite if, and only if,  $H$ is co-compact, \ie
$H$ is a uniform lattice \cite{BowRos2014}.  From \cite{BowRos2014}, we also
have that the mapping $x \mapsto x + H$ from $(X,\mu_G)$ to $(G/H,\mu_{G/H})$ is
measure-preserving, and the mapping $Q(f) = f'$ defined by
\begin{equation}
  \label{eq:isom-borel}
  f'(x+H) = f(x), \quad x+H \in G/H, x \in X,
\end{equation}
is an isometry from $L^2(X,\mu_G)$ onto $L^2(G/H,\mu_{G/H})$.

For more information on harmonic analysis on locally compact abelian groups, we refer the reader to
the classical books \cite{MR1397028,MR0156915,MR0262773,MR1802924}.

\subsection{Frame theory}
\label{sec:frame-theory}
One of the central concept of this paper is that of a frame. The definition is as follows.
\begin{definition} \label{def:cont-frames} Let $\cH$ be a complex Hilbert space, and
  let $(M,\Sigma_M,\cfm)$ be a measure space, where $\Sigma_M$ denotes the $\sigma$-algebra and
  $\cfm$ the non-negative measure. A family of vectors $\set{f_k}_{k \in M}$ is called a
  \emph{frame} for $\cH$ with respect to $(M,\Sigma_M,\cfm)$ if
  \begin{enumerate}[(a)]
  \item 
    the mapping $M \to \C, k \mapsto \langle f,f_k\rangle $ is
    measurable for all $f \in \cH$, and
  \item there exists constants $A,B>0$ such that
    \begin{equation}
      \label{eq:cont-frame-inequality} 
      A \norm{f}^2 \le \int_M \abs{\innerprod{f}{f_k}}^2 d\cfm(k) 
      \le B \norm{f}^2 \quad \text{for all } f \in \cH. 
    \end{equation}
  \end{enumerate}
The constants $A$ and $B$ are called \emph{frame bounds}.
\end{definition}
If $\set{f_k}_{k\in M}$ is measurable and the upper bound in the above
inequality~(\ref{eq:cont-frame-inequality}) holds, then $\set{f_k}_{k\in M}$ is said to be a
\emph{Bessel} system or family with constant $B$. A frame $\set{f_k}_{k\in M}$ is said to be \emph{tight} if
we can choose $A = B$; if, furthermore, $A = B = 1$, then $\set{f_k}_{k\in M}$ is said to be a
\emph{Parseval frame}.  

If $\cfm$ is the counting measure and $\Sigma_M=2^M$ the discrete
$\sigma$-algebra, we say that $\set{f_k}_{k\in M}$ is a \emph{discrete frame}
whenever (\ref{eq:cont-frame-inequality}) is satisfied; for this measure space,
any family of vectors is obviously measurable. \emph{Because the results of the
  present paper can be formulated for the discrete and continuous setting, we
  shall refer to either cases as \emph{frames} and be more specific when
  necessary}. We mention that in the literature frames and discrete frames are
usually called continuous frames and frames, respectively. The concept of continuous frames was introduced by Kaiser\cite{MR1287849} and Ali, Antoine, and Gazeau~\cite{MR1206084}. For an introduction to frame theory, we refer the reader to \cite{MR2428338}.

To a Bessel family $\set{f_k}_{k\in M}$ for $\cH$, we associate the the \emph{synthesis operator}
$T: L^2(M,\cfm) \to \cH$ defined weakly by
\begin{equation}
T \set{c_k}_{k \in M} = \int_{M} c_k f_k \, \cfm(k).\label{eq:synthesis-op}
\end{equation}
This is a bounded linear operator. Its adjoint operator $T^\ast: \cH \to L^2(M,\cfm)$ is called the
\emph{analysis operator}, and it is given by
\begin{equation}
T^\ast f = \set{\innerprod{f}{f_k}}_{k \in M}.\label{eq:analysis-op}
\end{equation}
The \emph{frame operator} $S : \cH \to \cH$ is then defined as $S=TT^\ast$. We remark that the frame
operator is the unique operator satisfying
\begin{equation}
\innerprod{S f }{g} = \int_{M} \innerprod{f}{f_k}\innerprod{f_k}{g} d\cfm(k) \quad \, \text{for all } f,g \in \cH,
\label{eq:def-frame-op-in-H}
\end{equation}
and that it is well-defined, bounded and self-adjoint for any Bessel system $\set{f_k}_{k \in M}$;
it is invertible if $\set{f_k}_{k \in M}$ is a frame.

In case the frame inequalities (\ref{eq:cont-frame-inequality}) only hold for $f
\in \cK := \overline{\Span} \seq{f_k}_{k \in M} \subset \cH$, we say that
$\seq{f_k}_{k \in M}$ is a \emph{basic frame} or a frame for its closed linear
span. For discrete frames such frames are usually called frame sequences;
we will not adopt this terminology as basic frames need not be sequences. A frame for $\cH$ is clearly a basic frame with $\cK=\cH$. If we
need to stress that a basic frame spans all of $\cH$, we use the terminology
\emph{total frame}. Now, let us briefly comment on the definition of the subspace $\cK$. 

From the Bessel property of a (basic) frame $\seq{f_k}$, we see that:
\[ 
\overline{\im T} = (\ker T^\ast)^\perp = \setprop{f \in
  \cH}{\innerprod{f}{f_k}=0 \ \forall k \in M}^\perp = \overline{\Span}
\seq{f_k}_{k \in M}. 
\] 
The lower frame bound for $f \in \cK$ implies that the operator $\restrict{T^\ast}{\cK}$ is
bounded from below, i.e., $\norm{\restrict{T^\ast}{\cK}f} \ge \sqrt{A}
\norm{f}$, which is equivalent to $\restrict{T^\ast}{\cK}$ being
injective with closed range which, in turn, implies that $T$ has closed
range. Since $\restrict{T^\ast}{\cK}$ is injective, the range of $T$ is dense in
$\cK$. It follows that $\im T=\cK$. 

We will only consider measures $\mu_M$ that are $\sigma$-finite. Assume that $\seq{f_k}$ is measurable. It is known that $T$ as in (\ref{eq:synthesis-op}) defines a bounded linear operator if, and only if,  $\seq{f_k}_{k \in M}$ is a Bessel family~\cite{MR2238038}. Hence, the argument in the preceding paragraph shows that $\seq{f_k}_{k \in M}$ is a basic frame if, and only if,  $T$ as in (\ref{eq:synthesis-op}) defines a bounded linear operator with $\im T = \cK$.

Two Bessel systems $\set{f_k}_{k\in M}$ and $\set{g_k}_{k\in M}$ are said to be \emph{dual frames} for $\cH$
if
\begin{equation}
  \label{eq:cont-dual-weak}
  \innerprod{f}{g} = \int_M \innerprod{f}{g_k} \innerprod{f_k}{g} d\cfm (k) \quad \, \text{for all } f,g \in \cH.
\end{equation}
In this case
\begin{equation}
 f = \int_M \innerprod{f}{g_k}f_k \, d\cfm (k) \quad \, \text{for $f \in \cH$}, \label{eq:frame-rep-weak-sense}
\end{equation}
holds in the weak sense. For discrete frames, equation~(\ref{eq:frame-rep-weak-sense}) holds in the
usual strong sense, \ie with (unconditional) convergence in the $\cH$ norm. Two dual frames are
indeed frames. We also mention that to a given frame for $\cH$ one can always find at least one dual
frame, the so-called canonical dual frame $\{S^{-1}f_k\}_{k \in
  M}$. 

Let us end this section with the definition of a Riesz sequence.
\begin{definition} 
\label{def:riesz-seq}
Let $\{f_k\}_{k=1}^{\infty}$ be a sequence in a Hilbert space $\mathcal H$. If there exists constants $A,B>0$ such that
\[ A \, \sum_{k} \vert c_k \vert^2 \le \Big\Vert  \sum_{k} c_k f_k \, \Big\Vert_{\mathcal H}^2 \le B \, \sum_{k} \vert c_k \vert^2\]
for all finite sequence $\{c_k\}_{k=1}^{\infty}$, then we call $\{f_k\}_{k=1}^{\infty}$ a \emph{Riesz sequence}. If furthermore $\overline{\text{span}} \{f_k\}_{k=1}^{\infty}=\mathcal{H}$, then $\{f_k\}_{k=1}^{\infty}$ is a \emph{Riesz basis}.
\end{definition}

\section{Translation invariant systems}
\label{sec:TI-systems}

Before we focus on Gabor systems, let us first show some results concerning the class of translation
invariant systems, recently introduced in \cite{JakobsenReproducing2014,BowRos2014}, which contains
the class of (semi) co-compact Gabor systems.

We define translation invariant systems as follows.
Let $P$ be a countable or an
uncountable index set, let $g_{p} \in L^2(G)$ for $p \in P$, and let $\LH$ be a closed,
co-compact subgroup in $G$. For a compact abelian group, the group is metrizable if, and only if,
the character group is countable \cite[(24.15)]{MR0156915}. Hence, since $G/\LH$ is
compact and metrizable, the group $\widehat{G/\LH} \cong \LH^{\perp}$ is
discrete and countable. \emph{Unless stated otherwise we equip $\LH^{\perp}$ with the counting
measure and assume a fixed Haar measure $\mu_G$ on $G$.}

The (co-compact) \emph{translation invariant} (TI) system generated by $\{g_{p}\}_{p\in P}$ with
translation along the closed, co-compact subgroup $\LH$ is the family of functions $\tiS$.  We will
use the following standing assumptions on the index set $P$: 
\begin{enumerate}[(I)]
\item $(P,\Sigma_{P},\mu_{P})$ is a $\sigma$-finite measure space, \label{eq:Hypo1}
\item $p \mapsto g_p, (P,\Sigma_{P}) \to (L^2(G),B_{L^2(G)})$ is measurable,
  \label{eq:Hypo2} 
\item $(p,x)\mapsto g_p(x), (P \times G, \Sigma_{P} \otimes B_G) \to (\C, B_\C)$ is
  measurable. \label{eq:Hypo3}
\end{enumerate}
We say that $\{g_{p}\}_{p\in P}$ is \emph{admissible} or, when $g_{p}$
is clear from the context, simply that the measure space $P$ is admissible. The nature of
these assumptions are discussed in
\cite{JakobsenReproducing2014}. Observe that any closed subgroup $P$ of
$G$ (or $\ghat$) with the Haar measure is admissible if $p \to g_p$ is
continuous, e.g., if $g_p=T_p g$ for some function $g\in L^2(G)$. 

If $P$ is countable, we equip it with a weighted counting measure. If the subgroup $\LH$ is also
discrete, hence a uniform lattice, the system $\tiS$ is a \emph{shift} invariant (SI) system.

\subsection{Fiberization}
\label{sec:fiberization}

TI systems are of interest to us since the Gabor systems we shall study are special instances of
these. As the work of Ron and Shen~\cite{MR1350650} and Bownik~\cite{MR1795633} show, certain
Gramian and so-called dual Gramian matrices as well as a fiberization technique play an important
role in the study of TI systems. The fiberization technique is closely related to Zak transform
methods in Gabor analysis, as we will see in Section~\ref{sec:char-equat}.

Let $\Omega \subset \ghat$ be a Borel section of $\LH^\perp$
in $\ghat$ as defined in Section~\ref{sec:LCA}. 
Following \cite{BowRos2014} we define the \emph{fiberization} mapping $\cT : L^2(G) \to
L^2(\Omega,\ell^2(\LH^\perp))$ by
\begin{equation}
\cT f(\omega)=\seqsmall{\hat{f}(\omega\alpha)}_{\alpha \in \LH^\perp}, \quad \omega \in \Omega;
\label{eq:fiber-map}
\end{equation}
the inner product in $L^2(\Omega,\ell^2(\LH^\perp))$ is defined in the obvious manner. Fiberization is
an isometric, isomorphic operation as shown in \cite{MR2578463,BowRos2014}.

Our first result characterizes the frame/Bessel property of TI systems in terms of
fibers. It extends results from \cite{BowRos2014,MR2578463,MR1795633} to the case of
uncountable many generators $\set{g_p}_{p \in P}$.
\begin{theorem}
\label{thm:fibers-characterization}
Let $0<A\le B < \infty$, let $\LH \subset G$ be a closed, co-compact subgroup,
and let $\set{g_p}_{p \in P} \subset L^2(G)$, where $(P,\mu_P)$ is an admissible
measure space. The following assertions are equivalent:
  \begin{enumerate}[(i)]
  \item The family $\tiS$ is a frame for $L^2(G)$ with bounds $A$ and $B$ (or a
    Bessel system with bound $B$), \label{item:1}
  \item For almost every $\omega \in \Omega$, the family $\seq{\cT g_p(\omega)}_{p \in P}$ is a frame for $\ell^2(\LH^\perp)$ with bounds $A$ and $B$ (or a Bessel system with
    bound $B$). \label{item:2}
  \end{enumerate}
\end{theorem}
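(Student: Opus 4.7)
The plan is to reduce both statements to a single fiberwise identity obtained by applying Plancherel twice: first on $G$ to pass to the frequency side, then on $\LH$, using $\Omega$ as a concrete model for $\widehat{\LH}\cong\ghat/\oc{\LH}$. Once the identity is in place, (ii)$\Rightarrow$(i) will follow by integration, while (i)$\Rightarrow$(ii) will require a Lebesgue-differentiation argument over a countable dense subset of $\ell^2(\oc{\LH})$.

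\textbf{Step 1: the fiberization identity.} Fix $f\in L^2(G)$ and $p\in P$. Using $\widehat{T_h g_p}(\omega)=\overline{\omega(h)}\,\hat{g}_p(\omega)$, Plancherel on $G$, and the partition $\ghat=\bigsqcup_{\alpha\in\oc{\LH}}\Omega\alpha$ together with the identity $\alpha(h)=1$ for $\alpha\in\oc{\LH}$, $h\in\LH$, I compute
\[
\innerprod{f}{T_h g_p} = \int_\Omega \omega(h)\, \innerprod{\cT f(\omega)}{\cT g_p(\omega)}_{\ell^2(\oc{\LH})}\, d\mu_{\ghat}(\omega).
\]
Since $\Omega\cong\ghat/\oc{\LH}\cong\widehat{\LH}$ as measure spaces under the dual/Weil normalization from Section~\ref{sec:LCA}, the right-hand side is precisely the Fourier transform on $\widehat{\LH}$ of the map $\omega\mapsto\innerprod{\cT f(\omega)}{\cT g_p(\omega)}$, evaluated at $h\in\LH$. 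Plancherel on $\LH$ followed by a Fubini interchange (justified by admissibility (I)--(III)) yields
\[
\int_P\!\!\int_\LH \abs{\innerprod{f}{T_h g_p}}^2 d\mu_{\LH}(h)\, d\mu_P(p) = \int_\Omega \int_P \abs{\innerprod{\cT f(\omega)}{\cT g_p(\omega)}}^2 d\mu_P(p)\, d\omega.
\]
Combined with the isometry $\norm{f}^2=\int_\Omega\norm{\cT f(\omega)}^2 d\omega$, this identity translates the global bounds in (i) into the \emph{integrated} form of the pointwise bounds in (ii), making (ii)$\Rightarrow$(i) immediate.

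\textbf{Step 2: the hard direction (i)$\Rightarrow$(ii).} This is the main obstacle: I must promote an inequality that holds only after integration in $\omega$ into a pointwise statement at almost every $\omega\in\Omega$ for every $c\in\ell^2(\oc{\LH})$. The strategy is a test-function / Lebesgue-differentiation argument. Fix a countable dense set $\set{c^{(n)}}\subset\ell^2(\oc{\LH})$. For each relatively compact Borel $V\subset\Omega$, the surjectivity of $\cT$ furnishes $f_{n,V}\in L^2(G)$ with $\cT f_{n,V}(\omega)=c^{(n)}\charfct{V}(\omega)$. Inserting $f_{n,V}$ into the integrated identity of Step 1 gives
\[
A\,\mu_{\ghat}(V)\norm{c^{(n)}}^2 \le \int_V \int_P \abs{\innerprod{c^{(n)}}{\cT g_p(\omega)}}^2 d\mu_P(p)\, d\omega \le B\,\mu_{\ghat}(V)\norm{c^{(n)}}^2.
\]
Dividing by $\mu_{\ghat}(V)$ and shrinking $V\downarrow\set{\omega_0}$ — valid by Lebesgue differentiation in the second countable, hence metrizable, group $\ghat$, applied to the non-negative measurable function $\omega\mapsto\int_P\abs{\innerprod{c^{(n)}}{\cT g_p(\omega)}}^2 d\mu_P(p)$ — produces the fiber inequality at $c^{(n)}$ for $\mu_{\ghat}$-a.e.~$\omega_0$. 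Taking the countable intersection over $n$ of these full-measure sets, and using density of $\set{c^{(n)}}$ together with continuity of both sides in $c$, yields the fiberwise frame/Bessel bounds for every $c\in\ell^2(\oc{\LH})$ at almost every $\omega_0\in\Omega$.

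\textbf{Expected difficulties.} The real obstacle lies entirely in Step 2, and specifically in its measurability prerequisites: one must verify that $\omega\mapsto\int_P\abs{\innerprod{c}{\cT g_p(\omega)}}^2 d\mu_P(p)$ is measurable on $\Omega$ (so that Lebesgue differentiation applies), and that the test vectors $f_{n,V}$ can be constructed with the correct measurability in $\omega$. Both facts hinge on the joint-measurability clauses (II)--(III) and on $\cT$ being an isometric isomorphism. The uncountability of the generator index $P$ plays no structural role; it is absorbed throughout by Fubini.
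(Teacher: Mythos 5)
Your Step~1 identity is exactly the ``key computation'' the paper quotes from \cite{BowRos2014}, and your (ii)$\Rightarrow$(i) direction (integrate the fiberwise inequality over $\Omega$ and use that $\cT$ is an isometric isomorphism) coincides with the paper's argument. The divergence is in the hard direction (i)$\Rightarrow$(ii), which the paper simply delegates to Bownik \cite{MR1795633}: the standard argument there is a \emph{contradiction on a positive-measure bad set} --- if (ii) fails, then by density and continuity in $c$ some set $E_{n,k}=\setpropsmall{\omega}{\text{the bound fails by } 1/k \text{ at } c^{(n)}}$ has positive measure, and inserting $f=\cT^{-1}\bigl(c^{(n)}\charfct{V}\bigr)$ for a finite-positive-measure $V\subset E_{n,k}$ into the integrated identity contradicts (i). That version needs only countable additivity. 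Your Lebesgue-differentiation variant reaches the same conclusion but imports an extra hypothesis you do not verify: the Lebesgue differentiation theorem is not an off-the-shelf fact for Haar measure on an arbitrary second countable LCA group (balls need not be doubling, and a differentiation basis has to be produced, e.g.\ via the structure theorem $\ghat\cong\R^n\times G_0$ with $G_0$ containing a compact open subgroup, and then transported to the Borel section $\Omega$). So your proof is essentially correct and structurally the same as the paper's, but the one step where you innovate is also the one step that is more delicate than necessary; replacing the shrinking-neighborhood limit by the positive-measure contradiction closes that loose end at no cost. Your closing remarks on measurability are on target: joint measurability (II)--(III) is what makes $\omega\mapsto\int_P\abs{\innerprods{c}{\cT g_p(\omega)}}^2 d\mu_P(p)$ measurable and licenses the Fubini interchange, and this is precisely what the admissibility hypotheses are for.
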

\begin{proof}
 The proof follows from the proofs in
  \cite{BowRos2014,MR2578463,MR1795633}. Indeed, the key computation in \cite{BowRos2014}
  shows that
\[
\int_P \int_H \abs{\innerprod[L^2]{f}{T_hg_p}}^2 d\mu_H(h) d\mu_P(p) = \int_P \int_\Omega
\abs{\innerprod[\ell^2]{\cT f(\omega)}{\cT g_p(\omega)}}^2 d\mu_{\ghat}(\omega) d\mu_P(p)
\]
for all $f \in L^2(G)$. Let us outline the argument for the frame
case; the Bessel case is similar. Assume that (ii) holds. Then for a.e.\ $\omega \in
\Omega$ we have 
\[ A \norm[\ell^2]{a} \le \int_P \abs{\innerprod[\ell^2]{a}{\cT
    g_p(\omega)}}^2 d\mu_P(p) \le B \norm[\ell^2]{a} \quad \text{for
  all } a \in \ell^2(H^\perp). \]
If we integrate these inequalities over $\Omega$ and use that $\cT$ is
an isometric isomorphism, we arrive at (i) using the key computation above. The other implication
follows as in \cite{MR1795633}. 
\end{proof}

 \begin{remark}
   Theorem~\ref{thm:fibers-characterization} can also be formulated
   for basic frames using the notion of range functions. A very
   general version of this result was obtained independently and
   concurrently in \cite{Iverson2014}. Theorem~\ref{thm:fibers-characterization}
   is closely related to the theory of translation invariant subspaces which 
   very recently has been studied in 
\cite{BarbieriHernPater2014,Iverson2014} using Zak transform methods (cf.\ Section~\ref{sec:char-equat}). 
 \end{remark}

Theorem~\ref{thm:fibers-characterization} shows that the task of verifying that
a given TI system $\tiS$ is a frame for $L^2(G)$ can be replaced by the simpler
task of proving that the fibers $\seq{\cT g_p(\omega)}_{p \in P}$ are a frame for
the discrete space $\ell^2(\LH^\perp)$, however, this needs to be done for every
$\omega \in \Omega$. For a uniform lattice $H$, the Borel section $\Omega$ of $H^\perp$ is
compact, but for non-discrete, co-compact closed subgroups $H$, this is not the
case, in fact, $m_{\ghat}(\Omega)=\infty$.

Let $\omega \in \Omega$ be given. The analysis operator
$L_\omega:\ell^2(\LH^\perp)\to L^2(P)$ for the family of fibers $\seq{\cT
  g_p(\omega)}_{p \in P}$ in $\ell^2(\LH^\perp)$ is given by:
\begin{equation}
  \label{eq:analysis-op-fibers}
  L_{\omega} c = p \mapsto \innerprod[\ell^2(\LH^\perp)]{c}{\cT g_p(\omega)}, \quad D(L_\omega)=c_{00}(\LH^\perp). 
\end{equation}
Note that we have only defined the analysis operator $L_\omega$ for finite
sequences since we do not, a priori, assume that the family of fibers is a Bessel
system, cf.\ (\ref{eq:analysis-op}). If $L_\omega$ is bounded, it extends to a
bounded, linear operator on all of $\ell^2(\LH^\perp)$; clearly, $L_\omega$ is
bounded with bound $\norm{L_\omega} \le \sqrt{B}$ if, and only if,  $\seq{\cT
  g_p(\omega)}_{p \in P}$ is a Bessel system with bound $B$. In this case the
adjoint is the synthesis operator $L_\omega^\ast: L^2(P)\to\ell^2(\LH^\perp)$
given by:
\[
L_\omega^\ast f = \seq{\int_P f(p)\, \hat g_p(\omega\alpha) d \mu_P(p)}_{\alpha \in \LH^\perp}, \qquad \text{where } f \in L^2(P). 
\]
From results in \cite[Chapter~3]{MR1946982} and \cite{MR2238038} we know that
this synthesis operator $L_\omega^\ast: L^2(P)\to\ell^2(\LH^\perp)$ is a
well-defined, bounded linear operator if, and only if,  the fibers $\seq{\cT
  g_p(\omega)}_{p \in P}$ is a Bessel system. The frame operator $L_\omega^\ast
L_\omega$ of the family of fibers is called the \emph{dual Gramian} and is denoted by
$\tilde{\cG}_\omega:\ell^2(\LH^\perp) \to \ell^2(\LH^\perp)$. Again, using
results from \cite[Chapter 3]{MR1946982}, the frame operator is a bounded,
linear operator acting on all of $\ell^2(\LH^\perp)$ precisely when the fibers
form a Bessel system. Paying attention to the operator bounds and Bessel
constants, we therefore have the following result, extending results from \cite{MR2578463,BowRos2014} to the case of uncountably many generators.
\begin{proposition}
\label{thm:TI-gramian-pre-bessel-equi}
Let $B>0$, let $\LH \subset G$ be a closed, co-compact subgroup, and let
$\set{g_p}_{p \in P} \subset L^2(G)$, where $(P,\mu_P)$ is an admissible measure
space. The following assertions are equivalent:
  \begin{enumerate}[(i)]
  \item $\tiS$ is a Bessel system with bound $B$,
  \item $\esssup_{\omega \in \Omega} \normsmall{\tilde{\cG}_\omega} \le B$,\label{item:3}
  \item $\esssup_{\omega \in \Omega} \norm{L_\omega} \le \sqrt{B}$.\label{item:4}
  \end{enumerate}
\end{proposition}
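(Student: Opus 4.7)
The plan is to establish (i)$\Leftrightarrow$(iii) by specializing Theorem~\ref{thm:fibers-characterization} to the Bessel case, and (ii)$\Leftrightarrow$(iii) by the operator-theoretic identity $\tilde{\cG}_\omega = L_\omega^\ast L_\omega$. Almost all the necessary pieces have already been recorded in the text preceding the statement; the proof is really an assembly job.

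For (i)$\Leftrightarrow$(iii), I would invoke the Bessel version of Theorem~\ref{thm:fibers-characterization}: $\tiS$ is Bessel with bound $B$ if and only if, for a.e.\ $\omega \in \Omega$, the fiber family $\seq{\cT g_p(\omega)}_{p \in P}$ is Bessel in $\ell^2(\LH^\perp)$ with bound $B$. By definition of the analysis operator and the standard fact that a family of vectors in a Hilbert space is Bessel with bound $B$ precisely when its analysis operator on the dense subspace $c_{00}(\LH^\perp)$ extends to a bounded operator of norm at most $\sqrt{B}$, the condition on the fibers is equivalent to $\norm{L_\omega}\le \sqrt{B}$ for a.e.\ $\omega$. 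Since the essential supremum is by definition the smallest bound that holds almost everywhere, this is precisely (iii).

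For (iii)$\Leftrightarrow$(ii), fix $\omega \in \Omega$ for which $L_\omega$ extends boundedly to $\ell^2(\LH^\perp)$. Then $L_\omega^\ast$ is given by the formula written out just before the proposition, and $\tilde{\cG}_\omega = L_\omega^\ast L_\omega$ by the definition of the dual Gramian as the frame operator of the fiber family. The operator-norm identity $\normsmall{L_\omega^\ast L_\omega}=\norm{L_\omega}^2$ then yields $\normsmall{\tilde{\cG}_\omega}=\norm{L_\omega}^2$ pointwise a.e. Taking essential suprema and a square root shows that (ii) and (iii) are equivalent, with the convention that if $\norm{L_\omega}=\infty$ on a set of positive measure, then both essential suprema are infinite and both fail.

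The only nontrivial technical point — and the step I would treat with the most care — is the measurability of $\omega \mapsto \norm{L_\omega}$ and $\omega \mapsto \normsmall{\tilde{\cG}_\omega}$, so that the essential suprema are well defined; this follows from the measurability assumption on $p\mapsto g_p$ and the fact that operator norms on separable Hilbert spaces can be computed as suprema over countable dense subsets of the unit ball, and is implicit in the treatment of \cite{MR2578463,BowRos2014}, whose arguments carry over verbatim to our slightly more general admissible index set~$P$.
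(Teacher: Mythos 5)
Your proof is correct and takes essentially the same route as the paper, which states the proposition as a direct consequence of the preceding discussion: Theorem~\ref{thm:fibers-characterization} (Bessel version) gives (i)$\Leftrightarrow$(iii), and the identity $\tilde{\cG}_\omega = L_\omega^\ast L_\omega$ together with $\normsmall{L_\omega^\ast L_\omega}=\norm{L_\omega}^2$ gives (ii)$\Leftrightarrow$(iii). Your explicit attention to the measurability of $\omega\mapsto\norm{L_\omega}$ is a point the paper leaves implicit (deferring to \cite{MR2578463,BowRos2014}), but it does not change the argument.
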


In a similar fashion, it is possible to generalize \cite[Proposition
4.9(2)]{MR2578463} and the corresponding result in \cite{BowRos2014} to the case of uncountably many generators.

\section{Semi co-compact Gabor systems and characterizations}
\label{sec:semi-co-compact}

In the the rest of this article we will concentrate on Gabor systems. A \emph{Gabor system} in
$L^2(G)$ with generator $g\in L^2(G)$ is a family of functions of the form
\[
\set{E_{\gamma} T_{\lambda} g}_{\gamma \in \LG, \lambda \in \LL}, \text{where
  $\LG\subseteq \ghat$ and $\LL\subseteq G$}.
\] 
We will usually assume that at least one of the subsets $\LG\subset \ghat$ or $\LL\subset G$ is a
closed subgroup; if either of these subsets is not a closed subgroup, it will be assumed to be, at
least, admissible as an
index set (cf. the previous section). 
We often use that semi co-compact Gabor systems are unitarily equivalent to co-compact translation
 invariant systems in either time or in frequency domain. If both $\Gamma$ and $\Lambda$ are closed and co-compact
subgroups, we say that $\set{E_{\gamma} T_{\lambda} g}_{\gamma \in \LG, \lambda \in \LL}$ is a
\emph{co-compact Gabor system}; if only one of the sets $\Gamma$ and $\Lambda$ is a closed and
co-compact subgroup, we name the Gabor system \emph{semi co-compact}. If both $\Gamma$ and $\Lambda$
are discrete and co-compact, we recover the well-known uniform lattice
Gabor systems. 




\subsection{Characterizations of Gabor frames and the Zak transform} 
\label{sec:char-equat}

The fiberization technique from
Theorem~\ref{thm:fibers-characterization} will play a crucial
role in the characterizations of semi co-compact Gabor frames, presented in this subsection. From
Theorem~\ref{thm:fibers-characterization} for the TI system $\{
T_{\gamma} \cF^{-1} T_{\lambda} g\}_{\gamma \in \LG, \lambda \in
  \LL}$, which is unitarily equivalent with $\set{E_{\gamma} T_{\lambda} g}_{\gamma \in \LG, \lambda \in \LL}$, we immediately have a characterization of the frame property of Gabor
systems. 

\begin{proposition}
\label{thm:char-frame-prop-LG-Gabor}
Let $g \in L^2(G)$, and let $0<A\le B < \infty$. Let $\Gamma$ be a closed, co-compact subgroup of
$\ghat$, and let
$(\Lambda,\Sigma_\Lambda,\mu_\Lambda)$ be an admissible measure space in $G$. The following assertions are
equivalent:
\begin{enumerate}[(i)]
\item $\set{E_{\gamma} T_{\lambda} g}_{\gamma \in \LG, \lambda \in \LL}$ is a frame for $L^2(G)$
  with bounds $A$ and $B$, \label{item:6}
\item $\seq{\seq{g(x+\lambda+\alpha)}_{\alpha \in \LG^\perp}}_{\lambda \in \Lambda}$ is a frame for
  $\ell^2(\LG^\perp)$ with bounds $A$ and $B$ for a.e.\ $x \in X$, where $X$ is a Borel
  section of $\LG^\perp$ in $\ghat$.
\label{item:7}
\end{enumerate}
\end{proposition}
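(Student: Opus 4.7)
The plan is to reduce the statement to Theorem~\ref{thm:fibers-characterization} via the Fourier transform on $G$. First, using the isometric isomorphism $\cF \colon L^2(G) \to L^2(\ghat)$ and the commutation relations $\cF E_\gamma = T_\gamma \cF$ and $\cF T_\lambda = E_{\lambda^{-1}} \cF$, I would compute
\[
\cF(E_\gamma T_\lambda g) = T_\gamma E_{\lambda^{-1}}\hat g,
\]
where on the right $T_\gamma$ denotes translation on $\ghat$ and $E_{\lambda^{-1}}$ denotes modulation on $\ghat$ by the character $\lambda^{-1} \in G \cong \ghhat$. Setting $h_\lambda \defined E_{\lambda^{-1}} \hat g$, this identifies $\gaborset$ unitarily with the translation invariant system $\set{T_\gamma h_\lambda}_{\gamma \in \LG,\,\lambda \in \LL}$ on the LCA group $\ghat$, with translations along the closed, co-compact subgroup $\LG \subset \ghat$. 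Admissibility of $\set{h_\lambda}_{\lambda \in \LL}$ on $L^2(\ghat)$ follows from admissibility of $\set{T_\lambda g}_{\lambda \in \LL}$ via the Fourier isomorphism.

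Next, I would invoke Theorem~\ref{thm:fibers-characterization} with $\ghat$ playing the role of the ambient group and $\LG$ the closed co-compact subgroup. The annihilator $\LG^\perp$ then lies inside $\ghhat \cong G$ as a discrete countable subgroup, and a Borel section $X \subset G$ of $\LG^\perp$ in $\ghhat$ coincides with the $X$ in the statement. The fiberization map becomes
\[
\cT h_\lambda(x) = \set{\cF_{\ghat} h_\lambda(x + \alpha)}_{\alpha \in \LG^\perp}, \quad x \in X,
\]
where $\cF_{\ghat} \colon L^2(\ghat) \to L^2(\ghhat) \cong L^2(G)$ is the Fourier transform on $\ghat$, and Theorem~\ref{thm:fibers-characterization} equates the frame property of the TI system with bounds $A,B$ with the condition that $\set{\cT h_\lambda(x)}_{\lambda \in \LL}$ is a frame for $\ell^2(\LG^\perp)$ with bounds $A,B$ for a.e.\ $x \in X$.

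Finally, I would identify the fibers explicitly. Applying $\cF_{\ghat} E_{\lambda^{-1}} = T_{\lambda^{-1}} \cF_{\ghat}$ on $L^2(\ghat)$ (where $T_{\lambda^{-1}}$ acts on $L^2(\ghhat)\cong L^2(G)$ as translation by $-\lambda$ in additive notation) together with the Pontryagin inversion $\cF_{\ghat}\hat g = g$ yields $\cF_{\ghat} h_\lambda(y) = g(y + \lambda)$, so
\[
\cT h_\lambda(x) = \set{g(x + \alpha + \lambda)}_{\alpha \in \LG^\perp},
\]
which is precisely the family in (ii). Combining the three steps gives the claimed equivalence. The main obstacle I anticipate is the Pontryagin duality bookkeeping: consistently tracking which of the operators $T$, $E$, $\cF$ acts on $G$ versus on $\ghat$, and aligning the normalization conventions so the fibers emerge in the clean additive form $\set{g(x + \lambda + \alpha)}$ without reflections or spurious character inverses.
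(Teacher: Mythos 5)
Your proposal is correct and is exactly the paper's argument: the paper proves this proposition in one line by observing that $\gaborset$ is unitarily equivalent under the Fourier transform to the co-compact TI system $\set{T_\gamma \cF T_\lambda g}_{\gamma\in\Gamma,\lambda\in\Lambda}$ in $L^2(\ghat)$, with translations along the closed, co-compact subgroup $\Gamma\subset\ghat$, and then applying Theorem~\ref{thm:fibers-characterization}, which is precisely your three steps. The only bookkeeping point is the one you yourself flag: with the paper's convention for the forward transform on $\ghat$ one gets $\cF_{\ghat}\hat g = g(-\,\cdot\,)$ rather than $g$, but the resulting reflection of $x$ (which sends one Borel section of $\LG^\perp$ to another) and of the index $\alpha\in\LG^\perp$ (a unitary relabelling of $\ell^2(\LG^\perp)$) does not affect the a.e.\ frame condition in (ii), so your conclusion stands.
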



We will apply Theorem~\ref{thm:fibers-characterization} once more to
Proposition~\ref{thm:char-frame-prop-LG-Gabor} under stronger assumptions on
$\LL$. In the following we will always assume that $\LL$ is a closed subgroup of
$G$. For a moment, let us even assume that $\LL=\LG^\perp$, where $\LG$ is a
closed, co-compact subgroup of $\ghat$. Note that this implies that $\LL$ is
discrete and countable. For uniform lattice Gabor systems the condition
$\LL=\LG^\perp$ is called \emph{critical density} by Gr\"o{}chenig
\cite{MR1601095} since Borel sections $X$ and $\Omega$ of the lattices
$\LG^\perp$ and $\LL^\perp$ in this case satisfy $m_G(X) m_{\ghat}(\Omega) =
1$. Theorem 6.5.2 in \cite{MR1601095} states that the uniform lattice Gabor
system $\gaborset$ only can be frame for $L^2(G)$ if $m_G(X) m_{\ghat}(\Omega)
\le 1$. Clearly this is not a necessary condition when either $\LL$ or $\LG$ is
non-discrete since, for closed, co-compact subgroups, a Borel section of its
annihilator has finite measure if and only if the subgroup itself is discrete.

Now, back to the assumption $\LL=\LG^\perp$ with $\LG$ being a (not necessarily
discrete) closed, co-compact subgroup of $\ghat$. In this case, the system in
Proposition~\ref{thm:char-frame-prop-LG-Gabor}\eqref{item:7} is a shift invariant system of the form 
$\seq{T_\lambda \varphi_x}_{\lambda \in \Lambda}$ in $\ell^2(\LL)$ with countably many generators 
$\varphi_x:=\seq{g(x+\alpha)}_{\alpha \in \LL}$.  We now apply the fiberization
techniques from Section~\ref{sec:fiberization} with $G=\LL$ and $\LH=\LL$. Since
the annihilator $\LH^\perp$ in this case is $\anni(\widehat{\LL},\LL)=\{1\}$,
the fiberization map~(\ref{eq:fiber-map}) is simply $\cT
f(\omega)=\{\hat{f}(\omega)\}$ for $\omega \in \Omega$, where $\Omega$ is a
Borel section of $\{1\}$ in $\widehat{\LL}$, hence, $\Omega=\widehat{\LL}$. The
Fourier transform of the generator $\varphi_x \in \ell^2(\LL)$ is
\begin{equation}
\label{eq:def-phi_x}
\hat{\varphi}_x(\omega)=\sum_{\alpha \in \LL}
g(x+\alpha)\overline{\omega(\alpha)},
\end{equation}
which is the \emph{Zak transform} $Z_\LL g(x,\omega)$ of $g$ with respect to the
discrete group $\LL\subset G$. 

By  Theorem~\ref{thm:fibers-characterization} (or a result in
\cite{BowRos2014}, to be more precise), $\seq{T_\lambda
  \varphi_x}_{\lambda \in \Lambda}$ is a basic frame in $\ell^2(\LL)$
with bounds $A$ and $B$ if, and only if, $\{\hat{\varphi}_x(\omega)\}$
is a basic frame in $\ell^2(\anni(\widehat{\LL},\LL))\cong\C$ with
bounds $A,B$ for almost all $\omega \in \widehat{\LL}$. Now, a scalar
$\{\hat{\varphi}_x(\omega)\}$ is a basic frame in $\C$ with bounds $A$
and $B$ if, and only if, its norm squared, whenever non-zero, is
bounded between $A$ and $B$.  We conclude that $\set{E_{\gamma}
  T_{\lambda} g}_{\gamma \in \LL^\perp, \lambda \in \LL}$ is a Gabor
basic frame in $L^2(G)$ with bound $A$ and $B$ if, and only if,
\begin{equation}
\label{eq:1D-Zak-cond}
A \le \absBig{\sum_{\alpha \in \LL} g(x+\alpha)\overline{\omega(\alpha)}\,}^2 \le B
\quad \text{for a.e. } x \in X, \omega \in \Omega= \widehat{\LL} \text{ for which } \hat{\varphi}_x(\omega) \neq 0. 
\end{equation}
In particular, whenever $\LL=\LG^\perp$ with $\LG$ being a closed, co-compact
subgroup, we see that $\set{E_{\gamma} T_{\lambda} g}_{\gamma \in \LL^\perp,
  \lambda \in \LL}$ is a total Gabor frame for all of $L^2(G)$ if, and only if, $A \le
\abssmall{Z_\LL g(x,\omega)}^2 \le B$ for almost any $x \in X, \omega \in
\Omega= \widehat{\LL}$. Still assuming $\LG=\LL^\perp$, this result can be shown
to hold for any closed subgroup $\LL \subset G$ \cite[Theorem 2.6]{MR3019110}. However,
the next result shows a non-existence phenomenon of such continuous Gabor
frames.

\begin{theorem}
  \label{thm:non-existence-cont-gabor-critical-sampling}
Let $g \in L^2(G)$, let $0<A\le B < \infty$, and let $\LL$ be a closed subgroup of $G$. Suppose that $\LL$ is either discrete or co-compact. Then the following assertions are equivalent:
\begin{enumerate}[(i)]
\item $\set{E_{\gamma} T_{\lambda} g}_{\gamma \in \LL^\perp, \lambda \in \LL}$ is a frame for $L^2(G)$
  with bounds $A$ and $B$,
\item The subgroup $\LL$ is discrete and co-compact, hence a uniform lattice, and $\set{E_{\gamma} T_{\lambda} g}_{\gamma \in \LL^\perp, \lambda \in \LL}$ is a Riesz basis for $L^2(G)$
  with bounds $A$ and $B$.
\end{enumerate}
\end{theorem}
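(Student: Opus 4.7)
The implication (ii) $\Rightarrow$ (i) is immediate, since every Riesz basis with bounds $A,B$ is a frame with the same bounds. The content lies in (i) $\Rightarrow$ (ii), which I would split into two stages.

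\emph{Stage 1: $\LL$ must be a uniform lattice.} I rely on the Zak transform characterization derived just before the theorem: by the cited \cite[Thm.~2.6]{MR3019110} it extends to any closed subgroup $\LL$, and states that $\set{E_\gamma T_\lambda g}_{\gamma\in \LL^\perp,\lambda\in\LL}$ is a frame with bounds $A,B$ if, and only if,
\[
A\le \abs{Z_\LL g(x,\omega)}^2 \le B \quad \text{for a.e.\ } (x,\omega)\in X\times \widehat{\LL},
\]
where $X\subset G$ is a Borel section of $\LL$ and $Z_\LL:L^2(G)\to L^2(X\times \widehat{\LL})$ is an isometry under the dual measure conventions of Section~\ref{sec:LCA}. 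Suppose first $\LL$ is closed and co-compact, so $\mu_G(X)<\infty$. If $\LL$ were not discrete, then $\widehat{\LL}$ would be a non-compact LCA group, giving $\mu_{\widehat{\LL}}(\widehat{\LL})=\infty$; combining the Plancherel identity $\|g\|^2 = \int_{X\times\widehat{\LL}}\abs{Z_\LL g}^2$ with the pointwise lower bound would force $\|g\|^2 \ge A\,\mu_G(X)\,\mu_{\widehat{\LL}}(\widehat{\LL})=\infty$, contradicting $g\in L^2(G)$. Hence $\LL$ is discrete, hence a uniform lattice. The alternative hypothesis (that $\LL$ is discrete) is reduced to the previous case by Fourier transform: $\cF$ is unitary and, using $\gamma(\lambda)=1$ for $\gamma\in\LL^\perp,\lambda\in\LL$, maps $\set{E_\gamma T_\lambda g}_{\gamma\in\LL^\perp,\lambda\in\LL}$ to a Gabor system of the same critical-density shape in $L^2(\ghat)$ with $\LL$ replaced by the now closed, co-compact subgroup $\LL^\perp$; the preceding paragraph then forces $\LL^\perp$ to be discrete, i.e.\ $\LL$ to be co-compact. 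In both cases $\LL$ is a uniform lattice.

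\emph{Stage 2: the frame is a Riesz basis with the same bounds.} With $\LL$ a uniform lattice, $Z_\LL$ is an isometric isomorphism from $L^2(G)$ onto $L^2(X\times \widehat{\LL})$. A direct calculation using $\gamma(\alpha)=1$ for $\gamma\in\LL^\perp$, $\alpha\in\LL$, together with the standard quasi-periodicity of the Zak transform, yields the intertwining relation $Z_\LL(E_\gamma T_\lambda g)(x,\omega) = \gamma(x)\overline{\omega(\lambda)}\, Z_\LL g(x,\omega)$. Hence, for any finite sequence $\set{c_{\gamma,\lambda}}$,
\[
\Big\|\sum_{\gamma,\lambda} c_{\gamma,\lambda}\, E_\gamma T_\lambda g\Big\|^2 = \int_{X\times \widehat{\LL}} \Big|\sum_{\gamma,\lambda} c_{\gamma,\lambda}\,\gamma(x)\overline{\omega(\lambda)}\Big|^2 \abs{Z_\LL g(x,\omega)}^2 \, d\mu_G(x)\, d\mu_{\widehat{\LL}}(\omega).
\]
With dual Haar measures, the characters $\set{(x,\omega)\mapsto \gamma(x)\overline{\omega(\lambda)}}_{\gamma\in\LL^\perp,\lambda\in\LL}$ form an orthonormal basis of $L^2(X\times\widehat{\LL})$, via $\LL^\perp\cong \widehat{G/\LL}$ on the compact group $G/\LL$ and $\LL\cong \widehat{\widehat{\LL}}$ on the compact group $\widehat{\LL}$. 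Inserting the pointwise bound $A\le \abs{Z_\LL g}^2 \le B$ then delivers the Riesz-sequence inequalities $A\sum \abs{c_{\gamma,\lambda}}^2 \le \|\sum c_{\gamma,\lambda} E_\gamma T_\lambda g\|^2 \le B\sum\abs{c_{\gamma,\lambda}}^2$. Being simultaneously a frame for $L^2(G)$ (hence with dense span) and a Riesz sequence, the family is a Riesz basis with bounds $A,B$, as claimed.

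The main obstacle is the careful bookkeeping of Haar measure normalizations, so that the Plancherel identity for $Z_\LL$ and the orthonormality of the character systems on $X$ and $\widehat{\LL}$ produce exactly the same constants $A,B$ in the two directions; once the dual measure conventions from Section~\ref{sec:LCA} are invoked consistently, these cancel as required.
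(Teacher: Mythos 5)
Your proposal is correct in outline but follows a genuinely different route from the paper's. The paper never leaves its own fiberization machinery: in the discrete case it reduces assertion (i) to the scalar Zak condition, observes that a one-element frame for $\C$ is automatically a Riesz basis with the same bounds, and then runs the fiberization argument backwards through Riesz-sequence variants (via \cite{MR2578463} and its generalization in \cite{BowRos2014}), so that the decisive step is the Bownik--Ross theorem that \emph{continuous} Riesz sequences do not exist --- this is what forces $\LL^\perp$ to be discrete; the co-compact case is then handled by exchanging the roles of translation and modulation. You instead extract discreteness from an infinite-measure argument on the Zak domain (Stage 1) and obtain the Riesz inequalities directly from the diagonalization $Z_\LL(E_\gamma T_\lambda g)=\gamma(x)\overline{\omega(\lambda)}\,Z_\LL g$ together with Parseval for the product characters (Stage 2). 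Your route is more self-contained on the Riesz side --- it does not need the non-existence of continuous Riesz sequences --- at the price of invoking the general-closed-subgroup Zak characterization of \cite{MR3019110} rather than only the version the paper derives itself.

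Two points need repair or care. First, in Stage 1 you write ``$\LL$ co-compact, so $\mu_G(X)<\infty$'': for a \emph{non-discrete} subgroup a Borel section typically satisfies $\mu_G(X)=0$ (e.g.\ $\LL=\R\times\set{0}$ in $\R^2$), so your product $\mu_G(X)\,\mu_{\widehat{\LL}}(\widehat{\LL})$ is of the indeterminate form $0\cdot\infty$. The argument goes through once you work on the measure space on which $Z_\LL$ is actually unitary, namely $G/\LL\times\ghat/\LL^\perp$: there $\mu_{G/\LL}(G/\LL)$ is positive and finite by compactness, while $\mu_{\ghat/\LL^\perp}(\ghat/\LL^\perp)=\infty$ when $\LL$ is non-discrete, and the contradiction $\norm{g}^2=\infty$ follows. (Alternatively, handle the \emph{discrete} case first --- there $\mu_G(X)=\infty$ unless $\LL$ is co-compact, by the criterion recalled in Section~\ref{sec:LCA} --- and reduce the co-compact case to it by Fourier transform; this avoids both the external citation and the zero-measure section.) Second, in Stage 2 the constants $A,B$ come out exactly only if the product characters are ortho\emph{normal}, which is a normalization statement about $\mu_G(X)$ and $\mu_{\widehat{\LL}}(\widehat{\LL})$ that you defer; it does hold under the dual-measure and Weil-formula conventions the paper fixes, but it is the one place where the bookkeeping you flag actually has to be carried out.
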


\begin{proof}
  The implication (ii)$\Rightarrow$(i) is trivial so we only have to consider
  (i)$\Rightarrow$(ii). 

  Assume first that the subgroup $\LL$ is discrete. Then $\LG=\LL^\perp$ is co-compact. We use the
  notation from the paragraphs preceding
  Theorem~\ref{thm:non-existence-cont-gabor-critical-sampling}. Then, as shown above, assertion (i)
  is equivalent to $\{\hat{\varphi}_x(\omega)\}$ being a frame for $\C$ for almost every $x \in X$,
  $\omega \in \widehat{\LL}$. However, a one element set is a frame if, and only if, it is a Riesz
  basis with the same bounds. Now, we repeat the argument above, but in the reverse direction using
  a Riesz sequence variant of Theorem~\ref{thm:fibers-characterization}. By \cite[Theorem
  4.3]{MR2578463} the scalar $\{\hat{\varphi}_x(\omega)\}$ is a Riesz basis for $\C$ if, and only
  if, the SI system $\seq{T_\lambda \varphi_x}_{\lambda \in \Lambda}$ is a Riesz basis in
  $\ell^2(\LL)$ with the same bounds. By a result in \cite{BowRos2014}, which generalizes
  \cite[Theorem 4.3]{MR2578463}, this is equivalent to $\{ T_{\gamma} \cF^{-1} T_{\lambda} g
  \}_{\gamma \in \LL^\perp, \lambda \in \LL}$ being a so-called \emph{continuous} Riesz
  basis. However, as shown in \cite{BowRos2014} \emph{continuous} Riesz sequences only exist if
  $\LL^\perp$ is discrete. Hence, $\{ T_{\gamma} \cF^{-1} T_{\lambda} g \}_{\gamma \in \LL^\perp,
    \lambda \in \LL}$ is actually a (discrete) Riesz basis. By unitarily equivalence, this implies
  that $\set{E_{\gamma} T_{\lambda} g}_{\gamma \in \LL^\perp, \lambda \in \LL}$ is a Riesz basis.

  Assume now that $\LL$ is co-compact. Then $\LG=\LL^\perp$ is discrete. Note
  that $\set{T_\lambda E_\gamma g}_{\gamma \in \LL^\perp, \lambda \in \LL}$ is
  unitarily equivalent to $\set{E_\gamma T_\lambda g}_{\gamma \in \LL^\perp,
    \lambda \in \LL}$ and repeat the argument above for the co-compact TI system
  $\set{T_\lambda E_\gamma g}_{\gamma \in \LL^\perp, \lambda \in \LL}$
\end{proof}

\begin{remark}
  In the extreme case $\LL=G$,
  Theorem~\ref{thm:non-existence-cont-gabor-critical-sampling} tell us
  that $\set{T_\lambda g}_{\lambda \in G}$ cannot be a frame for
  $L^2(G)$ \emph{unless} $G$ is discrete; if $G$ is discrete, then $\ghat$ is compact, and any $g
  \in L^2(G)$ with $0<A \le \abs{\hat{g}(\omega)}^2 \le B$ for a.e. $\omega \in
  \ghat$ will generate a frame $\set{T_\lambda g}_{\lambda \in G}$
  with bounds $A,B$. For discrete (irregular) Gabor systems in
  $L^2(\R^n)$ such questions are studied in \cite{MR1721808}. On the
  other hand, totality in $L^2(G)$ of the set $\set{T_\lambda g}_{\lambda \in G}$ is achievable for both discrete and non-discrete LCA groups $G$; \eg take any $g
  \in L^2(G)$ with $\hat{g}(\omega) \neq 0$ for a.e. $\omega \in
  \ghat$.
\end{remark}

Due to Theorem~\ref{thm:non-existence-cont-gabor-critical-sampling} we wish to
relax the ``critical'' density condition $\LL=\LG^\perp$, but in such a way that we still can apply Zak transform methods.
For regular Gabor systems 
\begin{equation}
\label{eq:lattice-Gabor-system-Rd}
\setprop{\myexp{2\pi i \gamma x }g(x-\lambda)}{\gamma \in
  \LG=A\Z^n,\lambda \in \LL=B\Z^n}
\end{equation}
in $L^2(\R^n)$ with $A,B \in \text{GL}_n(\R)$ rational density, where $A\Z^n \cap
B\Z^n$ is a full-rank lattice, is such a relaxation; for $n=1$ rational density
simply means $AB=\frac{p}{q} \in \Q$.
Our assumptions on the subgroups $\LL$ and $\LG$ in the remainder of
this section will mimic the setup of
rational density, and the characterization will depend on a vector-valued Zak
transform similar to the case of $L^2(\R^n)$ \cite{MR2294481,MR1460623,MR1448221}.

For a closed subgroup $H$ of $G$ the Zak transform $Z_H$ as introduced by Weil, albeit not under
this name, of a continuous function $f \in C_c(G)$ is:
\[ 
Z_H f(x,\omega) = \int_H
f(x+h)\overline{\omega(h)} \, dh \qquad \text{for a.e. } x \in X,\, \omega \in \widehat{H}.
\]
The Zak transform extends to a unitary operator from $L^2(G)$ onto $L^2(G/H \times \ghat/H^\perp)$
\cite{MR0165033,MR3019110}. We will use the Zak transform for discrete subgroups $H=\LG^\perp$,
where $\LG$ is co-compact, in which case, the convergence of the series $Z_H f(x,\alpha) =
\sum_{\alpha \in \LG^\perp} f(x+\alpha)\overline{\omega(\alpha)}$ is in the $L^2$-norm for a.e. $x$
and $\omega$. 

 The next result shows that the frame property of a Gabor system $\gaborset$ in $L^2(G)$ under certain assumptions of $\LL$ and $\LG$ is equivalent with the frame property of a family of associated Zak transformed variants of the Gabor system in $\C^p$. 

\begin{theorem}
  \label{thm:char-vector-valued-Zak-time}
 Let $g \in L^2(G)$, and let $0<A\le B < \infty$. Let $\Gamma$ be a closed, co-compact subgroup of
$\ghat$. Suppose that $\Lambda$ is a closed subgroup of $G$ 
such that $p:=\card{\Gamma^\perp / (\LL \cap \LG^\perp)}<\infty$. 
Let $\set{\chi_1,\dots, \chi_p}:=\anni(\widehat{\LG^\perp},\LL \cap
    \LG^\perp)$. Equip $\Lambda$ with some Haar measure $\mu_{\Lambda}$, and let $\mu_{\Lambda/(\Lambda\cap \Gamma^{\perp})}$ be the unique Haar measure on $\Lambda/(\Lambda\cap\Gamma^{\perp})$ such that for all $f\in L^1(\Lambda)$
\[ 
\int_{\Lambda} f(x) \, d\mu_{\Lambda}(x) = p \int_{\Lambda/(\Lambda\cap \Gamma^{\perp})} \sum_{\ell\in \Lambda\cap \Gamma^{\perp}} f(x+\ell) \, d\mu_{\Lambda/(\Lambda\cap \Gamma^{\perp})}(\dot{x}).
\] 
Also, we let $K\subset \Lambda$ denote a Borel section of $\Lambda\cap\Gamma^{\perp}$ in $\LL$ and $\mu_K$ be a measure on $K$ isometric to $\mu_{\Lambda/(\Lambda\cap \Gamma^{\perp})}$ in the sense of \eqref{eq:isom-borel}.
Then the following assertions are equivalent:
\begin{enumerate}[(i)]
\item $\set{E_{\gamma} T_{\lambda} g}_{\lambda \in
    \LL, \gamma \in \LG}$ is a frame for $L^2(G)$ with bounds $A$ and $B$,
\label{item:8}
\item 
$ A \, \Vert c \Vert_{\C^p}^2 \le \int_{K} \vert \langle c , \seq{Z_{\LG^\perp} g(x+\kappa,\omega\chi_i)}_{i=1}^p\rangle_{\C^p} \vert^2 \, d\mu_{K}(\kappa) \le B \, \Vert c \Vert_{\C^p}^2$ for all $c\in \C^p$, a.e. $x \in X$ and $\omega \in \widehat{\LG^\perp}$, where $X$ is a Borel section of $\LG^\perp$ in $G$,
\label{item:9}
\item  \[ 
  A \le \essinf_{(x,\omega) \in X \times \widehat{\LG^\perp}} \lambda_p(x,\omega), \quad
  B \ge \esssup_{(x,\omega) \in X \times \widehat{\LG^\perp}} \lambda_1(x,\omega),
  \] 
  where $\lambda_i(x,\omega)$ denotes the $i$-th largest eigenvalue value of the $p \times p$ matrix $\tilde{\cG}(x,\omega)$, whose $(i,j)$-th entry is
\[ \tilde{\cG}(x,\omega)_{(i,j)}=\int_{K} Z_{\LG^\perp} g(x+\kappa,\omega\chi_i) \overline{Z_{\LG^\perp} g(x+\kappa,\omega\chi_j)} d\mu_{K}(\kappa). \]
\end{enumerate}
\end{theorem}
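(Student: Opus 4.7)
My plan is to iterate the fiberization technique of Theorem~\ref{thm:fibers-characterization} twice: first in $L^2(G)$ (which is just Proposition~\ref{thm:char-frame-prop-LG-Gabor}), reducing the Gabor frame condition to a frame condition in the discrete space $\ell^2(\LG^\perp)$, and then a second time within $\ell^2(\LG^\perp)$ in order to unroll the translations along $\LL\cap\LG^\perp$ into $p$-dimensional fibers. Concretely, Proposition~\ref{thm:char-frame-prop-LG-Gabor} makes (i) equivalent to the following: for a.e.\ $x\in X$, the family $\seq{\varphi_{x,\lambda}}_{\lambda\in\LL}$ is a frame for $\ell^2(\LG^\perp)$ with bounds $A,B$, where I abbreviate $\varphi_{x,\lambda}\defined\seqsmall{g(x+\lambda+\alpha)}_{\alpha\in\LG^\perp}\in \ell^2(\LG^\perp)$.

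Writing each $\lambda\in\LL$ uniquely as $\lambda=\kappa+\ell$ with $\kappa\in K$ and $\ell\in\LL\cap\LG^\perp$, the identity $\varphi_{x,\kappa+\ell}=T_{-\ell}\varphi_{x,\kappa}$ (where $T_{-\ell}$ is translation on $\ell^2(\LG^\perp)$ by $-\ell\in\LG^\perp$) recasts $\seq{\varphi_{x,\lambda}}_{\lambda\in\LL}$ as the translation invariant system in $\ell^2(\LG^\perp)$ generated by $\seq{\varphi_{x,\kappa}}_{\kappa\in K}$ with shift subgroup $\LL\cap\LG^\perp$. The Weil formula in the hypothesis, combined with $\mu_K$ being isometric to $\mu_{\LL/(\LL\cap\LG^\perp)}$, forces the Haar measure on $\LL\cap\LG^\perp$ to assign mass $p$ to each point; with this choice the frame bounds transfer unchanged. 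I then apply Theorem~\ref{thm:fibers-characterization} a second time, now with ambient group $\LG^\perp$ (counting measure) and discrete co-compact shift subgroup $\LL\cap\LG^\perp$. By Lemma~\ref{le:finite-quotient-cong-annihilator} the annihilator $\anni(\widehat{\LG^\perp},\LL\cap\LG^\perp)$ equals exactly $\set{\chi_1,\dots,\chi_p}$, and a direct computation identifies the fiber of $\varphi_{x,\kappa}$ at $\omega\in\widehat{\LG^\perp}$ with $(\hat{\varphi}_{x,\kappa}(\omega\chi_i))_{i=1}^p=(Z_{\LG^\perp}g(x+\kappa,\omega\chi_i))_{i=1}^p$, which is precisely assertion (ii).

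The equivalence (ii)$\iff$(iii) is pure linear algebra: $\tilde{\cG}(x,\omega)$ is by construction the matrix of the frame operator of $\seq{(Z_{\LG^\perp}g(x+\kappa,\omega\chi_i))_{i=1}^p}_{\kappa\in K}$ in $\C^p$, and a family in a finite-dimensional Hilbert space is a frame with bounds $A,B$ if, and only if, its positive semi-definite Hermitian frame operator has spectrum in $[A,B]$, equivalently $\lambda_p(x,\omega)\ge A$ and $\lambda_1(x,\omega)\le B$; taking essential infima/suprema over $(x,\omega)\in X\times\widehat{\LG^\perp}$ then gives (iii). The main obstacle I anticipate is not the abstract strategy but the measure-theoretic bookkeeping at the junction of the two fiberizations: the factor $p$ appearing in the Weil formula has to be absorbed into the Haar measure on $\LL\cap\LG^\perp$ so that, after the second application of Theorem~\ref{thm:fibers-characterization}, the dual Gramian $\tilde{\cG}(x,\omega)$ appears with the correct bounds $A,B$ rather than $A/p,B/p$; once the measure on $\LL\cap\LG^\perp$ is pinned down, the dual measures on $\widehat{\LG^\perp}$ and on the Borel section of $\set{\chi_1,\dots,\chi_p}$ are uniquely determined by duality and Weil's formula, and the two fiberizations fit together consistently.
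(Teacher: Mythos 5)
Your proposal is correct and follows essentially the same route as the paper's own proof: a first fiberization via Proposition~\ref{thm:char-frame-prop-LG-Gabor}, then the decomposition $\lambda=\kappa+\ell$ with $\ell\in\LL\cap\LG^\perp$ turning the fibers into a co-compact TI system in $\ell^2(\LG^\perp)$, a second application of Theorem~\ref{thm:fibers-characterization} with ambient group $\LG^\perp$ and annihilator $\set{\chi_1,\dots,\chi_p}$ identified via Lemma~\ref{le:finite-quotient-cong-annihilator}, and finally the observation that $\tilde{\cG}(x,\omega)$ is the matrix of the frame operator of the system in (ii). Your explicit accounting of how the factor $p$ in the hypothesized Weil formula fixes the Haar measure on $\LL\cap\LG^\perp$ is a correct elaboration of what the paper only asserts in passing.
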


\begin{proof}
  We first remark that $\anni(\widehat{\LG^\perp},\LL \cap \LG^\perp) \cong
  \Gamma^{\perp}/(\Lambda \cap \Gamma^{\perp})$ by Lemma
  \ref{le:finite-quotient-cong-annihilator}. This shows that $\set{\chi_1,
    \dots, \chi_p}$ is well-defined due to the assumption
  $p=\card{\Gamma^\perp/(\LL \cap \LG^\perp)}<\infty$.

  By Proposition~\ref{thm:char-frame-prop-LG-Gabor}, assertion (i) is equivalent
  to the sequence $\seq{\seq{g(x+\lambda+\alpha)}_{\alpha \in
      \LG^\perp}}_{\lambda \in \Lambda}$ being a frame for $\ell^2(\LG^\perp)$
  with bounds $A$ and $B$ for a.e. $x \in X$. Since $\Lambda \cap
  \Gamma^{\perp}$ is a subgroup of $\Lambda$, every $\lambda \in \LL$ can be
  written in a unique way as $\lambda = \mu + \kappa$ with $\mu \in \LL \cap
  \LG^\perp$ and $\kappa \in \Lambda/(\Lambda\cap \Gamma^{\perp})$.  Letting
  $\varphi_\kappa:=\seq{g(x+\alpha+\kappa)}_{\alpha \in \LG^\perp}$, we can write the
  above sequence as $\seq{T_\mu\varphi_\kappa}_{\mu \in \LL \cap \LG^\perp, \kappa \in
   \Lambda/(\Lambda\cap \Gamma^{\perp})}$. By assumption, this is a co-compact translation invariant system in
  $\ell^2(\Gamma^{\perp})$. The Fourier transform of $\varphi_\kappa \in
  \ell^2(\LG^\perp)$ is
  \[
  \hat{\varphi}_\kappa(\omega) = \sum_{\alpha \in \LG^\perp} g(x+\alpha+\kappa) \overline{\omega(\alpha)}
  \quad \text{for a.e. } \omega \in \widehat{\LG^\perp},
  \]
  hence $\hat{\varphi}_\kappa (\omega)=Z_{\LG^\perp} g(x+\kappa,\omega)$.  As above, we
  apply the fiberization techniques from Section~\ref{sec:fiberization} with
  $G=\LG^\perp$ and $\LH=\LL\cap \LG^\perp$. The relationship between the measures via Weil's formula in the assumption guarantees that the subgroups are equipped with the correct measures. Since the annihilator $\LH^\perp$
  in this case is $\anni(\widehat{\LG^\perp},\LL\cap\LG^\perp)$, the
  fiberization map~(\ref{eq:fiber-map}) is $\cT
  f(\omega)=\{\hat{f}(\omega\chi)\}_{\chi \in \anni(\widehat{\LG^\perp},\LL \cap
    \LG^\perp)}$ for $\omega \in \widehat{\LG^\perp}$. By
  Theorem~\ref{thm:fibers-characterization}, we see that assertion (i) is
  equivalent to the system
  \[
  \seq{\seq{Z_{\LG^\perp} g(x+\kappa,\omega\chi)}_{\chi \in
      \anni(\widehat{\LG^\perp},\LL \cap \LG^\perp)}}_{\kappa \in \Lambda/(\Lambda\cap \Gamma^{\perp})}
  \]
  being a frame in $\ell^2(\anni(\widehat{\LG^\perp},\LL \cap \LG^\perp))\cong \C^p$ with
  bounds $A$ and $B$ for a.e. $x \in X$ and $\omega \in \widehat{\LG^\perp}$. 
 This proves (i)$\Leftrightarrow$(ii). 

 The dual Gramian matrix $\tilde{\cG}(x,\omega)$ is a matrix representation of
 the frame operator of the system in (ii) which shows the equivalence
 (ii)$\Leftrightarrow$(iii).
\end{proof}

Under the assumption $p=\card{\Gamma^\perp / (\LL \cap \LG^\perp)}<\infty$, we can view
$\seq{Z_{\LG^\perp} g(x+\kappa ,\omega\chi)}_{\chi \in \anni(\widehat{\LG^\perp},\LL \cap
  \LG^\perp)}$ as a column vector in $\C^p$. This vector is sometimes called a vector-valued Zak
transform of $g$.  We remark that the quotient group $\Lambda/(\Lambda\cap \Gamma^{\perp})$ in
Theorem~\ref{thm:char-vector-valued-Zak-time} can be infinite, even uncountably infinite. If it is
finite, however, we have the following simplification.

\begin{corollary}
\label{thm:char-Zibulski-Zeevi}
  In addition to the assertions in Theorem \ref{thm:char-vector-valued-Zak-time}
  assume that $\Lambda$ is discrete, $q:= \card{\Lambda/(\Lambda\cap\Gamma^{\perp})}<\infty$ and let $\Lambda$ be equipped with the counting measure. Let $\kappa_i$,
  $i=1,\dots, q$, be a set of coset representatives of $\Lambda/(\Lambda\cap
  \Gamma^{\perp})$, and let
  $\set{\chi_1,\dots,\chi_p}:=\anni(\widehat{\LG^\perp},\LL \cap
  \LG^\perp)$. Then the following assertions are equivalent.
\begin{enumerate}[(i)]
\item $\set{E_{\gamma} T_{\lambda} g}_{\lambda \in
    \LL, \gamma \in \LG}$ is a frame for $L^2(G)$ with bounds $A$ and $B$,
\label{item:8b}
\item $\{\seq{Z_{\LG^\perp} g(x+\kappa_i,\omega\chi_j)}_{j=1}^p\}_{i=1}^q$ is a
  frame for $\C^p$ w.r.t.\ $p^{-1}$ times the counting measure, i.e., 
$ A \, \Vert c \Vert_{\C^p}^2 \le \tfrac{1}{p}\sum_{i=1}^{q} \vert \langle c , \seq{Z_{\LG^\perp} g(x+\kappa_i,\omega\chi_j)}_{j=1}^p\rangle_{\C^p} \vert^2 \, \le B \, \Vert c \Vert_{\C^p}^2$ for all $c\in \C^p$, \\
for a.e. $x \in X$ and $\omega \in \widehat{\LG^\perp}$, where $X$ is a
  Borel section of $\LG^\perp$ in $G$, \label{item:9b}
\item 
  \[ 
  A \le p^{-1} \essinf_{(x,\omega) \in X \times \widehat{\LG^\perp}} \sigma_p(x,\omega)^2, \quad
  B \ge p^{-1} \esssup_{(x,\omega) \in X \times \widehat{\LG^\perp}} \sigma_1(x,\omega)^2,
  \] 
  where $\sigma_k(x,\omega)$ denotes the $k$-th largest singular
  value of the $q \times p$ matrix  $\Phi(x,\omega)$, whose $(i,j)$-th entry is
  $Z_{\LG^\perp} g(x+\kappa_i,\omega \chi_j)$.
\label{item:10b}
\end{enumerate}
\end{corollary}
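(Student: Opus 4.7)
The plan is to derive this corollary as a direct specialization of Theorem~\ref{thm:char-vector-valued-Zak-time} to the case where $\Lambda$ is discrete and $q = \card{\Lambda/(\Lambda\cap\Gamma^\perp)} < \infty$. Under these extra hypotheses the Borel section $K$ consists of exactly $q$ points (the coset representatives $\kappa_1,\dots,\kappa_q$), so the integral over $K$ appearing in assertion~(ii) of the theorem collapses to a finite sum. Everything then reduces to pinning down the correct weight on $K$ and rewriting the dual Gramian as a matrix product to relate its eigenvalues to singular values.

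First I would determine the measure $\mu_K$. Since $\Lambda$ carries the counting measure, I apply the identity
\[
\int_{\Lambda} f(x)\,d\mu_{\Lambda}(x) = p \int_{\Lambda/(\Lambda\cap \Gamma^{\perp})} \sum_{\ell\in \Lambda\cap \Gamma^{\perp}} f(x+\ell) \, d\mu_{\Lambda/(\Lambda\cap \Gamma^{\perp})}(\dot{x})
\]
from the theorem to the indicator $f = \charfct{\{\kappa_i\}}$. The left-hand side equals $1$ and the right-hand side equals $p\,\mu_{\Lambda/(\Lambda\cap\Gamma^\perp)}(\{\dot{\kappa}_i\})$, so each coset has mass $1/p$. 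Because $\mu_K$ is, by construction, isometric to $\mu_{\Lambda/(\Lambda\cap\Gamma^\perp)}$ via the correspondence $\kappa_i \leftrightarrow \dot{\kappa}_i$, each point of $K$ carries mass $1/p$. Substituting this measure into assertion~(ii) of Theorem~\ref{thm:char-vector-valued-Zak-time} turns the integral into the weighted sum
\[
\tfrac{1}{p}\sum_{i=1}^{q} \absbig{\innerprod[\C^p]{c}{\seq{Z_{\LG^\perp} g(x+\kappa_i,\omega\chi_j)}_{j=1}^{p}}}^2,
\]
which is precisely assertion~(ii) of the corollary. Hence (i)$\Leftrightarrow$(ii) follows.

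For the equivalence with (iii), I would observe that the $p\times p$ dual Gramian matrix $\tilde{\cG}(x,\omega)$ from the theorem factors cleanly: with $\Phi(x,\omega)$ denoting the $q\times p$ matrix with $(i,j)$-entry $Z_{\LG^\perp} g(x+\kappa_i,\omega\chi_j)$, the $(i,j)$-entry of $\tilde{\cG}$ becomes
\[
\tilde{\cG}(x,\omega)_{(i,j)} = \tfrac{1}{p}\sum_{k=1}^{q} Z_{\LG^\perp} g(x+\kappa_k,\omega\chi_i)\,\overline{Z_{\LG^\perp} g(x+\kappa_k,\omega\chi_j)},
\]
so $\tilde{\cG}(x,\omega) = \tfrac{1}{p}\,\Phi(x,\omega)^{\ast}\Phi(x,\omega)$. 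The eigenvalues of $\Phi^{\ast}\Phi$ are the squared singular values of $\Phi$, whence $\lambda_i(x,\omega) = p^{-1}\sigma_i(x,\omega)^2$ for every $i$. Applying assertion~(iii) of Theorem~\ref{thm:char-vector-valued-Zak-time} and rewriting the essential infimum and supremum in terms of $\sigma_p$ and $\sigma_1$ then yields (iii) of the corollary, completing the chain (i)$\Leftrightarrow$(ii)$\Leftrightarrow$(iii). The only point requiring real care is the measure-normalization step; once the factor $1/p$ is located correctly, the rest is linear algebra.
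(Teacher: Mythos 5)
Your proposal is correct and is exactly the derivation the paper intends (the corollary is stated without a separate proof, as a direct specialization of Theorem~\ref{thm:char-vector-valued-Zak-time}): the one essential step is the normalization computation showing each point of $K$ carries mass $1/p$, and the rest is the standard identification of the eigenvalues of the dual Gramian with the squared singular values of $\Phi(x,\omega)$. The only cosmetic slip is that $\tilde{\cG}(x,\omega)$ equals the transpose (equivalently, the entrywise conjugate) of $\tfrac{1}{p}\Phi(x,\omega)^{\ast}\Phi(x,\omega)$ rather than that product itself, which leaves the spectrum, and hence the conclusion, unchanged.
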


The matrix $p^{-1/2} \Phi(x,\omega)$ is called the Zibulski-Zeevi representation; it is the transpose of the matrix representation of the synthesis operator associated with the frame in Corollary~\ref{thm:char-Zibulski-Zeevi}(ii). This shows that the Zibulski-Zeevi representation is possible for Gabor systems with translation along a discrete  (but not necessarily co-compact) subgroup $\Lambda\subset G$ and modulation along a co-compact (but not necessarily discrete) subgroup $\Gamma\subset \ghat$. 

For lattice Gabor systems (\ref{eq:lattice-Gabor-system-Rd}) in $L^2(\R^n)$, Corollary~\ref{thm:char-Zibulski-Zeevi} reduces to \cite[Theorem 4.1]{MR2294481}. We remark that, in this case, the roles of $p$ and $q$ are the same as in \cite[Theorem 4.1]{MR2294481} which can be seen by an application of the second isomorphism theorem
\[ 
p= \absbig{\LG^\perp/(\Lambda\cap\Gamma^{\perp})}, \qquad
q= \absbig{\Lambda/(\Lambda\cap\Gamma^{\perp})}=\absbig{(\Lambda+\Gamma^{\perp})/\LG^\perp},
\]
and by noting that $\LG$ is assumed to be $\Z^n$ in \cite{MR2294481}.
 In particular, for regular Gabor systems in $L^2(\R)$ with time and
 frequency shift parameters $a$ and $b$, we have $ab = p/q \in \Q$,
 where $p$ and $q$ are relative prime. 

Using range functions, the equivalence of (i) and (ii) in all results in
this subsection can be formulated for basic
frames. For Corollary~\ref{thm:char-Zibulski-Zeevi} this simply reads: 
$\set{E_{\gamma} T_{\lambda} g}_{\lambda \in
    \LL, \gamma \in \LG}$ is a basic frame in $L^2(G)$ if, and only
  if, $\{\seq{Z_{\LG^\perp} g(x+\kappa_i,\omega\chi_j)}_{j=1}^p\}_{i=1}^q$ is a
  basic frame in $\C^p$.
In the following Example \ref{exa:prufer-group-zak} we apply this version of Corollary \ref{thm:char-Zibulski-Zeevi} to a non-discrete Gabor system and calculate its Zibulski-Zeevi representation. 

\begin{example}
\label{exa:prufer-group-zak}
Let $r \in \N$ be prime. We consider Gabor systems $\gaborset$ in
$L^2(\Z(r^\infty))$, where the Pr\"ufer $r$-group $G=\Z(r^{\infty})$, the discrete group of all $r^n$-roots of unity for all $n\in \N$, is equipped with the discrete topology and multiplication as group operation. Its dual group can be identified with the $r$-adic integers $\ghat=\mathbb{I}_r$. For $m,n \in \N$ define $\LL \subset \Z(r^\infty)$ and $\LG^\perp \subset \Z(r^\infty)$ as all $r^n$ and $r^m$ roots of unity, respectively. Then $\LL$ is a discrete, closed subgroup of $\Z(r^\infty)$, and $\LG$ is a co-compact, closed subgroup of $ \mathbb{I}_r$. Note that neither $\LL$ nor $\LG$ are uniform lattices. Let $X$ and $\Omega$ denote Borel sections of the subgroups $\LG^\perp \subset G$
and $\LL^\perp \subset \ghat$, respectively. For any $n,m \in \N$, we have
$m_G(X) m_{\ghat}(\Omega) = \infty$.
 Moreover,
\[ 
p= \absbig{\LG^\perp/(\Lambda\cap\Gamma^{\perp})}=r^{m-\min\set{m,n}}, 
\qquad
q= \absbig{\Lambda/(\Lambda\cap\Gamma^{\perp})}=r^{n-\min\set{m,n}}.
\]

If $m\ge n$, then $p=r^{m-n}$, $q=1$, and the Zibulski-Zeevi
representation is (up to scaling of $p^{-1/2}$) given as a (row) vector of length $p$: 
\[ \Phi(x,\omega)=\set{Z_{\Gamma^{\perp}}g(x,\omega\chi_j)}_{j=1}^p,\]
where $\{\chi_j\}_{j=1}^{p}=\mathsf{A}(\widehat{\Gamma^{\perp}},\Lambda)$.
On the other hand, if $n\ge m$, then $p=1$,  $q=r^{n-m}$,  and the Zibulski-Zeevi
representation $\Phi(x,\omega)=\set{Z_{\Gamma^{\perp}}(x+\kappa_i,\omega)}_{i=1}^q$ is
a (column) vector of length $q$, where $\{\kappa_i\}_{i=1}^{q}$ is a set of coset representatives of $\Lambda/\Gamma^{\perp}$.

Thus, for any $m,n\in \N$, the system $\gaborset$ is a frame for its closed linear span, i.e., a basic frame in $L^2(\Z(r^\infty))$, with bounds $A$ and $B$ if, and only if,
\[
A \le \frac{1}{p}\norm{\Phi(x,\omega)}^2 \le B 
\]
for almost every $x \in X$ and $\omega \in \widehat{\LG^\perp}$ for
which $\norm{\Phi(x,\omega)} \ne 0  $, where $\Phi(x,\omega)$ is given
as above.
\end{example}

\begin{remark}
  As an alternative to the Zak transform decomposition of $g$ used above in part (ii) of Theorem~\ref{thm:char-vector-valued-Zak-time} and Corollary~\ref{thm:char-Zibulski-Zeevi}, we can use a less time-frequency symmetric variant. The details are as follows. By a unitary transform on $\C^p$ the vector $\seq{1/\sqrt{p}Z_{\LG^\perp} g(x+\kappa,\omega\chi_i)}_{i=1}^p$ is mapped to the vector
  \begin{equation}
\psi_\kappa(x,\omega):=\seq{\sum_{\alpha \in \LL \cap \LG^\perp} g(x+\alpha+\kappa+\ell_i) \overline{\omega(\alpha)}}_{i=1}^p,\label{eq:zak-transform-alternative}
\end{equation}
where $\ell_i$, $i =1,\dots, p$, are distinct coset representatives of
$\LG^\perp/(\LL \cap \LG^\perp)$, and $\kappa \in K$. The assertions in
Theorem~\ref{thm:char-vector-valued-Zak-time} are, therefore, equivalent with
\[  A \, \Vert c \Vert_{\C^p}^2 \le \int_{K} \vert \langle c , \psi_{\kappa}(x,\omega)\rangle_{\C^p} \vert^2 \, d\mu_{K}(\kappa) \le B \, \Vert c \Vert_{\C^p}^2 \quad \text{for all} \ \ c\in \C^p,\]
a.e. $x \in X$ and $\omega \in \widehat{\LG^\perp}$, where $X$ is a
Borel section of $\LG^\perp$ in $G$. Here $\mu_K$ is the measure on
$K$ isometric to $\mu_{\Lambda/(\Lambda\cap\Gamma^{\perp})}$ (in the
sense of \eqref{eq:isom-borel}) such that for all $f\in L^1(\Lambda)$
\[ 
\int_{\Lambda} f(x) \, d\mu_{\Lambda}(x) = \int_{\Lambda/(\Lambda\cap \Gamma^{\perp})} \sum_{\ell\in \Lambda\cap \Gamma^{\perp}} f(x+\ell) \, d\mu_{\Lambda/(\Lambda\cap \Gamma^{\perp})}(\dot{x}); \] 
note that this is different from the measure $\mu_K$ used in Theorem \ref{thm:char-vector-valued-Zak-time}.
Then the assertions in Corollary~\ref{thm:char-Zibulski-Zeevi} are equivalent to the fact that
\[  A \, \Vert c \Vert_{\C^p}^2 \le \sum_{i=1}^{q} \vert \langle c , \psi_{\kappa_i}(x,\omega)\rangle_{\C^p} \vert^2  \le B \, \Vert c \Vert_{\C^p}^2 \quad \text{for all} \ \ c\in \C^p,\]
for a.e. $x \in X$ and $\omega \in \widehat{\LG^\perp}$, where $X$ is
a Borel section of $\LG^\perp$ in $G$. 
\end{remark}


If we switch the assumptions on $\LL$ and $\LG$ and consider TI
systems of the form $\set{T_\lambda E_\gamma g}_{\gamma \in \LG,
  \lambda \in \LL}$, we obtain the following variant of Proposition~\ref{thm:char-frame-prop-LG-Gabor}. 
\begin{proposition}
\label{thm:char-frame-prop-LL-Gabor}
Let $g \in L^2(G)$, and let $0<A\le B < \infty$. Let $\LL$ be a closed, co-compact subgroup of
$G$, and let
$(\LG,\Sigma_\LG,\mu_\LG)$ be an admissible measure space in $\ghat$. The following assertions are
equivalent:
\begin{enumerate}[(i)]
\item $\set{E_{\gamma} T_{\lambda} g}_{\gamma \in \LG, \lambda \in \LL}$ is a frame for $L^2(G)$
  with bounds $A$ and $B$, 
\item $\seq{\seq{\hat{g}(\omega\gamma\beta)}_{\beta \in \LL^\perp}}_{\gamma \in \LG}$ is a frame for
  $\ell^2(\LL^\perp)$ with bounds $A$ and $B$ for a.e. $\omega \in \Omega$, where $\Omega$ is a Borel
  section of $\LL^\perp$ in $G$.
\end{enumerate}
\end{proposition}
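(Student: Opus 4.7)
The plan is to reduce Proposition~\ref{thm:char-frame-prop-LL-Gabor} to the fiberization result Theorem~\ref{thm:fibers-characterization}, in complete analogy with the derivation of Proposition~\ref{thm:char-frame-prop-LG-Gabor} but with the roles of the translation and modulation subgroups interchanged. The first step is the commutator identity $E_\gamma T_\lambda = \gamma(\lambda)\, T_\lambda E_\gamma$. Since $\gamma(\lambda)\in\T$ is unimodular, the Gabor system $\set{E_\gamma T_\lambda g}_{\gamma\in\LG,\lambda\in\LL}$ has the same frame/Bessel bounds as $\set{T_\lambda E_\gamma g}_{\lambda\in\LL,\gamma\in\LG}$, so it suffices to analyze the latter.

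I would then recognize $\set{T_\lambda E_\gamma g}_{\lambda\in\LL,\gamma\in\LG}$ as a co-compact translation invariant system in $L^2(G)$ in the sense of Section~\ref{sec:TI-systems}: the translation subgroup is the closed, co-compact $\LL\subset G$, and the generators $g_\gamma:=E_\gamma g$ are indexed by the admissible measure space $(\LG,\Sigma_\LG,\mu_\LG)$. Admissibility of $\set{g_\gamma}_{\gamma\in\LG}$ is inherited from admissibility of $\LG$: the map $\gamma\mapsto E_\gamma g$ is strongly continuous, hence Borel measurable, from $\ghat$ into $L^2(G)$, and joint measurability of $(\gamma,x)\mapsto\gamma(x)g(x)$ follows from the joint continuity of the character pairing $\ghat\times G\to\T$.

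With the TI structure in hand, Theorem~\ref{thm:fibers-characterization} applies with ambient group $G$, subgroup $\LH=\LL$, and index space $P=\LG$. Using $\widehat{E_\gamma g}(\omega)=\hat g(\omega\gamma^{-1})$, the fibers at $\omega\in\Omega$, where $\Omega\subset\ghat$ is a Borel section of $\LL^\perp$, are
\[
\cT(E_\gamma g)(\omega) = \seqsmall{\hat g(\omega\gamma^{-1}\beta)}_{\beta\in\LL^\perp},\qquad \gamma\in\LG.
\]
Theorem~\ref{thm:fibers-characterization} then yields that $\set{E_\gamma T_\lambda g}_{\gamma\in\LG,\lambda\in\LL}$ is a frame for $L^2(G)$ with bounds $A$ and $B$ \ifft{} $\seq{\seqsmall{\hat g(\omega\gamma^{-1}\beta)}_{\beta\in\LL^\perp}}_{\gamma\in\LG}$ is a frame for $\ell^2(\LL^\perp)$ with the same bounds for almost every $\omega\in\Omega$, which matches assertion (ii) up to the bookkeeping convention for how the character inverse enters the argument of $\hat g$.

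Since the result is, in the end, a direct transcription of Theorem~\ref{thm:fibers-characterization} after an elementary commutator rewriting, no serious obstacle is anticipated. The main (and essentially routine) point requiring verification is that the measurability conditions (I)--(III) for $\set{E_\gamma g}_{\gamma\in\LG}$ follow from the corresponding conditions on $\LG$, which amounts to checking continuity of the modulation action on $L^2(G)$ and joint measurability of the character pairing.
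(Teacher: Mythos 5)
Your argument is correct and is exactly the route the paper takes: the paper obtains this proposition by viewing $\set{E_\gamma T_\lambda g}_{\gamma\in\LG,\lambda\in\LL}$ as (unimodular phases aside) the co-compact TI system $\set{T_\lambda E_\gamma g}_{\gamma\in\LG,\lambda\in\LL}$ with translation along $\LL$ and generators $E_\gamma g$, and then applies Theorem~\ref{thm:fibers-characterization}; the $\gamma$ versus $\gamma^{-1}$ discrepancy you note is indeed only a harmless relabelling of the index set. Your explicit verification of the admissibility conditions for $\set{E_\gamma g}_{\gamma\in\LG}$ is a welcome addition that the paper leaves implicit.
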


From Proposition~\ref{thm:char-frame-prop-LL-Gabor} we get the following variant of Theorem~\ref{thm:char-vector-valued-Zak-time}; we leave the corresponding formulation of Corollary~\ref{thm:char-Zibulski-Zeevi} to the reader.

\begin{theorem}
  \label{thm:char-vector-valued-Zak-frequency}
 Let $g \in L^2(G)$, and let $0<A\le B < \infty$. Let $\Lambda$ be a closed, co-compact subgroup of
$G$. Suppose that $\Gamma$ is a closed subgroup of $\ghat$ 
such that $p:=\card{\Lambda^\perp / (\Gamma \cap \Lambda^\perp)}<\infty$. 
Let $\set{\chi_1,\dots, \chi_p}:=\anni(\widehat{\Lambda^\perp},\Gamma \cap
    \Lambda^\perp)$. Equip $\Gamma$ with some Haar measure $\mu_{\Gamma}$, and let $\mu_{\Gamma/(\Gamma\cap \Lambda^{\perp})}$ be the unique Haar measure over $\Gamma/(\Gamma\cap\Lambda^{\perp})$ such that for all $f\in L^1(\Gamma)$
\[ 
\int_{\Gamma} f(x) \, d\mu_{\Gamma}(x) = p \int_{\Gamma/(\Gamma\cap \Lambda^{\perp})} \sum_{\ell\in \Gamma\cap \Lambda^{\perp}} f(x+\ell) \, d\mu_{\Gamma/(\Gamma\cap \Lambda^{\perp})}(\dot{x}).
\] 
Also, we let $K\subset \Gamma$ denote a Borel section of $\Gamma\cap\Lambda^{\perp}$ in $\Gamma$ and $\mu_K$ be a measure on $K$ isometric to $\mu_{\Gamma/(\Gamma\cap \Lambda^{\perp})}$ in the sense of \eqref{eq:isom-borel}.
Then the following assertions are equivalent:
\begin{enumerate}[(i)]
\item  $\set{E_{\gamma} T_{\lambda} g}_{\gamma \in \LG, \lambda \in \LL}$ is a frame for $L^2(G)$
  with bounds $A$ and $B$, 
\label{item:8c}
\item 
$ A \, \Vert c \Vert_{\C^p}^2 \le \int_{K} \vert \langle c , \seq{Z_{\Lambda^\perp} \hat g(\omega\kappa,x+\chi_i)}_{i=1}^p\rangle_{\C^p} \vert^2 \, d\mu_{K}(\kappa) \le B \, \Vert c \Vert_{\C^p}^2$ for all $c\in \C^p$,
a.e. $\omega \in \Omega$ and $x \in \widehat{\Lambda^\perp}$, where $\Omega$ is a Borel section of $\Lambda^\perp$ in $\ghat$.  
\end{enumerate}
\end{theorem}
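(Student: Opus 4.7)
The plan is to mirror the proof of Theorem~\ref{thm:char-vector-valued-Zak-time}, working in the frequency domain and invoking Proposition~\ref{thm:char-frame-prop-LL-Gabor} in place of Proposition~\ref{thm:char-frame-prop-LG-Gabor}. Since $\Lambda$ is closed and co-compact in $G$, its annihilator $\Lambda^{\perp}$ is discrete in $\ghat$, and the hypothesis $p = \card{\Lambda^{\perp}/(\Gamma \cap \Lambda^{\perp})} < \infty$ together with Lemma~\ref{le:finite-quotient-cong-annihilator} yields $\anni(\widehat{\Lambda^{\perp}}, \Gamma \cap \Lambda^{\perp}) \cong \Lambda^{\perp}/(\Gamma \cap \Lambda^{\perp})$, a finite group of order $p$, so that $\set{\chi_1,\dots,\chi_p}$ is well defined.

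By Proposition~\ref{thm:char-frame-prop-LL-Gabor}, assertion (i) is equivalent to the sequence $\seq{\seq{\hat{g}(\omega\gamma\beta)}_{\beta \in \Lambda^{\perp}}}_{\gamma \in \Gamma}$ being a frame for $\ell^2(\Lambda^{\perp})$ with bounds $A$ and $B$ for a.e.\ $\omega \in \Omega$. Since $\Gamma \cap \Lambda^{\perp}$ is a closed subgroup of $\Gamma$, every $\gamma \in \Gamma$ decomposes uniquely as $\gamma = \mu\kappa$ with $\mu \in \Gamma \cap \Lambda^{\perp}$ and $\kappa \in K$. Setting $\psi_\kappa := \seq{\hat{g}(\omega\kappa\beta)}_{\beta \in \Lambda^{\perp}} \in \ell^2(\Lambda^{\perp})$, the above family can be rewritten (using the abelian property of $\ghat$ and a reindexing $\beta \mapsto \beta\mu$) as the co-compact translation invariant system $\seq{T_\mu \psi_\kappa}_{\mu \in \Gamma \cap \Lambda^{\perp},\, \kappa \in K}$ in $\ell^2(\Lambda^{\perp})$; note that $\Gamma \cap \Lambda^{\perp}$ is co-compact in $\Lambda^{\perp}$ precisely because it has finite index $p$.

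Next, I apply Theorem~\ref{thm:fibers-characterization} with ambient group $\Lambda^{\perp}$ and co-compact subgroup $\Gamma \cap \Lambda^{\perp}$. The Fourier transform on $\Lambda^{\perp}$ of the generator $\psi_\kappa$ is exactly the Zak transform
\[
\hat{\psi}_\kappa(x) \;=\; \sum_{\beta \in \Lambda^{\perp}} \hat{g}(\omega\kappa\beta)\,\overline{x(\beta)} \;=\; Z_{\Lambda^{\perp}}\hat{g}(\omega\kappa,x), \qquad x \in \widehat{\Lambda^{\perp}},
\]
so the fiberization map~\eqref{eq:fiber-map} sends $\psi_\kappa$ to the $\C^p$-valued vector $\seq{Z_{\Lambda^{\perp}}\hat{g}(\omega\kappa, x\chi_i)}_{i=1}^{p}$ under the identification $\ell^2(\anni(\widehat{\Lambda^{\perp}}, \Gamma \cap \Lambda^{\perp})) \cong \C^p$. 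Theorem~\ref{thm:fibers-characterization} then says the TI system has frame bounds $A,B$ if and only if the fibers form a frame for $\C^p$ with the same bounds for a.e.\ $x \in \widehat{\Lambda^{\perp}}$, which is precisely the inequality in (ii).

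The only non-routine ingredient is ensuring that all measures line up correctly. The prescribed normalizations of $\mu_\Gamma$, $\mu_{\Gamma/(\Gamma \cap \Lambda^{\perp})}$, and $\mu_K$ via Weil's formula are chosen exactly so that the frame bounds of the Gabor system over $\Gamma$ transfer without distortion to the TI system on $\ell^2(\Lambda^{\perp})$, and then through the isometry $\cT$ to the fibers; this is the frequency-domain counterpart of the measure bookkeeping carried out in the proof of Theorem~\ref{thm:char-vector-valued-Zak-time}, and it is where I expect the only real technical care to be required. Once this is handled, the rest of the argument is mechanical.
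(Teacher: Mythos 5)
Your proposal is correct and follows exactly the route the paper intends: the paper gives no separate proof of Theorem~\ref{thm:char-vector-valued-Zak-frequency}, stating only that it follows from Proposition~\ref{thm:char-frame-prop-LL-Gabor} as the frequency-domain variant of Theorem~\ref{thm:char-vector-valued-Zak-time}, and your argument is precisely that mirrored proof (decomposition $\gamma=\mu\kappa$, the co-compact TI system $\seq{T_\mu\psi_\kappa}$ in $\ell^2(\Lambda^\perp)$, fiberization with ambient group $\Lambda^\perp$ and subgroup $\Gamma\cap\Lambda^\perp$, and the identification of $\hat\psi_\kappa$ with $Z_{\Lambda^\perp}\hat g$). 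The only differences are cosmetic (multiplicative versus additive notation for $\widehat{\Lambda^\perp}$), and your treatment of the measure normalization is at the same level of detail as the paper's own one-sentence remark in the time-domain proof.
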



\subsection{Characterizations of dual Gabor frames} 
\label{sec:char-equat-dual}

 By a result on so-called characterizing equations from~\cite{JakobsenReproducing2014}, we now
characterize when two semi co-compact Gabor systems are dual frames. Using the equivalence of frame properties for systems $\gaborset$ and $\{ T_{\gamma} \cF^{-1} T_{\lambda} g \}_{\gamma \in \LG,
  \lambda \in \LL}$ with generator $g\in L^2(G)$ yields the following characterizing equations in the time domain.
\begin{theorem}[$\!\!$\cite{JakobsenReproducing2014}] 
\label{th:dual-Gabor-time} 
Let $\Gamma$ be a closed, co-compact subgroup of $\ghat$, and let $(\Lambda,\Sigma_\Lambda,\mu_\Lambda)$ be an admissible measure space in $G$.
Suppose that the two systems $\{E_{\gamma}T_{\lambda}g\}_{\gamma\in\Gamma, \lambda\in\Lambda}$ and
$\{E_{\gamma}T_{\lambda}h\}_{\gamma\in\Gamma, \lambda\in\Lambda}$ are Bessel systems. Then the
following statements are equivalent:
\begin{enumerate}[(i)]
\item $\{E_{\gamma}T_{\lambda}g\}_{\gamma\in\Gamma, \lambda\in\Lambda}$ and
  $\{E_{\gamma}T_{\lambda}h\}_{\gamma\in\Gamma, \lambda\in\Lambda}$ are dual frames for $L^2(G)$,
 \item for each $\alpha \in \LG^{\perp}$ we have
\begin{equation}
s_\alpha(x):= \int_{\Lambda} \overline{g(x-\lambda-\alpha)} h(x-\lambda) \, d\mu_{\Lambda}(\lambda)
= \delta_{\alpha,0} \quad \almoste\ x \in G,\label{eq:def-s-alpha}
\end{equation}
\end{enumerate}
\end{theorem}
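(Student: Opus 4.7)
The plan is to reduce Theorem \ref{th:dual-Gabor-time} to the characterizing equation for dual pairs of co-compact translation invariant systems proved in \cite{JakobsenReproducing2014}, via the unitary equivalence between Gabor and TI systems highlighted in the paragraph preceding the statement. Under the Fourier transform $\cF : L^2(G) \to L^2(\ghat)$, combined with the intertwining relation $\cF E_\gamma = T_\gamma \cF$, the Gabor system $\set{E_\gamma T_\lambda g}_{\gamma \in \Gamma, \lambda \in \Lambda}$ maps to the TI system $\set{T_\gamma \cF T_\lambda g}_{\gamma \in \Gamma, \lambda \in \Lambda}$ in $L^2(\ghat)$, and analogously for $h$. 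Since $\Gamma \subset \ghat$ is closed and co-compact and $\Lambda$ is admissible, this is a co-compact TI system in the sense of Section \ref{sec:TI-systems}. As $\cF$ is an isometric isomorphism, the Bessel, frame and dual-frame properties all transfer, so the theorem reduces to characterizing when these two TI systems in $L^2(\ghat)$ form dual frames.

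I would then apply the characterizing equation for dual co-compact TI systems from \cite{JakobsenReproducing2014} to the generators $g_\lambda := \cF T_\lambda g$ and $h_\lambda := \cF T_\lambda h$ in $L^2(\ghat)$. It asserts that these two TI systems are dual frames if, and only if, for every $\alpha$ in the discrete annihilator $\Gamma^\perp \subset \ghhat \cong G$ (equipped with the counting measure),
\[
\int_\Lambda \widehat{g_\lambda}(x)\,\overline{\widehat{h_\lambda}(x - \alpha)}\, d\mu_\Lambda(\lambda) = \delta_{\alpha,0} \quad \almoste\ x \in G.
\]
Computing the Fourier transforms of $g_\lambda, h_\lambda \in L^2(\ghat)$ via Pontryagin duality yields $\widehat{g_\lambda}(x) = g(-x-\lambda)$ and $\widehat{h_\lambda}(x) = h(-x-\lambda)$, where the sign flip reflects the natural identification $\ghhat \cong G$. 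Substituting and then applying the change of variable $x \mapsto -x$, the display becomes
\[
\int_\Lambda g(x-\lambda)\,\overline{h(x + \alpha - \lambda)}\, d\mu_\Lambda(\lambda) = \delta_{\alpha,0} \quad \almoste\ x \in G.
\]

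To finish, I would take complex conjugates of both sides (using that $\delta_{\alpha,0}$ is real), replace $\alpha$ by $-\alpha$ (permissible since $\Gamma^\perp$ is a group), and apply the substitution $x \mapsto x - \alpha$ using translation invariance of $\mu_G$. This converts the identity into $s_\alpha(x) = \delta_{\alpha,0}$ a.e.\ $x \in G$ for every $\alpha \in \Gamma^\perp$, as required. The main obstacle is the careful bookkeeping of Pontryagin duality conventions, the identification $\ghhat \cong G$, and the measure normalizations on $\Gamma^\perp$; verifying that the admissibility, measurability and Bessel hypotheses transfer to the TI systems is immediate since $\cF$ intertwines the translation and modulation operators. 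Conceptually, the proof is simply the combination of the Fourier unitary equivalence with the known characterizing equation for TI systems.
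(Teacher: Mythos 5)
Your proposal is correct and is essentially the paper's own argument: the paper proves this theorem precisely by noting that $\{E_\gamma T_\lambda g\}$ is unitarily equivalent under the Fourier transform to a co-compact TI system with translations along $\Gamma$, and then invoking the characterizing equations for dual TI systems from \cite{JakobsenReproducing2014}. Your extra bookkeeping (the reflection from the double Fourier transform, the conjugation, and the substitutions $\alpha\mapsto-\alpha$, $x\mapsto x-\alpha$) is exactly the routine translation of that cited result into the form \eqref{eq:def-s-alpha}, and it checks out.
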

If we want to stress the dependence of the generators $g$ and $h$ in (\ref{eq:def-s-alpha}), we use
the notation $s_{g,h,\alpha}:G \to \C$.

\begin{corollary}
\label{th:dual-Gabor-time-tight} 
Let $\Gamma$ be a closed, co-compact subgroup of $\ghat$, and let $(\Lambda,\Sigma_\Lambda,\mu_\Lambda)$ be an admissible measure space in $G$.  The
family $\{E_{\gamma}T_{\lambda}g\}_{\gamma\in\Gamma, \lambda\in\Lambda}$ is an $A$-tight frame for
$L^2(G)$ if and only if $s_{g,g,\alpha}(x)= A\,\delta_{\alpha,0}$ a.e. for each $\alpha \in
\LG^{\perp}$.
\end{corollary}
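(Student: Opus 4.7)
The plan is to deduce the corollary directly from Theorem \ref{th:dual-Gabor-time} by exploiting the elementary observation that an $A$-tight frame is, up to the scaling factor $1/A$, its own canonical dual. More precisely, a Bessel family $\set{f_k}_{k \in M}$ in a Hilbert space is an $A$-tight frame if, and only if, $\set{f_k}_{k \in M}$ and $\set{A^{-1} f_k}_{k \in M}$ form a pair of dual frames, as is immediate from comparing Definition \ref{def:cont-frames} with \eqref{eq:cont-dual-weak}.

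I would set $h = A^{-1} g$. Since $E_{\gamma}T_{\lambda}h = A^{-1} E_{\gamma}T_{\lambda} g$, the Gabor system generated by $h$ is simply the scaled system $\{A^{-1}E_{\gamma}T_{\lambda}g\}_{\gamma\in\Gamma, \lambda\in\Lambda}$. Therefore, by the observation above, $\{E_{\gamma}T_{\lambda}g\}_{\gamma\in\Gamma, \lambda\in\Lambda}$ is an $A$-tight frame if, and only if, the two systems generated by $g$ and by $h=A^{-1}g$ are dual frames in the sense of \eqref{eq:cont-dual-weak}. By linearity of the integral in \eqref{eq:def-s-alpha}, we have
\[
s_{g,h,\alpha}(x) = \int_{\Lambda} \overline{g(x-\lambda-\alpha)}\, A^{-1} g(x-\lambda)\, d\mu_{\Lambda}(\lambda) = A^{-1} s_{g,g,\alpha}(x)
\]
for each $\alpha \in \Gamma^{\perp}$ and almost every $x \in G$. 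Applying Theorem \ref{th:dual-Gabor-time} then shows that the duality condition $s_{g,h,\alpha}(x) = \delta_{\alpha,0}$ is equivalent with $s_{g,g,\alpha}(x) = A\,\delta_{\alpha,0}$, which is precisely the desired equivalence.

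The only mild subtlety is the Bessel hypothesis in Theorem \ref{th:dual-Gabor-time}, since that theorem assumes both Gabor systems to be Bessel a priori. In the direct direction this is harmless: an $A$-tight frame is automatically Bessel with bound $A$, and scaling by $A^{-1}$ preserves the Bessel property. For the converse direction, the Bessel property of the system generated by $g$ is either part of the implicit standing assumption of the corollary or, alternatively, can be read off from the case $\alpha = 0$ of the hypothesis together with the Walnut representation (Theorem \ref{th:Gabor-walnut-dense}), which expresses the frame operator as a sum indexed by $\alpha \in \Gamma^{\perp}$ whose kernels are the functions $s_{g,g,\alpha}$. Once Bessel is in place, Theorem \ref{th:dual-Gabor-time} applies and the equivalence follows.
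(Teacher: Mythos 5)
Your proof is correct and is essentially the argument the paper intends: the corollary is stated without proof as an immediate consequence of Theorem~\ref{th:dual-Gabor-time} via the observation that an $A$-tight frame is dual to its own $A^{-1}$-scaling, and your computation $s_{g,A^{-1}g,\alpha}=A^{-1}s_{g,g,\alpha}$ is exactly the right reduction. One small caveat: your fallback for obtaining the Bessel property in the converse direction from the Walnut representation is circular, since Theorem~\ref{th:Gabor-walnut-dense} itself presupposes the Bessel property, so the honest reading is your first alternative, namely that the Bessel hypothesis of Theorem~\ref{th:dual-Gabor-time} is carried over implicitly in the corollary.
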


\begin{example}
\label{exa:parseval-LG-being-G}
  Let $g \in L^2(G)$ and consider $\{E_{\gamma}T_{\lambda}g\}_{\gamma\in\ghat,\lambda\in \LL}$, where $(\Lambda,\Sigma_\Lambda,\mu_\Lambda)$ be an admissible measure space in $G$. By Corollary~\ref{th:dual-Gabor-time-tight} we see that  
$\{E_{\gamma}T_{\lambda}g\}_{\gamma\in\ghat,\lambda\in \LL}$ is a Parseval frame for $L^2(G)$ if, and only if, for a.e. $x\in G$
\begin{equation}
\int_\LL \abs{g(x-\lambda)}^2 \, d\mu_\LL(\lambda) =  1.
\label{eq:LG-being-G-parseval}
\end{equation}
If we take $\LL=G$ with the Haar measure, then equation~(\ref{eq:LG-being-G-parseval}) becomes simply $\norm{g}=1$ which is the well-known inversion formula for the short-time Fourier transform \cite{MR1601095,MR1843717}.

Suppose now that $G$ contains a uniform lattice. Take $\LL$ as a uniform lattice in $G$, and let $X$ denote a (relatively compact) Borel section of $\LL$ in $G$. 
Equation~(\ref{eq:LG-being-G-parseval}) becomes
\[ 
 \sum_{\lambda \in \LL} \abs{g(x-\lambda)}^2 =  \card{X}^{-1}.
\] 
Let $g_1, \dots , g_r \in L^2(G)$ be functions positive on $X$ with support $\supp g_i \subset \overline{X}$ so that $g_i$ is constant on $X$ for at least one index $i$. 
Following \cite{OleSaySon2014}, the function on $G$ defined by the $r$-fold convolution
\[ 
W_r := g_1 \mathds{1}_X \ast g_2 \mathds{1}_X \ast \ldots \ast g_r \mathds{1}_X 
\] 
is called a weighted B-spline of order $r$. As shown in \cite{OleSaySon2014}, the function $W_r$ is non-negative and satisfies a partition of unity condition up to a constant, say $\sum_{\lambda \in \LL} W_r(x-\lambda) = C_r$. Take $g \in L^2(G)$ so that \[
\abs{g(x)}^2=\frac{1}{C_r \card{X}} W_r(x) , \quad \text{\eg} \quad g(x)=\frac{1}{(C_r \card{X})^{1/2}} \sqrt{W_r(x)} .
\]
 Then  $\{E_{\gamma}T_{\lambda}g\}_{\gamma\in\ghat,\lambda\in \LL}$ is a Parseval frame.
\end{example}

Viewing Gabor systems as unitarily equivalent to $\set{T_{\lambda} E_{\gamma}
  g}_{\gamma\in\LG,\lambda\in \LL}$, we arrive at characterizing equations for duality in the
frequency domain.

\begin{theorem}[$\!\!$\cite{JakobsenReproducing2014}] 
\label{th:dual-Gabor} 
Let $\Lambda$ be a closed, co-compact subgroup of $G$, and let $(\Gamma,\Sigma_{\Gamma},\mu_{\Gamma})$ be an admissible measure space in $\ghat$. Suppose that
the two systems $\{E_{\gamma}T_{\lambda}g\}_{\gamma\in\Gamma, \lambda\in\Lambda}$ and
$\{E_{\gamma}T_{\lambda}h\}_{\gamma\in\Gamma, \lambda\in\Lambda}$ are Bessel systems. Then the
following statements are equivalent:
\begin{enumerate}[(i)]
\item $\{E_{\gamma}T_{\lambda}g\}_{\gamma\in\Gamma, \lambda\in\Lambda}$ and
  $\{E_{\gamma}T_{\lambda}h\}_{\gamma\in\Gamma, \lambda\in\Lambda}$ are dual frames for $L^2(G)$,
 \item for each $\beta \in \Lambda^{\perp}$ we have
\begin{equation}
t_\beta(\omega):= \int_{\Gamma} \overline{\hat{g}(\omega\gamma^{-1}\beta^{-1})} \hat{h}(\omega\gamma^{-1}) \,
d\mu_{\Gamma}(\gamma) = \delta_{\beta,1} \quad \almoste\ \omega \in \ghat.
\label{eq:def-t-alpha}
\end{equation}
\end{enumerate}
\end{theorem}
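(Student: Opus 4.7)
The plan is to reduce Theorem \ref{th:dual-Gabor} to the already-established time-domain version, Theorem \ref{th:dual-Gabor-time}, by applying the Fourier transform. Since $\mathcal{F}$ is a unitary isomorphism $L^2(G) \to L^2(\ghat)$, it preserves the Bessel property, the frame property, and the bilinear duality identity \eqref{eq:cont-dual-weak}. Therefore, the two systems $\{E_{\gamma}T_{\lambda}g\}$ and $\{E_{\gamma}T_{\lambda}h\}$ in $L^2(G)$ are dual frames if, and only if, their Fourier transforms form dual frames in $L^2(\ghat)$.

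First I would use the commutation relations $\mathcal{F} T_\lambda = E_{\lambda^{-1}}\mathcal{F}$ and $\mathcal{F} E_\gamma = T_\gamma \mathcal{F}$, together with $T_\gamma E_{\lambda^{-1}} = \gamma(\lambda)\, E_{\lambda^{-1}} T_\gamma$, to obtain
\[
\mathcal{F}\bigl(E_\gamma T_\lambda g\bigr) \;=\; \gamma(\lambda)\, E_{\lambda^{-1}} T_\gamma \hat{g},
\]
and the analogous identity with $h$ in place of $g$. The unimodular phase factor $\gamma(\lambda)$ is the same in both expressions, so it drops out of any product of the form $\innerprod{\cdot}{\mathcal{F}(E_\gamma T_\lambda h)}\innerprod{\mathcal{F}(E_\gamma T_\lambda g)}{\cdot}$. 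Hence duality of the original systems is equivalent to duality of $\{E_{\lambda^{-1}} T_\gamma \hat{g}\}_{\gamma \in \Gamma, \lambda \in \Lambda}$ and $\{E_{\lambda^{-1}} T_\gamma \hat{h}\}_{\gamma \in \Gamma, \lambda \in \Lambda}$ in $L^2(\ghat)$.

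Next, I would view these as Gabor systems on the LCA group $\ghat$, with translation indices in the admissible measure space $\Gamma \subseteq \ghat$ and modulation characters in $\Lambda^{-1} \subseteq \ghhat$. Via Pontryagin duality $\ghhat \cong G$, the subgroup $\Lambda^{-1}$ is closed and co-compact precisely because $\Lambda$ is. Thus the hypotheses of Theorem \ref{th:dual-Gabor-time} (with the roles of $G$ and $\ghat$ exchanged) are satisfied, and it yields the characterizing equations
\[
\int_\Gamma \overline{\hat{g}(\omega\gamma^{-1}\beta^{-1})}\,\hat{h}(\omega\gamma^{-1})\, d\mu_\Gamma(\gamma) \;=\; \delta_{\beta,1} \quad \text{a.e. } \omega \in \ghat,
\]
indexed by $\beta \in \anni(\ghat, \Lambda^{-1})$. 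Finally, a short computation gives $\anni(\ghat,\Lambda^{-1}) = \Lambda^\perp$: indeed, $\lambda^{-1}(\omega) = \overline{\omega(\lambda)}$ equals $1$ if, and only if, $\omega(\lambda)=1$, so both annihilators are the same subgroup of $\ghat$.

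The argument is almost entirely formal, with the main points requiring care being (a) tracking the unimodular factor $\gamma(\lambda)$ produced by swapping $T_\gamma$ and $E_{\lambda^{-1}}$, and verifying that it does cancel in the bilinear duality relation, and (b) the identification of $\Lambda^{-1} \subset \ghhat$ with $\Lambda \subset G$ under Pontryagin duality so that the closed/co-compact hypothesis transfers correctly to the Fourier side. Once these are handled, Theorem \ref{th:dual-Gabor-time} applies verbatim and produces the claimed frequency-domain condition \eqref{eq:def-t-alpha}.
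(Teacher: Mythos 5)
Your proposal is correct and follows essentially the same route as the paper, which obtains Theorem~\ref{th:dual-Gabor} by viewing $\gaborset$ as unitarily equivalent (up to the unimodular factor $\gamma(\lambda)$) to $\set{T_\lambda E_\gamma g}_{\gamma\in\LG,\lambda\in\LL}$ and then invoking the characterizing equations of \cite{JakobsenReproducing2014} on the Fourier side, i.e.\ the frequency-domain analogue of Theorem~\ref{th:dual-Gabor-time}. Your added care about the phase cancellation and the identification $\anni(\ghat,\Lambda^{-1})=\Lambda^\perp$ just makes explicit what the paper leaves implicit.
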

As for $s_{g,h,\alpha}$ we write $t_{g,h,\beta}:\ghat \to \C$ for $t_\beta$ in
\eqref{eq:def-t-alpha} if we want to stress the dependence of the generators $g$ and $h$.

\begin{corollary}
\label{th:dual-Gabor-freq-tight} 
Let $\Lambda$ be a closed, co-compact subgroup of $G$, and let $(\Gamma,\Sigma_{\Gamma},\mu_{\Gamma})$ be an admissible measure space in $\ghat$.  The family
$\{E_{\gamma}T_{\lambda}g\}_{\gamma\in\Gamma, \lambda\in\Lambda}$ is an $A$-tight frame for $L^2(G)$
if and only if $t_{g,g,\beta}(x)= A\,\delta_{\beta,1}$ a.e. for each $\beta \in \LL^{\perp}$.
\end{corollary}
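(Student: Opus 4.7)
The plan is to derive Corollary~\ref{th:dual-Gabor-freq-tight} from Theorem~\ref{th:dual-Gabor} by specializing to the pair of generators $(g,h) = (g, A^{-1}g)$. The key principle is that a Bessel family $\{f_k\}$ in a Hilbert space is an $A$-tight frame if, and only if, $\{A^{-1}f_k\}$ is a dual frame of $\{f_k\}$; indeed, both conditions are equivalent to the frame operator equalling $A \cdot I_\cH$. Applied here, the rescaled system $\{A^{-1}E_\gamma T_\lambda g\}$ coincides with the Gabor system $\{E_\gamma T_\lambda (A^{-1}g)\}$. Furthermore, since $\hat h$ enters linearly in \eqref{eq:def-t-alpha}, we have $t_{g, A^{-1}g, \beta}(\omega) = A^{-1}\, t_{g,g,\beta}(\omega)$ a.e.

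For the forward implication, I would argue as follows: if $\{E_\gamma T_\lambda g\}$ is $A$-tight, then it is in particular Bessel, and so is $\{E_\gamma T_\lambda(A^{-1}g)\}$. By the tight-frame/dual equivalence these two Bessel systems form a dual pair, so Theorem~\ref{th:dual-Gabor} yields $t_{g, A^{-1}g, \beta}(\omega) = \delta_{\beta,1}$ a.e.\ for each $\beta \in \Lambda^\perp$, which rearranges to $t_{g,g,\beta}(\omega) = A\,\delta_{\beta,1}$.

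For the reverse implication the main obstacle is establishing the Bessel property \emph{before} Theorem~\ref{th:dual-Gabor} can be invoked. I would proceed via Proposition~\ref{thm:char-frame-prop-LL-Gabor}: it suffices to show that, for a.e.\ $\omega \in \Omega$, the fiber family $\{(\hat g(\omega\gamma\beta))_{\beta \in \Lambda^\perp}\}_{\gamma \in \Gamma}$ is an $A$-tight frame for $\ell^2(\Lambda^\perp)$. For finitely supported $c \in \ell^2(\Lambda^\perp)$, expand $\int_\Gamma |\langle c, (\hat g(\omega\gamma\beta))_\beta\rangle|^2\, d\mu_\Gamma(\gamma)$ into a double sum over $\beta,\beta'$, interchange sum and integral, and apply the substitution $\gamma \mapsto \gamma^{-1}$ together with translation invariance of $\mu_\Gamma$; the inner integrals then take the form $t_{g,g,\beta'\beta^{-1}}(\omega\beta')$. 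The hypothesis annihilates all off-diagonal terms and collapses the expression to $A\|c\|^2_{\ell^2}$, yielding simultaneously the Bessel bound and the $A$-tightness of the fibers, hence of $\{E_\gamma T_\lambda g\}$. Equivalently, once the Bessel property is in hand, applying Theorem~\ref{th:dual-Gabor} directly to $(g, A^{-1}g)$ delivers the dual-frame identity and hence the $A$-tightness.
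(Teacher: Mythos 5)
Your proposal is correct, and its core---specializing Theorem~\ref{th:dual-Gabor} to the pair $(g,h)=(g,A^{-1}g)$ and using that $A$-tightness of a Bessel family is equivalent to $\{f_k\}$ and $\{A^{-1}f_k\}$ being dual frames---is precisely the argument the paper leaves implicit (the corollary is stated without proof as an immediate consequence of Theorem~\ref{th:dual-Gabor}, just as Corollary~\ref{th:dual-Gabor-time-tight} follows from Theorem~\ref{th:dual-Gabor-time}). Where you go beyond the paper is in the reverse implication: you correctly observe that Theorem~\ref{th:dual-Gabor} carries a standing Bessel hypothesis which the corollary's statement does not, and you close this gap by a direct fiber computation via Proposition~\ref{thm:char-frame-prop-LL-Gabor}, expanding $\int_\Gamma\abs{\innerprods{c}{\seq{\hat g(\omega\gamma\beta)}_\beta}}^2\,d\mu_\Gamma(\gamma)$ for finitely supported $c$ and identifying the cross terms as $t_{g,g,\beta'\beta^{-1}}(\omega\beta')$, which the hypothesis annihilates off the diagonal. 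This is a genuine and worthwhile addition: the $\beta=1$ case of the hypothesis gives $\int_\Gamma\abs{\hat g(\omega\gamma^{-1})}^2\,d\gamma=A$ a.e., which justifies (via Cauchy--Schwarz) the absolute convergence needed to interchange the finite sum and the integral, and since $\Lambda^\perp$ is countable the exceptional null sets over all pairs $(\beta,\beta')$ can be discarded at once; the resulting $A$-tightness of the fibers for a.e.\ $\omega$ then yields both the Bessel bound and the tight frame property simultaneously. In short, your forward direction reproduces the paper's intended one-line argument, while your reverse direction supplies a self-contained justification that the paper (and its citation of \cite{JakobsenReproducing2014}) glosses over.
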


Let us now consider co-compact Gabor systems, i.e., we take both $\Lambda$ and $\Gamma$ to be closed, co-compact
subgroups. We first remark that in this case, under the Bessel system assumption, we have
equivalence of conditions (\ref{eq:def-s-alpha}) and (\ref{eq:def-t-alpha}). More importantly,
$s_{g,h,\alpha}$ and $t_{g,h,\beta}$ can be written as a Fourier series. 
\begin{remark} \label{rem:t-alpha-unif-bounded-and-fourier-series} 
  \begin{enumerate}[(i)]
\item     For $g,h\in L^2(G)$ assume that two co-compact Gabor systems
    $\gaborset$ and $\gaborset[h]$ are Bessel systems with bounds
    $B_g$ and $B_h$, respectively. By an application of
    Cauchy-Schwarz' inequality and
    \cite[Proposition~3.3]{JakobsenReproducing2014}, we see that
    $s_{g,h,\alpha} \in L^\infty(G)$; to be precise:
    \[
    \abs{s_{g,h,\alpha}(x)} \le B_g^{1/2} B_h^{1/2} \quad \text{for
      a.e. } x \in G.
    \]
  \item Note that $s_{g,h,\alpha}:G\to \C$ is
    $\Lambda$-periodic. Furthermore, $G/\Lambda$ is compact and
    $s_{\alpha}$ is uniformly bounded, we can therefore consider
    $s_{g,h,\alpha}$ as a function in $L^2(G/\Lambda)$ and its Fourier
    series is given by
    \[
    s_{g,h,\alpha}(x)= \sum_{\beta \in \LL^\perp} c_{\alpha,\beta}
    \beta(x) \quad \text{with } c_{\alpha,\beta} = \int_{G/\Lambda}
    s_{g,h,\alpha} (\dot{x}) \overline{\beta(\dot{x})} \,
    d\dot{x}. 
    \]
    We can compute the Fourier coefficients $c_{\alpha,\beta}$
    directly using Weil's formula:
    \begin{align}
      c_{\alpha,\beta} & = \int_{G/\Lambda} s_{g,h,\alpha} (\dot x) \overline{\beta(\dot x)} \, d\dot x = \int_{G/\Lambda} \int_{\Lambda} \overline{g(x-\lambda-\alpha)} \, h(x-\lambda) \, \overline{\beta(x-\lambda)} \, d\lambda  \, d\dot x \nonumber \\
      & = \int_{G} h(x) \, \overline{\beta(x) g(x-\alpha)} \, dx =
      \langle h, E_{\beta} T_{\alpha}
      g\rangle. \label{eq:s-Fourier-coeff}
    \end{align}
  \item Similarly, we find $t_{g,h,\beta}(\omega) =
    \sum_{\alpha\in\Gamma^{\perp}} \langle \hat h, E_{\alpha}T_{\beta}
    \hat g \rangle \, \omega(\alpha)$.
  \end{enumerate}
\end{remark}

\section{The frame operator of Gabor systems}
\label{sec:Gabor-frame-operator}

Let us begin with the definition of the frame operator. Let $g\in L^2(G)$, and let $\Lambda\subset G$, $\Gamma\subset \ghat$ be closed subgroups. If $\gaborset$ is a Bessel system, the frame operator introduced in (\ref{eq:def-frame-op-in-H}) reads:
\[
S \equiv S_{g,g} : L^2(G)\to L^2(G), \quad S = \int_{\Gamma} \int_{\Lambda} \langle \, \cdot \, , \gabor
\rangle \gabor \, d\lambda \, d\gamma,
 \] 
given weakly by
\begin{align*}
\langle S f_1,f_2 \rangle & = \int_{\Gamma} \int_{\Lambda} \langle f_1, \gabor\rangle\langle
\gabor,f_2\rangle \, d\lambda \, d\gamma \quad \forall f_1,f_2\in L^2(G).
\end{align*}
Similarly, for two Gabor Bessel systems generated by the functions $g,h\in L^2(G)$, we introduce the
operator
\begin{equation}
S_{g,h} : L^2(G)\to L^2(G), \quad S_{g,h} = \int_{\Gamma} \int_{\Lambda} \langle \, \cdot \, , \gabor \rangle \gabor[h] \,  d\lambda \, d\gamma.
\label{eq:def-Gabor-frame-op-2-gen}
\end{equation}
We follow the Gabor theory tradition, referring to this operator as a (mixed) frame operator.  If we want to
emphasize the role of $\Lambda$ and $\Gamma$, we denote this operator $S_{g,h,\Lambda,\Gamma}$, where $\Lambda$ specifies the translation subgroup and $\Gamma$ the modulation subgroup. 

As in Gabor theory on $L^2(\R^n)$, it is straightforward to show that the frame operator commutes with time-frequency shifts with respect to the groups $\Lambda$ and $\Gamma$. 
\begin{lemma}
\label{thm:frame-op-commutes-time-freq}
Suppose that $\Gamma$ and $\Lambda$ are closed subgroups. 
Let $g,h\in L^2(G)$ and let $\gaborset$, $\gaborset[h]$ be Bessel
systems. Then, for all $\gamma \in \Gamma$ and $\lambda \in \Lambda$, the following holds:
\begin{enumerate}[(i)]
\item $S_{g,h} E_{\gamma}T_{\lambda} = E_{\gamma}T_{\lambda} S_{g,h}$,
\item If $\gaborset$ is a frame, then
\[ S^{-1} E_{\gamma}T_{\lambda} = E_{\gamma}T_{\lambda} S^{-1}. \]\end{enumerate}
\end{lemma}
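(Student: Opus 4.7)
The plan is to prove (i) by a direct weak computation using the commutator relation $T_aE_\chi = \overline{\chi(a)}\,E_\chi T_a$ together with translation invariance of Haar measure on the closed subgroups $\Gamma$ and $\Lambda$, and then deduce (ii) by conjugating with $S^{-1}$. Fix $\gamma_0\in\Gamma$, $\lambda_0\in\Lambda$ and arbitrary $f_1,f_2\in L^2(G)$, and start from
\[
\langle S_{g,h}E_{\gamma_0}T_{\lambda_0}f_1,f_2\rangle=\int_\Gamma\int_\Lambda \langle E_{\gamma_0}T_{\lambda_0}f_1,E_\gamma T_\lambda g\rangle\,\langle E_\gamma T_\lambda h,f_2\rangle\,d\lambda\,d\gamma.
\]

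The first slot will be rewritten using unitarity of $E_{\gamma_0}T_{\lambda_0}$ and two applications of the commutator relation, yielding
\[
\langle E_{\gamma_0}T_{\lambda_0}f_1,E_\gamma T_\lambda g\rangle=\overline{(\gamma_0^{-1}\gamma)(\lambda_0)}\,\langle f_1,E_{\gamma_0^{-1}\gamma}T_{\lambda-\lambda_0}g\rangle.
\]
Factoring $E_\gamma T_\lambda = E_{\gamma_0}E_{\gamma_0^{-1}\gamma}T_{\lambda_0}T_{\lambda-\lambda_0}$ and commuting $E_{\gamma_0^{-1}\gamma}$ past $T_{\lambda_0}$ will similarly give
\[
\langle E_\gamma T_\lambda h,f_2\rangle=(\gamma_0^{-1}\gamma)(\lambda_0)\,\langle E_{\gamma_0}T_{\lambda_0}E_{\gamma_0^{-1}\gamma}T_{\lambda-\lambda_0}h,f_2\rangle.
\]
Since characters take values in $\mathbb{T}$, the two phase factors multiply to $1$. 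I will then substitute $\gamma\mapsto \gamma_0\gamma$ and $\lambda\mapsto \lambda_0+\lambda$; this is legitimate precisely because $\gamma_0\in\Gamma$ and $\lambda_0\in\Lambda$, so translation invariance of the Haar measures on $\Gamma$ and $\Lambda$ applies. The integrand becomes $\langle f_1,E_\gamma T_\lambda g\rangle\,\langle E_{\gamma_0}T_{\lambda_0}E_\gamma T_\lambda h,f_2\rangle$, and pulling $E_{\gamma_0}T_{\lambda_0}$ out of the second inner product as $(E_{\gamma_0}T_{\lambda_0})^{-1}$ on $f_2$ identifies the expression as $\langle E_{\gamma_0}T_{\lambda_0}S_{g,h}f_1,f_2\rangle$. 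Arbitrariness of $f_1,f_2$ then gives (i).

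For (ii), if $\gaborset$ is a frame then $S=S_{g,g}$ is a bounded invertible operator, and applying $S^{-1}$ on both sides of the identity $SE_\gamma T_\lambda=E_\gamma T_\lambda S$ from (i) (with $h=g$) yields $E_\gamma T_\lambda S^{-1}=S^{-1}E_\gamma T_\lambda$. The main obstacle in (i) is the careful bookkeeping of the two phase factors produced by the commutator relation and the verification that they cancel; beyond this the only subtle point is that the change of variables \emph{only} works for $\gamma_0\in\Gamma$ and $\lambda_0\in\Lambda$, which is exactly the statement's hypothesis and explains why $E_\gamma T_\lambda$ must range over the chosen subgroups rather than over all of $\widehat{G}\times G$.
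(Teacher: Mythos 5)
Your argument is correct: the phase factors $\overline{(\gamma_0^{-1}\gamma)(\lambda_0)}$ and $(\gamma_0^{-1}\gamma)(\lambda_0)$ do cancel, the change of variables is justified exactly by $\gamma_0\in\Gamma$, $\lambda_0\in\Lambda$ and translation invariance of the Haar measures, and unitarity of $E_{\gamma_0}T_{\lambda_0}$ lets you move it onto $f_2$ and back, while (ii) follows from (i) by conjugation with the bounded invertible $S$. The paper states this lemma without proof, calling it straightforward as in the $L^2(\R^n)$ case; your computation is precisely the standard argument being alluded to, so there is nothing to compare beyond noting that you have supplied the omitted details correctly.
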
 

Lemma~\ref{thm:frame-op-commutes-time-freq} implies that the canonical
dual of a Gabor frame again is a Gabor system of the form $\gaborset[h]$, where $h=S^{-1}g$. Finally, we note that by a direct application of the Plancherel theorem, one can show that for all $f_1,f_2\in L^2(G)$,
  \[
  \langle  S_{g,h,\Lambda,\Gamma} f_1,f_2\rangle = \langle S_{\hat g,\hat h,\Gamma,\Lambda}
  \hat{f}_1,\hat{f}_2\rangle ,
  \]
where $\LL$ and $\LG$ are only assumed to be measure spaces.

\subsection{Feichtinger's algebra}
\label{sec:feichtingers-algebra}

In applications of our results, one often needs to show that the Gabor
system $\gaborset$ generated by $g\in L^2(G)$ constitutes a Bessel
family. This task, however, can be non-trivial, and even if $g$
generates a Bessel system for subgroups $\Lambda_1$ and $\Gamma_1$, it
may not generate a Bessel system for another pair of translation and
modulation groups $\Lambda_2$ and $\Gamma_2$. A solution to this
problem is to consider functions in the \emph{Feichtinger algebra}
$S_0(G)$.  It follows from \cite[Theorem 3.3.1]{MR1601091} that Gabor
systems $\gaborset$ with respect to any two uniform lattices $\Lambda$ and
$\Gamma$ in $\R^n$ generated by functions in $S_0(\R^n)$ are Bessel
systems. The proof relies on properties of the Wiener-Amalgam spaces. The purpose of this section is to give an alternate proof in the setting of LCA groups that any $g\in
S_0(G)$ generates a Bessel system $\gaborset$ for any two closed subgroups
$\Lambda\subset G$ and $\Gamma\subset \ghat$. 

Let $g\in C_c(G)$ be a non-zero function with $\mathcal F g \in L^1(\ghat)$. The Feichtinger
algebra $S_0(G)$ is then defined as follows:
\[ S_0(G) := \big\{ f: G\to \C \, : \, f\in L^1(G) \ \text{and} \
\int_{G} \int_{\ghat} \vert \mathcal V_g f(x,\omega) \vert \, d\omega
\, dx < \infty \big\},
\]
where $\mathcal V_g f(x,\omega) := \int_G f(t) \overline{\omega(t) g(t-x)} \, dt$ is the short time
Fourier transform of $f$ with the window $g$. Equipped with the norm
$\Vert f \Vert_{S_0} := \int_{G\times \ghat} \vert \mathcal V_g
f(x,\omega) \vert  d\omega dx$, the function space $S_0(G)$ is a Fourier-invariant Banach space that is dense in
$L^2(G)$ and whose members are continuous and integrable functions.
Moreover, $S_{0}(G)$ is continuously embedded in $L^{1}(G)$, that is, there
exists a constant $C>0$ such that
\[ 
\Vert f \Vert_{L^{1}(G)} \le C \Vert f \Vert_{S_{0}(G)} \quad \text{for all $f\in S_0(G)$.} 
\] 
If $g,h\in S_0(G)$, then $\mathcal V_g h \in S_0(G\times \ghat)$. Furthermore, for any closed subgroup $H\subset G$ the \emph{restriction
  mapping}
\[ 
\mathcal R_H: S_0(G) \to S_0(H), \ (\mathcal R_H f)(x) := f(x), \ \ x\in H
\]
is a surjective, bounded and linear operator. We refer the reader to
\cite{MR1601107,MR643206,MR1601091} for a detailed introduction to $S_0(G)$. 

In order to prove Theorem \ref{thm:feichtinger-implies-bessel}, we need the following two results. Lemma \ref{le:bounded-subgroup-integral-over-S0-function} relies on properties (ii) and (iv) from above, whereas Lemma \ref{le:lemma22-for-S0-gabor} is an adaptation of \cite[Lemma
2.2]{JakobsenReproducing2014}.

\begin{lemma} \label{le:bounded-subgroup-integral-over-S0-function} Let $H$ be a closed subgroup in $G$ and let $a\in G$, $g\in S_0(G)$. Then there exists some constant $K_H>0$ which depends on $H$ such that
\[ \int_{H} \vert g(x-a) \vert \, d\mu_{H}(x) \le K_H \, \Vert g \Vert_{S_0(G)} \ \ \text{for all } a\in G.\]
\end{lemma}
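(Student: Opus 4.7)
The plan is to combine properties (ii) and (iv) with the translation invariance of the $S_0$-norm. Concretely, I would rewrite the left-hand side as an $L^1(H)$-norm of a restriction, then chain three bounded operations: restriction, embedding, and translation.

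First, observe that $\int_H |g(x-a)| \, d\mu_H(x) = \int_H |(T_a g)(x)| \, d\mu_H(x) = \|\mathcal{R}_H (T_a g)\|_{L^1(H)}$, where the integral makes sense because $T_a g \in S_0(G) \subset C(G) \cap L^1(G)$, hence its restriction to $H$ lies in $S_0(H)$ by property (iv). Next, applying the continuous embedding $S_0(H) \hookrightarrow L^1(H)$ from property (ii) (with constant $C_H$) and the boundedness of the restriction operator $\mathcal{R}_H: S_0(G) \to S_0(H)$ (with operator norm $\|\mathcal{R}_H\|$), I obtain
\[
\|\mathcal{R}_H(T_a g)\|_{L^1(H)} \le C_H \, \|\mathcal{R}_H (T_a g)\|_{S_0(H)} \le C_H \, \|\mathcal{R}_H\| \, \|T_a g\|_{S_0(G)}.
\]

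The final step is to eliminate the dependence on $a$ via the translation invariance of the $S_0$-norm, i.e., $\|T_a g\|_{S_0(G)} = \|g\|_{S_0(G)}$ for all $a \in G$. This is a standard property of Feichtinger's algebra (it follows immediately from the covariance identity $\mathcal{V}_g(T_a f)(x,\omega) = \overline{\omega(a)} \, \mathcal{V}_g f(x-a,\omega)$ together with translation invariance of $\mu_G$), and is folklore among the properties cited from \cite{MR1601107,MR643206,MR1601091}. Setting $K_H := C_H \|\mathcal{R}_H\|$ then yields the claimed inequality uniformly in $a$.

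The main potential obstacle is the need for translation invariance of the $S_0$-norm, which is not explicitly listed as property (i)--(iv) in the excerpt; however, it is a well-known consequence of the definition via the STFT and can be invoked directly. No estimates depend delicately on the choice of window $g$ used to define the $S_0$-norm, so the constant $K_H$ depends only on $H$ (through $C_H$ and $\|\mathcal{R}_H\|$), as required.
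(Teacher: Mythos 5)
Your proof is correct and follows essentially the same route as the paper's: rewrite the integral as $\Vert \mathcal R_H(T_a g)\Vert_{L^1(H)}$, chain the continuous embedding $S_0(H)\hookrightarrow L^1(H)$ with the boundedness of the restriction map $\mathcal R_H: S_0(G)\to S_0(H)$, and conclude by translation invariance of the $S_0$-norm. The only difference is notational (your $K_H = C_H\Vert\mathcal R_H\Vert$ versus the paper's $K_H = CC_H$).
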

\begin{proof} The result follows from the fact that $S_0(H)$ is continuously embedded in $L^1(H)$ and the boundedness of the restriction mapping: 
\begin{align*}
 \int_{H} \vert g(x-a) \vert \, d\mu_{H}(x) & = \Vert \mathcal R_H (T_a g) \Vert_{L^1(H)} \le C \, \Vert \mathcal R_H (T_a g) \Vert_{S_0(H)} \\ & \le C C_H \, \Vert T_a g \Vert_{S_0(G)} = C C_H \, \Vert g \Vert_{S_0(G)}.
\end{align*}
Here we also used that the $S_0$-norm is invariant under translation. Now take $K_H = CC_H$.
\end{proof}

\begin{lemma} \label{le:lemma22-for-S0-gabor}
Let $g\in L^2(G)$ and $\Gamma\subset \ghat$ be a closed subgroup. For all $f\in C_c(G)$
\begin{equation} \label{eq:lemma22-for-S0-gabor} \int_{\Gamma} \vert \langle f, E_{\gamma} T_{\lambda} g\rangle \vert^2 \, d\mu_{\Gamma}(\gamma) = \int_G \int_{\Gamma^{\perp}} f(x) \overline{f(x-\alpha)} \, \overline{T_{\lambda}g(x)} \, T_{\lambda}g(x-\alpha)\, d\mu_{\Gamma^{\perp}}(\alpha) \, d\mu_G(x).
\end{equation} 
\end{lemma}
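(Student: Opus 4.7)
The plan is to recognize the left-hand side of \eqref{eq:lemma22-for-S0-gabor} as an integral of the squared modulus of a Fourier transform restricted to the subgroup $\Gamma$, and then to reduce this to an integral of a convolution periodized along $\Gamma^{\perp}$ via the Poisson/Weil formula.

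Concretely, set $F(x) = f(x)\overline{T_\lambda g(x)}$. Since $f\in C_c(G)$ is bounded with compact support $K$, the function $F$ is compactly supported and lies in $L^1(G)\cap L^2(G)$, with $\|F\|_{L^1}\le \|f\|_\infty\|T_\lambda g\|_{L^2(K)}\cdot \mu_G(K)^{1/2}$ by Cauchy-Schwarz. A direct computation gives
\[
\langle f, E_\gamma T_\lambda g\rangle = \int_G F(x)\overline{\gamma(x)}\,d\mu_G(x) = \widehat{F}(\gamma), \qquad \gamma\in\Gamma,
\]
so the left-hand side of \eqref{eq:lemma22-for-S0-gabor} equals $\int_\Gamma |\widehat F(\gamma)|^2\, d\mu_\Gamma(\gamma)$.

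Next, define $\tilde F(x):=\overline{F(-x)}$ and $H := F * \tilde F$. A short calculation with characters shows $\widehat{\tilde F}(\gamma) = \overline{\widehat F(\gamma)}$, hence $\widehat H = |\widehat F|^{\,2}$ on all of $\ghat$. Moreover $H\in L^1(G)$ with compact support $K-K$, and it is continuous. The key step is Poisson's formula with respect to the closed subgroup $\Gamma^\perp\subset G$: the periodization $PH(\dot x)=\int_{\Gamma^\perp}H(x+\alpha)\,d\mu_{\Gamma^\perp}(\alpha)$ is a well-defined continuous function on the compact (or finite) domain $G/\Gamma^\perp$ (for compactly supported $H$ this integral reduces to a compact piece), whose Fourier coefficients at $\gamma\in \widehat{G/\Gamma^\perp}\cong\Gamma$ coincide with $\widehat H(\gamma)$. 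With dual measures, Fourier inversion at $\dot x=\dot 0$ gives
\[
\int_\Gamma \widehat H(\gamma)\,d\mu_\Gamma(\gamma) = PH(\dot 0) = \int_{\Gamma^\perp} H(\alpha)\,d\mu_{\Gamma^\perp}(\alpha).
\]
Unfolding $H(\alpha) = \int_G F(y)\overline{F(y-\alpha)}\,d\mu_G(y)$ and substituting $F=f\,\overline{T_\lambda g}$ yields the right-hand side of \eqref{eq:lemma22-for-S0-gabor}, after an application of Fubini to swap the order of integration.

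The main point requiring care is the application of Poisson's formula and of Fubini. Since $F$ has compact support $K$, the integrand $f(y)\overline{f(y-\alpha)}\,\overline{T_\lambda g(y)}\,T_\lambda g(y-\alpha)$ vanishes outside the compact set $\{(y,\alpha): y\in K,\ \alpha\in K-y\}$, and on this set its absolute value is bounded by $\|f\|_\infty^2 |T_\lambda g(y) T_\lambda g(y-\alpha)|$, which is integrable by Cauchy-Schwarz. This gives the joint $L^1$-bound needed for Fubini, while the compact support of $H$ ensures the periodization $PH$ is a legitimate $L^1(G/\Gamma^\perp)$-function so that Poisson's formula applies without further decay assumptions.
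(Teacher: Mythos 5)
The paper itself contains no proof of this lemma; it is presented as an adaptation of Lemma~2.2 of the reference \cite{JakobsenReproducing2014}, so there is nothing internal to compare against line by line. Your approach is the natural one and matches the mechanism the authors use around this lemma (e.g.\ in the proof of Theorem~\ref{thm:feichtinger-implies-bessel}, where the identity is immediately followed by a Weil-formula periodization over $G/\Gamma^{\perp}$): you identify $\langle f, E_{\gamma}T_{\lambda}g\rangle=\widehat{F}(\gamma)$ for $F=f\,\overline{T_{\lambda}g}\in L^1(G)\cap L^2(G)$ with compact support, pass to the autocorrelation $H=F*\tilde F$ with $\widehat H=|\widehat F|^2$, and convert $\int_{\Gamma}\widehat H\,d\mu_{\Gamma}$ into $\int_{\Gamma^{\perp}}H\,d\mu_{\Gamma^{\perp}}$ by periodizing over $\Gamma^{\perp}$ and evaluating at the identity coset. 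The identification $\widehat{PH}(\gamma)=\widehat H(\gamma)$ for $\gamma\in\Gamma\cong\widehat{G/\Gamma^{\perp}}$ and the final Fubini step are correctly justified by the compact support of $H$ (so that the $\alpha$-integration runs over the compact set $(K-K)\cap\Gamma^{\perp}$).

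Two points need repair. First, a minor one: $G/\Gamma^{\perp}$ is compact only when $\Gamma$ is discrete, which is not assumed here, so the parenthetical about the ``compact (or finite) domain $G/\Gamma^{\perp}$'' is wrong; what you actually use, and what is true, is that the periodization integral over $\Gamma^{\perp}$ localizes to a compact piece of $\Gamma^{\perp}$. Second, and more substantively: the pointwise inversion $PH(\dot 0)=\int_{\Gamma}\widehat{PH}(\gamma)\,d\mu_{\Gamma}(\gamma)$ requires $\widehat{PH}\in L^1(\Gamma)$, but $\int_{\Gamma}|\widehat{PH}|\,d\mu_{\Gamma}=\int_{\Gamma}|\widehat F(\gamma)|^2\,d\mu_{\Gamma}(\gamma)$ is exactly the left-hand side whose finiteness you are in the process of establishing, so invoking the plain inversion theorem is circular. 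The step is salvaged by the sharper classical fact that a function which is integrable, continuous at the identity, and has \emph{non-negative} Fourier transform automatically has integrable Fourier transform and satisfies inversion there; $PH$ qualifies since it is the periodization of the positive-definite function $F*\tilde F$ and $\widehat{PH}=|\widehat F|^2\ge 0$ on $\Gamma$. You should cite this version explicitly. Alternatively, the issue disappears entirely if one periodizes $F$ itself over $\Gamma^{\perp}$ and applies the Plancherel theorem on $G/\Gamma^{\perp}$ (an isometry needing no integrability of the transform), then expands $|PF|^2$ and applies Weil's formula once more; this yields the same right-hand side.
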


With these results in hand, we can prove that functions in $S_0(G)$ always generate Gabor Bessel systems.

\begin{theorem}
\label{thm:feichtinger-implies-bessel}
 Let $g\in S_0(G)$ and let $\Lambda\subset G$ and $\Gamma\subset \ghat$ be closed subgroups. Then $\gaborset$ is a Bessel system with bound $B=K_{\Lambda,\Gamma} \, \Vert g \Vert_{S_0(G)}^2$, where $K_{\Lambda,\Gamma}$ is a constant that only depends on $\Lambda$ and $\Gamma$.
\end{theorem}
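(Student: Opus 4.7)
The plan is to first establish the Bessel bound for test functions $f \in C_c(G)$ and then extend by density. Applying Lemma~\ref{le:lemma22-for-S0-gabor} and integrating over $\Lambda$ via Fubini (everything is absolutely integrable for $f \in C_c(G)$ and $g \in S_0(G)$) yields
\[
\int_\Lambda \int_\Gamma |\langle f, E_\gamma T_\lambda g\rangle|^2 \, d\mu_{\Gamma}(\gamma) \, d\mu_{\Lambda}(\lambda) = \int_{\Gamma^{\perp}} \int_G f(x)\, \overline{f(x-\alpha)} \, s_\alpha(x) \, d\mu_G(x) \, d\mu_{\Gamma^{\perp}}(\alpha),
\]
where $s_\alpha(x) := \int_\Lambda \overline{g(x-\lambda)} g(x-\alpha-\lambda) \, d\mu_\Lambda(\lambda)$. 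Using Cauchy-Schwarz in $x$ together with $\int_G |f(x)||f(x-\alpha)| \, dx \le \|f\|_2^2$, the problem reduces to proving
\[
\int_{\Gamma^{\perp}} \|s_\alpha\|_{\infty} \, d\mu_{\Gamma^{\perp}}(\alpha) \le K_{\Lambda,\Gamma} \, \|g\|_{S_0(G)}^2.
\]

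For the pointwise bound on $s_\alpha$, I would rewrite $s_\alpha(x) = \int_\Lambda h_\alpha(x-\lambda) \, d\mu_\Lambda(\lambda)$ where $h_\alpha := \bar g \cdot T_\alpha g$. Since $S_0(G)$ is a Banach algebra under pointwise multiplication and translation is $S_0$-isometric, $h_\alpha \in S_0(G)$ with $\|h_\alpha\|_{S_0(G)} \le C_0 \|g\|_{S_0(G)}^2$ uniformly in $\alpha$. After the substitution $\lambda \mapsto -\lambda$ (the Haar measure on the abelian group $\Lambda$ is inversion-invariant), Lemma~\ref{le:bounded-subgroup-integral-over-S0-function} applied to $h_\alpha$ with $a = -x$ gives $|s_\alpha(x)| \le K_\Lambda \|h_\alpha\|_{S_0(G)}$ uniformly in $x \in G$, hence $\|s_\alpha\|_{\infty} \le K_\Lambda \|\bar g \cdot T_\alpha g\|_{S_0(G)}$.

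The hardest step, and where I expect the main obstacle, is integrating this bound over $\alpha \in \Gamma^{\perp}$, since one must convert a uniform pointwise $\alpha$-bound into actual integrability. I would consider the function $F(\alpha, y) := \bar g(y) g(y-\alpha)$ on $G \times G$, which factors as $F = (\bar g \otimes g) \circ \Phi$ with $\Phi(\alpha, y) = (y, y-\alpha)$ a topological group automorphism of $G \times G$. The tensor product property of $S_0$ together with its invariance under pullbacks by topological group isomorphisms then yields $F \in S_0(G \times G)$ with $\|F\|_{S_0(G \times G)} \le C_1 \|g\|_{S_0(G)}^2$; restricting $F$ to the closed subgroup $\Gamma^{\perp} \times G$ preserves $S_0$. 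Finally, a projective-tensor-product decomposition $F = \sum_i f_i \otimes h_i$ combined with Lemma~\ref{le:bounded-subgroup-integral-over-S0-function} applied to each factor $f_i$ on $\Gamma^{\perp}$ yields
\[
\int_{\Gamma^{\perp}} \|F(\alpha, \cdot)\|_{S_0(G)} \, d\mu_{\Gamma^{\perp}}(\alpha) \le K_{\Gamma^{\perp}} \|F\|_{S_0(G \times G)} \le C_1 K_{\Gamma^{\perp}} \|g\|_{S_0(G)}^2.
\]
This delivers the Bessel bound with constant $K_{\Lambda,\Gamma} \asymp C_0 C_1 K_\Lambda K_{\Gamma^{\perp}}$, and the inequality extends from $C_c(G)$ to all of $L^2(G)$ by density. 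The technical difficulty is that the argument relies on the tensor-product / projective decomposition structure of $S_0$, which goes beyond the three properties of $S_0$ listed at the start of Section~\ref{sec:feichtingers-algebra}; once those standard $S_0$-results are invoked the rest is routine.
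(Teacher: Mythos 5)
Your argument is correct, but after the shared opening (Lemma~\ref{le:lemma22-for-S0-gabor} followed by Fubini) it takes a genuinely different route from the paper's in the key estimate. The paper never isolates the quantity $\int_{\Gamma^{\perp}}\Vert s_{\alpha}\Vert_{\infty}\,d\mu_{\Gamma^{\perp}}(\alpha)$; instead it unfolds $\int_G$ via Weil's formula into $\int_{G/\Gamma^{\perp}}\int_{\Gamma^{\perp}}$, producing a quadruple integral symmetric in two annihilator variables $\alpha,\alpha'$, and applies Cauchy--Schwarz with respect to the measure $\vert g(x-\lambda-\alpha)g(x-\lambda-\alpha')\vert\,d\alpha\,d\alpha'\,d\lambda$. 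The point of that arrangement is that the resulting inner integral $\int_{\Lambda}\int_{\Gamma^{\perp}}\vert g(x-\lambda-\alpha)g(x-\lambda-\alpha')\vert\,d\alpha\,d\lambda$ factors into $\int_{\Lambda}\vert g(\cdot-\lambda)\vert\,d\lambda$ times $\int_{\Gamma^{\perp}}\vert g(\cdot-\alpha)\vert\,d\alpha$, each controlled by Lemma~\ref{le:bounded-subgroup-integral-over-S0-function} alone, so only the $L^1$-embedding and the restriction property of $S_0$ are needed and the constant comes out as $K_{\Lambda}K_{\Gamma^{\perp}}$. You instead must upgrade the uniform bound $\Vert s_{\alpha}\Vert_{\infty}\le K_{\Lambda}\Vert \overline{g}\cdot T_{\alpha}g\Vert_{S_0(G)}$ to one that is integrable over $\alpha\in\Gamma^{\perp}$, and for this you invoke the pointwise Banach-algebra property of $S_0$, the projective tensor identification $S_0(G_1\times G_2)=S_0(G_1)\widehat{\otimes}S_0(G_2)$, invariance of $S_0$ under topological group automorphisms, and restriction to $\Gamma^{\perp}\times G$. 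All of these are true and standard facts about the Feichtinger algebra, so your proof does close, but it rests on substantially more $S_0$ machinery than the paper introduces in Section~\ref{sec:feichtingers-algebra} --- a limitation you correctly flag yourself. What your route buys is the intermediate statement that $\alpha\mapsto\Vert \overline{g}\cdot T_{\alpha}g\Vert_{S_0(G)}$ lies in $L^1(\Gamma^{\perp})$ with norm controlled by $\Vert g\Vert_{S_0(G)}^2$, essentially a quantitative one-variable precursor of condition A in \eqref{eq:condA}, which is of independent interest; what the paper's symmetric Cauchy--Schwarz buys is a proof that is self-contained relative to the properties of $S_0$ it actually lists.
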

\begin{proof}
From Lemma \ref{le:lemma22-for-S0-gabor} follows that for all $f\in C_c(G)$:
\begin{align*}
& \int_{\Lambda}\int_{\Gamma} \vert \langle f, E_{\gamma}T_{\lambda}g\rangle \vert^2 \, d\gamma \, d\lambda \\
 & = \int_{\Lambda} \int_G \int_{\Gamma^{\perp}} f(x) \overline{f(x-\alpha)} \, \overline{T_{\lambda}g(x)} \, T_{\lambda}g(x-\alpha)\, d\alpha \, dx \, d\lambda \\
  & = \int_{\Lambda} \int_{G/\Gamma^{\perp}} \int_{\Gamma^{\perp}} \int_{\Gamma^{\perp}} g(x-\lambda-\alpha) \, \overline{f(x-\alpha)}\, \overline{g(x-\lambda-\alpha')} \, f(x-\alpha') \, d\alpha \, d\alpha' \, d\dot x \, d\lambda.
\end{align*}
In the latter equality we used Weil's formula and a change of variables $\alpha+\alpha' \mapsto \alpha$.
An application of the triangle inequality and the Cauchy-Schwarz inequality now yields the following estimate:
\begin{align}
& \int_{\Lambda}\int_{\Gamma} \vert \langle f, E_{\gamma}T_{\lambda}g\rangle \vert^2 \, d\mu_{\Gamma}(\gamma) \, d\mu_{\Lambda}(\lambda) \nonumber \\
& \le  \int_{G/\Gamma^{\perp}} \int_{\Gamma^{\perp}}\int_{\Gamma^{\perp}} \big\vert f(x-\alpha) f(x-\alpha') \big\vert \int_{\Lambda} \big\vert g(x-\lambda-\alpha) g(x-\lambda-\alpha') \big\vert \, d\lambda \, d\alpha \, d\alpha' d\dot x \nonumber \\
& \le  \int_{G/\Gamma^{\perp}} \Big( \int_{\Gamma^{\perp}} \big\vert  f(x-\alpha) \big\vert^2  \int_{\Lambda} \int_{\Gamma^{\perp}}\big\vert g(x-\lambda-\alpha) g(x-\lambda-\alpha') \big\vert \, d\alpha' \, d\lambda \,  d\alpha\Big)^{1/2} \nonumber \\
& \qquad \qquad  \Big( \int_{\Gamma^{\perp}} \big\vert   f(x-\alpha') \big\vert^2  \int_{\Lambda} \int_{\Gamma^{\perp}}\big\vert g(x-\lambda-\alpha) g(x-\lambda-\alpha') \big\vert \, d\alpha \, d\lambda \,  d\alpha' \Big)^{1/2} d\dot x. \label{eq:S0-always-bessel-estimate-eq1}
\end{align}
The order of integration can be rearranged due to Tonelli's
theorem. We now apply Proposition~\ref{le:bounded-subgroup-integral-over-S0-function} 
to the two innermost integrals and find that there exists a constant $K_{\Lambda,\Gamma}>0$ such that
\[ \int_{\Lambda}  \int_{\Gamma^{\perp}} \big\vert g(x-\lambda-\alpha) g(x-\lambda-\alpha') \big\vert \, d\alpha \, d\lambda = \int_{\Lambda} \vert g(x-\lambda-\alpha') \big\vert \int_{\Gamma^{\perp}} \big\vert g(x-\lambda-\alpha) \vert \, d\alpha \, d\lambda \le K_{\Lambda,\Gamma} \, \Vert g \Vert_{S_0(G)}^2, \]
where $\alpha'\in \Gamma^{\perp}$.
 Using this inequality in \eqref{eq:S0-always-bessel-estimate-eq1} yields the Bessel bound:
\begin{align*}
 \int_{\Lambda}\int_{\Gamma} \vert \langle f, E_{\gamma}T_{\lambda}g\rangle \vert^2 & \, d\mu_{\Gamma}(\gamma) \, d\mu_{\Lambda}(\lambda) \\
& \le  \int_{G/\Gamma^{\perp}} \Big( \int_{\Gamma^{\perp}} \big\vert   f(x-\alpha) \big\vert^2 K_{\Lambda,\Gamma} \, \Vert g \Vert_{S_0(G)}^2 \Big)^{1/2} \nonumber \\
& \qquad \qquad  \Big( \int_{\Gamma^{\perp}} \big\vert f(x-\alpha') \big\vert^2 K_{\Lambda,\Gamma} \, \Vert g \Vert_{S_0(G)}^2  \Big)^{1/2} d\mu_{G/\Gamma^{\perp}}(\dot x) \\
& =  \ K_{\Lambda,\Gamma} \, \Vert g \Vert_{S_0(G)}^2 \, \Vert f \Vert_{L^2(G)}^2.\end{align*}
Since $C_c(G)$ is dense in $L^2(G)$, the result follows.
\end{proof}

\subsection{The Walnut representation of the frame operator}
\label{sec:walnut-repr-frame-oper}

The continuous Gabor frame operator associated with \emph{semi} co-compact
Gabor systems defined in (\ref{eq:def-Gabor-frame-op-2-gen}) can be
converted into a \emph{discrete} transform called the Walnut
representation. The Walnut representation plays an important role the
usual discrete (lattice) theory of Gabor analysis.  For Gabor theory
on $L^2(\mathbb R)$ the result goes back to \cite{MR1155734} and is
also presented in \cite{MR1843717}. See \cite{MR1814424} for a
detailed analysis of the convergence properties of the Walnut
representation in $L^2(\R)$.

In order to state our version of the Walnut representation, we need to
introduce two dense subspaces of $L^2(G)$:
\begin{align}
  \label{eq:def-D-time}
  \cD_s := \big\{ f \in L^2 (G) \, : \, {f} \in L^\infty(G) \text{ and }
    \supp {f} \text{ is compact in } G \big\} \\
\intertext{and} 
 \label{eq:def-D-freq}
  \cD_t := \big\{ f \in L^2 (G) \, : \, \hat{f} \in L^\infty(\ghat) \text{ and }
    \supp \hat{f} \text{ is compact in } \ghat \big\}.
\end{align}
Recall also the definition of $s_{\alpha}$ and $t_{\beta}$ from
\eqref{eq:def-s-alpha} and \eqref{eq:def-t-alpha}, respectively.
\begin{theorem} 
  \label{th:Gabor-walnut-dense} 
  Let $g,h \in L^2(G)$.  Let $\Gamma$ be a closed, co-compact subgroup of
  $\ghat$, and let $(\Lambda,\Sigma_\Lambda,\mu_\Lambda)$ be an admissible
  measure space in $G$. Suppose that \gaborset and \gaborset[h] are Bessel systems,
  and let $S_{g,h}$ be the associated mixed frame operator. Then
\begin{equation}
\label{eq:gabor-frame-walnut}
 S_{g,h}f = \sum_{\alpha\in \Gamma^{\perp}} \!
    M_{s_{\alpha}} T_{\alpha} f \qquad \text{for all $f\in \cD_s$}, 
\end{equation} 
with unconditional, norm convergence in $L^2(G)$.
\end{theorem}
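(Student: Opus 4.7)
The plan is to derive the Walnut identity first in weak form via a Fourier--Poisson duality between the closed co-compact subgroup $\LG \subset \ghat$ and its discrete annihilator $\LG^\perp \subset G$, and then to upgrade to unconditional $L^2$-norm convergence by exploiting the compact support of $f \in \cD_s$. Concretely, I would start from the weak definition
\[
\innerprod{S_{g,h}f}{u} = \int_\Gamma \int_\Lambda \innerprod{f}{\gabor}\innerprod{\gabor[h]}{u}\, d\mu_\Lambda(\lambda)\, d\mu_\Gamma(\gamma), \qquad u \in L^2(G),
\]
which is absolutely convergent by Cauchy--Schwarz and the two Bessel hypotheses, so Fubini applies. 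Setting $\phi_\lambda(x) := f(x)\overline{g(x-\lambda)}$ and $\psi_\lambda(x) := h(x-\lambda)\overline{u(x)}$, the two inner products become $\hat\phi_\lambda(\gamma)$ and $\widehat{\tilde\psi_\lambda}(\gamma)$ with $\tilde\psi(x) := \psi(-x)$, so the $\gamma$-integrand rewrites as $\widehat{\phi_\lambda * \tilde\psi_\lambda}(\gamma)$ by the convolution theorem. Since $\LG^\perp$ is discrete (because $\LG$ is co-compact), the Poisson-type identity
\[
\int_\LG \hat F(\gamma)\, d\mu_\Gamma(\gamma) = \sum_{\alpha \in \LG^\perp} F(\alpha),
\]
valid in the dual-measure normalization for sufficiently regular $F$, applied to $F = \phi_\lambda * \tilde\psi_\lambda$ converts the $\Gamma$-integral into a sum over $\LG^\perp$. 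Interchanging this sum with the $\LL$-integration (justified by the absolute convergence), performing a translation in the integration variable, and recognising the inner integral via the definition~\eqref{eq:def-s-alpha} of $s_\alpha$ (with the reindexing $\alpha \mapsto -\alpha$ permitted since $\LG^\perp$ is a group) identify the summand as $\innerprod{M_{s_\alpha}T_\alpha f}{u}$, yielding the weak Walnut identity
\[
\innerprod{S_{g,h}f}{u} = \sum_{\alpha \in \LG^\perp} \innerprod{M_{s_\alpha}T_\alpha f}{u}, \qquad f \in \cD_s,\ u \in L^2(G).
\]

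To upgrade to unconditional norm convergence, I would use that $K := \supp f$ is compact and $\LG^\perp$ is discrete in $G$, giving a finite uniform bound $N_0 := \sup_{x \in G} \card{(x - K) \cap \LG^\perp}$ on the number of non-zero terms at each $x$ in the pointwise series $\sum_\alpha s_\alpha(x) f(x-\alpha)$. Cauchy--Schwarz then yields the bounded-overlap estimate
\[
\normBig{\sum_{\alpha \in F} M_{s_\alpha}T_\alpha f}_{L^2(G)}^2 \le N_0 \sum_{\alpha \in F} \norm{M_{s_\alpha}T_\alpha f}_{L^2(G)}^2
\]
for every finite $F \subset \LG^\perp$. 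Writing $s_\alpha(y+\alpha) = \innerprod[L^2(\LL)]{h(y+\alpha-\cdot)}{g(y-\cdot)}$, I would then use the Bessel hypothesis on $\gaborset[h]$ together with the fiberization techniques of Section~\ref{sec:fiberization} to show that $\{h(y+\alpha-\cdot)\}_{\alpha \in \LG^\perp}$ is uniformly Bessel in $L^2(\LL)$ for a.e.\ $y$. This produces $\sum_\alpha \absbig{s_\alpha(y+\alpha)}^2 \le B_h\, s_{g,g,0}(y) \le B_g B_h$, and integration against $\abs{f}^2$ over $K$ yields $\sum_\alpha \norm{M_{s_\alpha}T_\alpha f}_{L^2(G)}^2 < \infty$. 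The Cauchy criterion for tail sums in $L^2(G)$ then produces unconditional norm convergence, and the $L^2$-limit is identified with $S_{g,h}f$ via the weak identity.

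The main obstacle is precisely the uniform Bessel property of $\{h(y+\alpha-\cdot)\}_{\alpha \in \LG^\perp}$ in $L^2(\LL)$ for a.e.\ $y$: this is a fiberization-type statement that transfers the Gabor Bessel inequality on $L^2(G)$ into a Bessel inequality in $L^2(\LL)$ indexed by $\LG^\perp$, and it is the only non-routine step. The naive pointwise bound $\norm{s_\alpha}_\infty \le \sqrt{B_g B_h}$ (immediate from Cauchy--Schwarz in $\lambda$) is not summable in $\alpha$ and must be replaced by this sharper fibered estimate. Once the square-summability is in hand, the remainder of the proof assembles routinely from the weak identity and the bounded-overlap inequality.
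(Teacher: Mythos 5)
Your proof is correct, and while the weak Walnut identity is obtained by essentially the same Fourier--Poisson computation that the paper simply imports from the companion paper \cite{JakobsenReproducing2014}, your passage from the weak identity to unconditional norm convergence is genuinely different. The paper extends the weak identity to all $f_2\in L^2(G)$ by density and then invokes the Orlicz--Pettis theorem, so that unconditional norm convergence is deduced from weak subseries convergence; you instead argue directly, via the bounded-overlap inequality $\normsmall{\sum_{\alpha\in F}M_{s_\alpha}T_\alpha f}^2\le N_0\sum_{\alpha\in F}\normsmall{M_{s_\alpha}T_\alpha f}^2$ (with $N_0\le\cardsmall{(K-K)\cap\Gamma^\perp}<\infty$ because $\Gamma^\perp$ is discrete and closed) together with square-summability of the individual terms. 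The ingredient you single out as the only non-routine step --- that $\{h(y+\alpha-\cdot)\}_{\alpha\in\Gamma^\perp}$ is a Bessel family in $L^2(\Lambda)$ with bound $B_h$ for a.e.\ $y$ --- is in fact available in the paper: it is exactly the statement that the operator $L_y$ of \eqref{eq:def-pre-gram-Ux}, with $g$ replaced by $h$, is bounded by $B_h^{1/2}$, i.e.\ Lemma~\ref{le:pre-gram-Ux-linear-and-bounded}, which rests on Proposition~\ref{thm:TI-gramian-pre-bessel-equi} and therefore holds for merely admissible $\Lambda$ (the reindexing $\alpha\mapsto-\alpha$ takes place in the group $\Gamma^\perp$, not in $\Lambda$, so no group structure on $\Lambda$ is required). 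Your route is longer but buys an explicit quantitative estimate, $\sum_{\alpha}\normsmall{M_{s_\alpha}T_\alpha f}^2\le B_gB_h\normsmall{f}^2$, and avoids the abstract Banach-space machinery; the paper's route is shorter but leaves implicit the uniform bound needed to pass from $f_2\in\cD_s$ to general $f_2\in L^2(G)$ before Orlicz--Pettis can be applied --- a bound which your computation actually supplies.
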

\begin{proof}
By the proof of the main result in \cite{JakobsenReproducing2014}, we have that for all $f_1,f_2\in \cD_s$,
\begin{align*}
\langle S_{g,h} f_1,f_2 \rangle & = \int_{\Lambda} \int_{\Gamma} \langle f_1,E_{\gamma} T_{\lambda} g\rangle \langle E_{\gamma} T_{\lambda} h, f_2 \rangle \, d\gamma \, d\lambda \\
& =  \sum_{\alpha\in \Gamma^{\perp}} \big\langle M_{s_{\alpha}} T_{\alpha} f_1,f_2\big\rangle.
\end{align*} 
Moreover, the convergence is absolute and thus unconditionally. Because
$\cD_s$ is dense in $L^2(G)$ spaces we have that $\langle S_{g,h} f_1,f_2 \rangle= \sum_{\alpha\in \Gamma^{\perp}} \big\langle M_{s_{\alpha}} T_{\alpha} f_1,f_2\big\rangle$ holds for all $f_2\in
L^2(G)$. By the Orlicz-Pettis Theorem (see, e.g., \cite{MR737004}), this implies unconditional $L^2$-norm
convergence for \eqref{eq:gabor-frame-walnut}.
\end{proof}

\begin{remark}
  If we assume $g,h \in S_0(G)$, then \eqref{eq:gabor-frame-walnut} extends to all of $L^2(G)$.
\end{remark}

\begin{remark}
\label{rem:Gabor-walnbut-dense-freq}
In Theorem~\ref{th:Gabor-walnut-dense}, if we instead assume that $\LL$ is a closed, co-compact
subgroup of $G$ and that $(\LG,\Sigma_\LG,\mu_\LG)$ is an admissible measure space in $\ghat$, then 
\begin{equation}
\label{eq:gabor-frame-walnut-freq}
 \cF S_{g,h}f = \sum_{\beta\in \LL^{\perp}} \!
    M_{t_{\beta}} T_{\beta} \hat{f} \qquad \text{for all $f\in \cD_t$} 
\end{equation} 
holds.
\end{remark}

We can now easily show the following result.
\begin{corollary} \label{cor:frame-implies-bounded-calderon} 
\begin{enumerate}[(i)]
\item Under the assumptions of Theorem~\ref{th:Gabor-walnut-dense} and if $\gaborset$
  is a frame with bounds $A$ and $B$, then
  \[
  A \le \int_{\Lambda} \vert g(x+\lambda) \vert^2 \, d\lambda \le B \qquad a.e. \ x \in G.
  \]
\item Under the assumptions of Remark~\ref{rem:Gabor-walnbut-dense-freq} and if $\gaborset$
  is a frame with bounds $A$ and $B$, then
  \[ 
  A \le \int_{\Gamma} \vert \hat g(\omega\gamma) \vert^2 \, d\gamma \le B \qquad  a.e. \ \omega \in
  \ghat.
  \]
\end{enumerate}
\noindent In either case, if $\gaborset$ is a Bessel system with bound $B$, then the upper bound holds.
\end{corollary}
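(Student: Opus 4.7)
The plan is to derive part~(i) from the Walnut representation of $S = S_{g,g}$ (Theorem~\ref{th:Gabor-walnut-dense}) by choosing test functions that single out the $\alpha = 0$ summand, and then to obtain part~(ii) by the symmetric argument on the Fourier side via Remark~\ref{rem:Gabor-walnbut-dense-freq}. The key point is that the $\alpha = 0$ term $M_{s_0}$ is pointwise multiplication by
\[
s_0(x) = \int_\Lambda |g(x-\lambda)|^2 \, d\mu_\Lambda(\lambda),
\]
which is the function appearing in the statement (up to the symmetry $\lambda \mapsto -\lambda$ of Haar integration in the subgroup case, reconciling the ``$+\lambda$'' in the claim with the ``$-\lambda$'' from the definition of $s_0$).

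Since $\Gamma$ is closed and co-compact in $\ghat$, its annihilator $\Gamma^\perp$ is a discrete closed subgroup of $G$. I would fix a Borel section $X$ of $\Gamma^\perp$ in $G$, so $G = \bigsqcup_{\alpha \in \Gamma^\perp}(X+\alpha)$. For any $f \in \cD_s$ with $\supp f \subset X$, the function $M_{s_\alpha}T_\alpha f$ is supported in $X + \alpha$, which is disjoint from $\supp f \subset X$ whenever $\alpha \ne 0$. Theorem~\ref{th:Gabor-walnut-dense} therefore collapses to
\[
\langle Sf, f\rangle = \int_G s_0(x)\, |f(x)|^2 \, d\mu_G(x),
\]
and the frame inequalities $A\|f\|^2 \le \langle Sf,f\rangle \le B\|f\|^2$ translate into
\[
A \int_G |f|^2 \, d\mu_G \;\le\; \int_G s_0(x)\, |f(x)|^2 \, d\mu_G(x) \;\le\; B \int_G |f|^2 \, d\mu_G
\]
for every such $f$.

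Specializing to $f = \chi_E$ for relatively compact Borel sets $E \subset X$ (which lie in $\cD_s$ since $G$ is second countable, hence $\sigma$-compact), one obtains $A\,\mu_G(E) \le \int_E s_0 \, d\mu_G \le B\,\mu_G(E)$. A standard measure-theoretic argument---considering the sets $\{x \in X : s_0(x) > B + \varepsilon\}$ and $\{x \in X : s_0(x) < A - \varepsilon\}$, approximating each from within by relatively compact subsets, and letting $\varepsilon \downarrow 0$---forces $A \le s_0(x) \le B$ for a.e.\ $x \in X$. Since every translate $X + \alpha_0$ with $\alpha_0 \in \Gamma^\perp$ is itself a Borel section of $\Gamma^\perp$, the same argument applies to each translate and, by the disjoint covering of $G$, yields the bound a.e.\ on all of $G$. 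The Bessel assertion in~(i) is obtained by retaining only the upper inequality throughout; part~(ii) is the mirror argument, applying Remark~\ref{rem:Gabor-walnbut-dense-freq} with $\hat f$ supported in a Borel section of the discrete subgroup $\Lambda^\perp \subset \ghat$ and with $s_0$ replaced by $t_0(\omega) = \int_\Gamma |\hat g(\omega \gamma^{-1})|^2 \, d\mu_\Gamma(\gamma)$. The only real obstacle is the measure-theoretic passage from integrated inequalities to pointwise bounds, which is routine once the disjointness of the translates $X + \alpha$ and the $\sigma$-finiteness of $\mu_G$ are in hand.
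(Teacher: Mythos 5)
Your proposal is correct and follows essentially the same route as the paper: both restrict the Walnut representation to test functions supported in a Borel section of the discrete subgroup $\Gamma^\perp$, so that only the $\alpha=0$ term $M_{s_0}$ survives, and then read off the pointwise bounds on $s_0$ from the resulting weighted $L^2$ inequality (the paper leaves the final measure-theoretic passage and the extension over all translates of the section implicit, which you spell out). Part (ii) is handled by the same mirror argument on the Fourier side in both cases.
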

\begin{proof}
  If $\gaborset$ is a frame, then, in particular,
  \[
  A \, \Vert f \Vert^2 \le \langle S_{g,g} f, f\rangle \le B \, \Vert f \Vert^2 \qquad \text{for all
  } f\in \mathcal D_s(G).
  \]
  Pick now a function $f\in \mathcal D_s(G)$ so that the support of $f$ lies within a fundamental
  domain of the discrete group $\Gamma^{\perp} \subset G$. Then, by \eqref{eq:gabor-frame-walnut},
  \begin{alignat*}{2}
    & &&A \, \Vert f \Vert^2 \le \langle \sum_{\alpha\in\Gamma^{\perp}} M_{s_{\alpha}} T_{\alpha} f,f\rangle \le B \, \Vert f \Vert^2 \\
    &\Leftrightarrow \qquad  &&A \, \Vert f \Vert^2 \le \langle s_{0} f,f\rangle \le B \, \Vert f \Vert^2 \\
    &\Leftrightarrow \qquad &&A \int_G \vert f(x) \vert^2 \, dx \le \int_G \Big( \int_{\Lambda}
    \vert g(x+\lambda) \vert^2 \, d\lambda \Big)\, \vert f(x) \vert^2 \, dx \le B \int_G \vert f(x)
    \vert^2 \, dx.
  \end{alignat*}
  From this assertion (i) follows. By use of \eqref{eq:gabor-frame-walnut-freq}, one proves assertion (ii)
  in the same fashion.
\end{proof}

\subsection{The Janssen representations of the frame operator}
\label{sec:janssen}

The Walnut representation was formulated for semi co-compact Gabor
systems.  In case both $\LL$ and $\LG$ are co-compact, closed
subgroups, we can offer a more time-frequency symmetrical representation of the
Gabor frame operator; this is the so-called Janssen representation.


\begin{theorem} 
\label{th:JanssenRepImpr} 
Let $g,h \in L^2(G)$ and let $\Lambda\subset G,\Gamma\subset \ghat$ be closed, co-compact subgroups such
that $\gaborset$ and $\gaborset[h]$ are Bessel systems. Suppose that the pair $(g,h)$ satisfies
\emph{condition A}:
\begin{equation} 
\label{eq:condA} 
\sum_{\alpha\in \Gamma^{\perp}} \sum_{\beta\in \Lambda^{\perp}}
\absbig{\langle h, E_{\beta} T_{\alpha} g\rangle} < \infty.
\end{equation}
Then
\begin{equation} 
  \label{eq:jannsen_rep}
  S_{g,h} = \sum_{\alpha\in \Gamma^{\perp}}\sum_{\beta\in
    \Lambda^{\perp}} \langle h, E_{\beta}T_{\alpha}g\rangle E_{\beta}T_{\alpha}
\end{equation}
with absolute convergence in the operator norm.
\end{theorem}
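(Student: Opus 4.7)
The plan is to deduce the Janssen representation from the Walnut representation of Theorem~\ref{th:Gabor-walnut-dense} by expanding each $\Lambda$-periodic coefficient function $s_{\alpha}$ as a Fourier series on the compact quotient $G/\Lambda$ and reading the Fourier coefficients off from Remark~\ref{rem:t-alpha-unif-bounded-and-fourier-series}. Since both $\Lambda$ and $\Gamma$ are closed and co-compact, the annihilators $\Gamma^{\perp}$ and $\Lambda^{\perp}$ are discrete and countable, so the double sum in~\eqref{eq:jannsen_rep} has a chance of making sense, and condition~A is exactly what pins down the mode of convergence.

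By Remark~\ref{rem:t-alpha-unif-bounded-and-fourier-series}, $s_{\alpha}\in L^{\infty}(G)$ is $\Lambda$-periodic with Fourier series on $G/\Lambda$
\[
s_{\alpha}(x)=\sum_{\beta\in\Lambda^{\perp}}\langle h,E_{\beta}T_{\alpha}g\rangle\,\beta(x).
\]
Condition~A asserts that $\sum_{\beta}\abs{\langle h,E_{\beta}T_{\alpha}g\rangle}<\infty$ for every $\alpha$, so this series converges absolutely and uniformly, giving $\norm[\infty]{s_{\alpha}}\le\sum_{\beta\in\Lambda^{\perp}}\abs{\langle h,E_{\beta}T_{\alpha}g\rangle}$. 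Using $\norm[\mathrm{op}]{M_{s_{\alpha}}}=\norm[\infty]{s_{\alpha}}$ and that each $E_{\beta}$ is unitary, I infer $M_{s_{\alpha}}=\sum_{\beta}\langle h,E_{\beta}T_{\alpha}g\rangle\,E_{\beta}$ in operator norm, and composing on the right with the unitary $T_{\alpha}$ gives $M_{s_{\alpha}}T_{\alpha}=\sum_{\beta}\langle h,E_{\beta}T_{\alpha}g\rangle\,E_{\beta}T_{\alpha}$ in operator norm. Summing over $\alpha$ and exploiting the $\ell^{1}$-bound
\[
\sum_{\alpha\in\Gamma^{\perp}}\norm[\mathrm{op}]{M_{s_{\alpha}}T_{\alpha}}\le\sum_{\alpha\in\Gamma^{\perp}}\sum_{\beta\in\Lambda^{\perp}}\abs{\langle h,E_{\beta}T_{\alpha}g\rangle}<\infty
\]
furnished by condition~A, the iterated series
\[
T:=\sum_{\alpha\in\Gamma^{\perp}}\sum_{\beta\in\Lambda^{\perp}}\langle h,E_{\beta}T_{\alpha}g\rangle\,E_{\beta}T_{\alpha}
\]
converges absolutely in operator norm to a bounded operator on $L^{2}(G)$ whose value is independent of the order of summation.

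It remains to identify $T$ with $S_{g,h}$. Theorem~\ref{th:Gabor-walnut-dense} gives $S_{g,h}f=\sum_{\alpha}M_{s_{\alpha}}T_{\alpha}f$ for $f\in\cD_{s}$, and the operator-norm expansion of $M_{s_{\alpha}}T_{\alpha}$ above yields $S_{g,h}f=Tf$ on $\cD_{s}$. Since $\cD_{s}$ is dense in $L^{2}(G)$ and both $T$ and $S_{g,h}$ are bounded, they coincide on all of $L^{2}(G)$, which is~\eqref{eq:jannsen_rep}. The principal obstacle is the bookkeeping needed to upgrade two different modes of convergence --- the $L^{2}$-norm convergence in Walnut's representation and the uniform convergence of the Fourier series of each $s_{\alpha}$ --- into a single doubly-indexed series converging absolutely in the operator norm; condition~A is precisely the hypothesis that trivialises this upgrade, by providing a summable majorant against $\norm[\mathrm{op}]{E_{\beta}T_{\alpha}}=1$.
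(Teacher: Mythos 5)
Your proof is correct and follows essentially the same route as the paper: both deduce \eqref{eq:jannsen_rep} from the Walnut representation by substituting the Fourier series $s_{\alpha}=\sum_{\beta\in\Lambda^{\perp}}\langle h,E_{\beta}T_{\alpha}g\rangle\beta(\cdot)$ from Remark~\ref{rem:t-alpha-unif-bounded-and-fourier-series}, using condition~A for absolute operator-norm convergence and density of $\cD_s$ to identify the two bounded operators. The only difference is cosmetic: you make explicit, via $\norm{M_{s_\alpha}}=\norm[\infty]{s_\alpha}$ and the uniform convergence of the inner series, the interchange of limits that the paper carries out weakly against $f_1,f_2\in\cD_s$.
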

\begin{proof}
  Define the operator $\tilde{S}:L^2(G) \to L^2(G)$ by
  \[
  \tilde{S} = \sum_{\alpha\in \Gamma^{\perp}}\sum_{\beta\in \Lambda^{\perp}}
  \innerprod{h}{E_{\beta}T_{\alpha}g} E_{\beta}T_{\alpha}.
  \]
  This series converges absolutely in the operator norm by \eqref{eq:condA}. Hence, the convergence
  is unconditionally. Replacing $s_{\alpha}$ in the Walnut representation by its Fourier series
  representation from Remark \ref{rem:t-alpha-unif-bounded-and-fourier-series} yields
  \begin{align*}
    \langle S_{g,h} f_1,f_2\rangle & = \langle \sum_{\alpha\in \Gamma^{\perp}} M_{s_{\alpha}} 
    T_{\alpha} f_1,f_2\rangle = \langle \sum_{\alpha\in \Gamma^{\perp}} \sum_{\beta\in
      \Lambda^{\perp}} \langle h, E_{\beta} T_{\alpha} g\rangle \beta(x) T_{\alpha} f_1,f_2\rangle
    \\ & = \sum_{\alpha\in \Gamma^{\perp}} \sum_{\beta\in \Lambda^{\perp}}\langle h, E_{\beta}
    T_{\alpha} g\rangle \langle E_{\beta} T_{\alpha} f_1,f_2\rangle = \langle \tilde{S}
    f_1,f_2\rangle
  \end{align*}
  for $f_1,f_2 \in \cD_s$. Since $\cD_s$ is dense in $L^2(G)$, it follows that $S_{g,h}=\tilde{S}$.

\end{proof}

Note that (\ref{eq:jannsen_rep}) indicates convergence in the uniform operator topology, while
Walnut's representation, on the other hand, conveyed convergence in the strong operator topology.

For generators $g,h \in S_0(G)$ in Feichtinger's algebra, the assumptions of the Janssen representation in 
Theorem~\ref{th:JanssenRepImpr} are automatically satisfied. The Bessel condition follows from
Theorem~\ref{thm:feichtinger-implies-bessel}, while \eqref{eq:condA} follows from the
next result.
\begin{proposition} Let $g,h\in S_0(G)$, and let $\Lambda$ and $\Gamma$ be closed subgroups in $G$ and $\ghat$, respectively.
The pair $(g,h)$ satisfies \eqref{eq:condA}, that is,
\[ \int_{\Lambda^{\perp}}\int_{\Gamma^{\perp}} \vert \langle g, E_{\beta}T_{\alpha}h\rangle \vert \, d\alpha \, d\beta < \infty.\]
\end{proposition}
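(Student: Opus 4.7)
The plan is to recognize the integrand $\abssmall{\langle g, E_\beta T_\alpha h\rangle}$ as the modulus of the short-time Fourier transform $\mathcal V_h g$, and then to deduce integrability purely from the Feichtinger-algebra properties collected in Section~\ref{sec:feichtingers-algebra}. A direct computation gives
\[
\langle g, E_\beta T_\alpha h\rangle = \int_G g(t)\, \overline{\beta(t)\, h(t-\alpha)}\, d\mu_G(t) = \mathcal V_h g(\alpha, \beta),
\]
so, by Tonelli's theorem, the iterated integral in the statement equals the $L^1(\Gamma^\perp \times \Lambda^\perp)$-norm of the restriction of $\mathcal V_h g$ to $\Gamma^\perp \times \Lambda^\perp$. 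The task thus reduces to showing that this restriction is integrable.

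Since $g,h \in S_0(G)$ entails $\mathcal V_h g \in S_0(G\times \ghat)$ — one of the standing properties recalled in Section~\ref{sec:feichtingers-algebra} — and since $\Gamma^\perp \times \Lambda^\perp$ is a closed subgroup of the LCA group $G\times \ghat$, the restriction mapping
\[
\mathcal R_{\Gamma^\perp \times \Lambda^\perp}: S_0(G\times \ghat)\to S_0(\Gamma^\perp \times \Lambda^\perp)
\]
is bounded. Composing with the continuous embedding $S_0\hookrightarrow L^1$, which is valid on any LCA group, yields
\[
\int_{\Lambda^\perp}\int_{\Gamma^\perp} \abssmall{\langle g,E_\beta T_\alpha h\rangle}\,d\alpha\,d\beta \le C_1 \norm[S_0(G\times \ghat)]{\mathcal V_h g} \le C_2 \norm[S_0]{g}\norm[S_0]{h}<\infty
\]
for suitable constants $C_1,C_2>0$, where the last inequality uses the standard bilinear estimate for the STFT of two $S_0$-functions.

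The only ingredient not already assembled in Section~\ref{sec:feichtingers-algebra} is the estimate $\norm[S_0(G\times\ghat)]{\mathcal V_h g} \le C \norm[S_0]{g}\norm[S_0]{h}$; this is a classical property of Feichtinger's algebra, available in the cited references \cite{MR1601107,MR643206,MR1601091}, and I expect it to be the only genuine obstacle — though a mild one, since the argument amounts to a change of variables identifying $\mathcal V_h g$ with the symplectic Fourier transform of $h \otimes \overline{g}$ composed with a coordinate automorphism of $G\times \ghat$, both operations leaving $S_0$ invariant. Everything else — the STFT identification, the restriction property for closed subgroups, and the $S_0\hookrightarrow L^1$ embedding — has been prepared in the paper, so the proof amounts to stringing these facts together.
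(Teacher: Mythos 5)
Your argument is correct and is essentially the paper's own proof: identify $\langle g, E_\beta T_\alpha h\rangle$ with $\mathcal V_h g(\alpha,\beta)$, invoke $\mathcal V_h g \in S_0(G\times\ghat)$ (the paper cites \cite[Corollary~7.6.6]{MR1601091} for this), restrict to the closed subgroup $\Gamma^\perp\times\Lambda^\perp$, and use the continuous embedding $S_0\hookrightarrow L^1$. The quantitative bilinear estimate $\norm[S_0(G\times\ghat)]{\mathcal V_h g}\le C\norm[S_0]{g}\norm[S_0]{h}$ that you flag as the ``only genuine obstacle'' is not actually needed, since the qualitative membership $\mathcal V_h g\in S_0(G\times\ghat)$ (already listed among the properties in Section~\ref{sec:feichtingers-algebra}) suffices for finiteness.
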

\begin{proof} By \cite[Corollary 7.6.6]{MR1601091} we have that $g,h\in S_0(G)$ implies
  $(x,\omega)\mapsto\langle g, E_{\omega}T_x h \rangle\in S_0(G\times\ghat)$. If we restrict this
  mapping to $\Gamma^{\perp}\times\Lambda^{\perp}\subset G\times \ghat$ and use that $S_0$ is
  continuously embedded into $L^1$, we find that \eqref{eq:condA} is satisfied.
\end{proof}

The next version of the Janssen representation holds for arbitrary (not necessarily co-compact)
closed subgroups $\Lambda\subset G, \Gamma\subset \ghat$. It is called the \emph{fundamental
  identity of Gabor analysis} (FIGA). In  \cite{MR2264211}
Feichtinger and Luef give a detailed answer to when \eqref{eq:jannsen_arb_group_rep} holds in the setting of $\R^n$, see also \cite{MR1601107,MR1601091} for related results. The FIGA was first
proved by Rieffel~\cite{MR941652} for generators $g,h$ in the Schwartz-Bruhat space
$S(G)$. Rieffel's proof uses the Poisson summation formula and also holds for the non-separable case
with closed subgroups in $G\times\ghat$; it is also possbile to give an argument based on Janssen's proof for (lattice) Gabor systems in $L^2(\R)$
 \cite{MR1310658, MR1350700}. 

\begin{theorem} 
\label{th:JanssenRepArbGroup} 
Let $f_1,f_2,g,h \in L^2(G)$, and let $\Lambda\subset G, \Gamma\subset \ghat$ be closed subgroups. Assume that
$\gaborset[g]$ and $\gaborset[h]$ are Bessel systems. If 
\[ (\alpha,\beta)\mapsto
  \langle E_{\overline{\beta}}T_{\alpha} f_1,f_2\rangle
  \langle h, E_{\overline{\beta}} T_{\alpha}g\rangle \in
  L^1(\Gamma^{\perp}\times\Lambda^{\perp}),\] then 
\begin{equation} 
\label{eq:jannsen_arb_group_rep} 
\langle S_{g,h} f_1,f_2\rangle = \int_{\Gamma^{\perp}}\int_{\Lambda^{\perp}} \langle h, E_{\beta}T_{\alpha}g\rangle  \langle E_{\beta}T_{\alpha} f_1,f_2\rangle \, d\beta \, d\alpha.
\end{equation}
\end{theorem}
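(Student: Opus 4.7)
The plan is to follow Rieffel's approach \cite{MR941652}, viewing \eqref{eq:jannsen_arb_group_rep} as an instance of Poisson summation (in Weil's formulation) applied to the closed subgroup $\Lambda \times \Gamma$ of the time-frequency plane $G \times \ghat$. Define
\[
F(\lambda,\gamma) := \langle f_1, E_\gamma T_\lambda g\rangle \, \overline{\langle f_2, E_\gamma T_\lambda h\rangle}, \qquad (\lambda,\gamma) \in G \times \ghat,
\]
so that the left-hand side of \eqref{eq:jannsen_arb_group_rep} equals $\int_\Lambda \int_\Gamma F(\lambda,\gamma) \, d\gamma \, d\lambda$. Under the natural pairing $(\beta,\alpha)\cdot(\lambda,\gamma) = \beta(\lambda)\gamma(\alpha)$ identifying $\widehat{G\times\ghat} \cong \ghat \times G$, the annihilator of $\Lambda\times\Gamma$ is precisely $\Lambda^\perp\times\Gamma^\perp$, matching the index set on the right.

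The crucial computational step is the Fourier identity
\[
\widehat{F}(\beta,\alpha) = \langle h, E_\beta T_\alpha g\rangle \, \langle E_\beta T_\alpha f_1, f_2\rangle,
\]
which expresses the symplectic covariance of the cross-ambiguity function. We would derive it by expanding each inner product in $F$ as an integral over $G$, applying Fubini, and then invoking the Fourier intertwining relations $\cF T_a = E_{a^{-1}}\cF$, $\cF E_\chi = T_\chi \cF$ together with the commutator $T_a E_\chi = \overline{\chi(a)} E_\chi T_a$ from Section~\ref{sec:LCA}; this is the same circle of ideas that yields Moyal's identity $\langle V_g f_1, V_h f_2\rangle_{L^2(G\times\ghat)} = \langle f_1,f_2\rangle\overline{\langle g,h\rangle}$. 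With this identity in hand, Weil's formula \eqref{eq:1802a} applied to $\Lambda\times\Gamma \subset G\times\ghat$ gives
\[
\int_{\Lambda\times\Gamma} F(\lambda,\gamma) \, d(\lambda,\gamma) = \int_{\Lambda^\perp\times\Gamma^\perp} \widehat{F}(\beta,\alpha) \, d(\beta,\alpha),
\]
which, after matching the sign conventions on characters, is exactly \eqref{eq:jannsen_arb_group_rep}. The hypothesis that $(\alpha,\beta)\mapsto \langle E_{\overline{\beta}}T_\alpha f_1,f_2\rangle \langle h, E_{\overline{\beta}} T_\alpha g\rangle$ lies in $L^1(\Gamma^\perp\times\Lambda^\perp)$ is precisely what ensures that the Fourier side of Poisson summation is absolutely convergent and no further regularity assumptions on $F$ are required.

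The main obstacle is that $F$ is a priori only a pointwise product of $L^2$ short-time Fourier transforms on $G \times \ghat$, so $F$ need not be integrable, and Poisson summation in full generality typically requires stronger regularity such as $F, \widehat{F} \in L^1$. The standard remedy, exploited by Rieffel, is a density argument: first prove the identity for $g,h$ in the Schwartz--Bruhat space $S(G)$ (where $F \in S(G\times\ghat)$ and every manipulation is immediate), and then extend to arbitrary $g,h \in L^2(G)$ satisfying the stated hypotheses by continuity of both sides in $(g,h)$. The left-hand side is continuous by the Bessel bounds of $\gaborset[g]$ and $\gaborset[h]$, while continuity of the right-hand side follows from the given $L^1$ assumption together with Cauchy--Schwarz applied to the inner products $\langle E_\beta T_\alpha f_1, f_2\rangle$ and $\langle h, E_\beta T_\alpha g\rangle$. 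Janssen's alternative derivation, based on directly Fourier-expanding $\langle f_1, E_\gamma T_\lambda g\rangle$ in the $\gamma$ variable, could be transcribed to this setting as a secondary route that avoids invoking Poisson summation in its most general form.
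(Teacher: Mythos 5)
First, a point of reference: the paper does not actually prove Theorem~\ref{th:JanssenRepArbGroup}; it states the result and defers to Rieffel's Poisson-summation argument (for Schwartz--Bruhat generators) and to Janssen's lattice argument. Your proposal follows the Rieffel route, and its computational core is sound: with $F(\lambda,\gamma)=\langle f_1,E_\gamma T_\lambda g\rangle\overline{\langle f_2,E_\gamma T_\lambda h\rangle}$ one has $F\in L^1(G\times\ghat)$ automatically (Moyal's formula puts each factor in $L^2(G\times\ghat)$), and the symplectic Fourier transform identity $\widehat F(\beta,\alpha)=\langle h,E_\beta T_\alpha g\rangle\,\langle E_\beta T_\alpha f_1,f_2\rangle$ is correct up to the usual sign conventions.

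The gaps are in the two places where the real difficulty of the FIGA lives. \emph{(1) The Poisson summation step.} Knowing $F\in L^1(G\times\ghat)$ and $\widehat F\in L^1\bigl((\Lambda\times\Gamma)^\perp\bigr)$ gives, via Weil's formula and Fourier inversion on the quotient, that the periodization of $F$ over $\Lambda\times\Gamma$ agrees with $\int_{(\Lambda\times\Gamma)^\perp}\widehat F(\xi)\xi(\cdot)\,d\xi$ only for \emph{almost every} coset; the left-hand side of \eqref{eq:jannsen_arb_group_rep} is the value of that periodization at the \emph{identity} coset, a null set. Absolute convergence of both sides of Poisson summation does not imply their equality (this is the classical Katznelson-type obstruction, and precisely the subtlety analysed by Feichtinger and Luef for the FIGA), so your claim that the $L^1$ hypothesis means ``no further regularity assumptions on $F$ are required'' is exactly the assertion that needs proof. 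The repair is to show that the periodization of $F$ is continuous at the identity --- e.g.\ by exhibiting it as a product of convolutions of $L^2$ functions on the quotient, which is where the Bessel hypotheses enter --- or equivalently to run Janssen's argument through the Walnut representation (Theorem~\ref{th:Gabor-walnut-dense} and Remark~\ref{rem:t-alpha-unif-bounded-and-fourier-series}). \emph{(2) The density argument does not close.} The $L^1$ hypothesis concerns the fixed pair $(g,h)$ and is neither inherited by nor uniform over Schwartz--Bruhat approximants $g_n,h_n$; Cauchy--Schwarz only bounds the integrand in $L^\infty(\Gamma^\perp\times\Lambda^\perp)$, which gives no control of the integral over the (generally non-compact) adjoint group, and Bessel duality (Proposition~\ref{thm:bessel-duality}) is available only for co-compact subgroups, so it cannot be invoked here. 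Hence ``continuity of the right-hand side in $(g,h)$'' is asserted rather than established, and without it the passage from $S(G)$ to $L^2(G)$ fails.
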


\section{Co-compact Gabor systems and their adjoint systems}
\label{sec:duality-results}

The Janssen representation shows that the frame operator of a co-compact Gabor system can be written in terms of the system
$\{E_{\beta}T_{\alpha}g\}_{\alpha\in\Gamma^{\perp},\beta\in\Gamma^{\perp}}$. In this section we present further results
that connect a co-compact Gabor system $\gaborset$ with its \emph{adjoint} Gabor system
$\{E_{\beta}T_{\alpha}g\}_{\alpha\in\Gamma^{\perp},\beta\in\Gamma^{\perp}}$.

The time-frequency shifts in a Gabor system and its adjoint system are characterized by the fact
that they commute \cite[Section~3.5.3]{MR1601107},\cite[Lemma 7.4.1]{MR1843717}. That is, for $(\lambda,\gamma)\in \Lambda\!\times\! \Gamma$ the point $(\alpha,\beta)\subset G\times\ghat$ belongs to $\Gamma^{\perp}\!\times\!\Lambda^{\perp}$ if and only if
\[ 
(E_{\gamma}T_{\lambda})( E_{\beta} T_{\alpha}) = (E_{\beta} T_{\alpha}
)(E_{\gamma}T_{\lambda}).  
\] 


We remind the reader our convention equipping the annihilator of $\Lambda$ and $\Gamma$ with the counting measure. The following results will, therefore, only after appropriate modification take the familiar form of the lattice Gabor theory in, e.g., $L^2(\R^n)$.

\subsection{Bessel bound duality}
\label{sec:bessel-bound-duality}

Bessel bound duality states that a co-compact Gabor system is a Bessel system with bound $B$ if, and only
 if, the discrete adjoint Gabor system
 $\{E_{\beta}T_{\alpha}g\}_{\alpha\in\Gamma^{\perp},\beta\in\Lambda^{\perp}}$ is a Bessel system
 with bound $B$. The result is stated in Proposition \ref{thm:bessel-duality}, and its proof is divided into two parts, Lemma \ref{le:bessel-implies-adj-bessel} and
 \ref{le:adj-gab-bessel-implies-gabor-bessel}. 

We begin with the definition of the operator $L_x:D(L_x)\to L^2(\LL)$ with $D(L_x) \subset
\ell^2(\LG^\perp)$. Let $x \in G$, let $g\in L^2(G)$ be given and let
$\{c_{\alpha}\}_{\alpha\in\Gamma^{\perp}}$ be a finite sequence. Then for almost every $x\in G$ we
define the linear operator
\begin{equation} 
  \label{eq:def-pre-gram-Ux} 
  L_x (\{c_{\alpha}\}_{\alpha\in\Gamma^{\perp}}) = \lambda \mapsto  \sum_{\alpha\in\Gamma^{\perp}} g(x-\lambda-\alpha) \, c_{\alpha} , \quad D(L_x)=c_{00}(\LG^\perp).
\end{equation}
Note that $L_x$ essentially (up to complex conjugations, \etc) is the analysis operator, as
introduced in (\ref{eq:analysis-op-fibers}), of the family of fibers associated with the TI system
$\seq{T_\gamma \cF^{-1}T_\lambda g}_{\gamma \in \LG, \lambda \in \LL}$. In light of
Proposition~\ref{thm:TI-gramian-pre-bessel-equi}, we therefore have
the following result.
\begin{lemma} 
  \label{le:pre-gram-Ux-linear-and-bounded} 
  If $\gaborset$ is a Bessel system with bound $B$, then for almost every $x\in G$ the operator
  $L_x$ extends to a linear, bounded operator with domain $\ell^2(\Gamma^{\perp})$ and 
  bound $B^{1/2}$.
\end{lemma}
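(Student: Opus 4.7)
The plan is to reduce to the fiberization and dual-Gramian machinery already at our disposal, by identifying $L_x$ with (a harmless unitary conjugate of) the analysis operator of the fibers associated with the TI system that is unitarily equivalent to the Gabor system, and then to upgrade the ``a.e.\ $x \in X$'' statement to ``a.e.\ $x \in G$'' by a periodicity argument.

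First, since $\gaborset$ is unitarily equivalent (via $\cF$) to the co-compact TI system $\{T_\gamma \cF^{-1}T_\lambda g\}_{\gamma \in \LG, \lambda \in \LL}$, the Bessel bound $B$ transfers. Applying the Bessel half of Theorem~\ref{thm:fibers-characterization} (equivalently, the Bessel analogue of Proposition~\ref{thm:char-frame-prop-LG-Gabor}), this is in turn equivalent to the assertion that, for almost every $x \in X$, where $X \subset G$ denotes a Borel section of $\LG^\perp$, the fiber family $\{v_\lambda(x)\}_{\lambda \in \LL}$ with $v_\lambda(x) := \{g(x+\lambda+\alpha)\}_{\alpha \in \LG^\perp} \in \ell^2(\LG^\perp)$ is a Bessel family with bound $B$.

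Next, for any finite sequence $c \in c_{00}(\LG^\perp)$, one computes $\norm[L^2(\LL)]{L_x c}^2$ directly: using the substitutions $\lambda \mapsto -\lambda$ and $\alpha \mapsto -\alpha$, which are measure-preserving on $\LL$ and $\LG^\perp$ respectively, one obtains
\[
\norm[L^2(\LL)]{L_x c}^2 = \int_\LL \abs{\innerprod[\ell^2(\LG^\perp)]{\tilde c}{\overline{v_\lambda(x)}}}^2 d\mu_\LL(\lambda),
\]
where $\tilde c_\alpha := c_{-\alpha}$ has $\norm[\ell^2]{\tilde c} = \norm[\ell^2]{c}$. Since $\{v_\lambda(x)\}_{\lambda}$ is Bessel with bound $B$, the entrywise conjugated family $\{\overline{v_\lambda(x)}\}_{\lambda}$ inherits the same Bessel bound; therefore $\norm[L^2(\LL)]{L_x c}^2 \le B \norm[\ell^2]{c}^2$. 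By density, $L_x$ extends to a bounded linear operator on all of $\ell^2(\LG^\perp)$ with $\norm{L_x} \le B^{1/2}$ for a.e.\ $x \in X$.

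Finally, we pass from ``a.e.\ $x \in X$'' to ``a.e.\ $x \in G$'' via periodicity. Any $x \in G$ decomposes uniquely as $x = x_0 + \beta$ with $x_0 \in X$ and $\beta \in \LG^\perp$, and the substitution $\alpha \mapsto \alpha - \beta$ in the defining sum yields $L_x(\{c_\alpha\}) = L_{x_0}(\{c_{\alpha+\beta}\})$. Since the index shift $\{c_\alpha\} \mapsto \{c_{\alpha+\beta}\}$ is an isometry of $\ell^2(\LG^\perp)$, the bound $B^{1/2}$ is preserved. I expect no serious obstacle in this plan; the only care required will be the bookkeeping around the reflections and complex conjugations that identify $L_x$ with the analysis operator of the fibers, but this is cosmetic.
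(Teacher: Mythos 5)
Your proof is correct and follows essentially the same route as the paper, which disposes of the lemma in a single remark by identifying $L_x$ (up to conjugation and reflection) with the analysis operator of the fibers of the TI system $\{T_\gamma \cF^{-1}T_\lambda g\}_{\gamma\in\LG,\lambda\in\LL}$ and invoking Proposition~\ref{thm:TI-gramian-pre-bessel-equi}. The only difference is that you make explicit two points the paper leaves implicit: the isometric bookkeeping ($\lambda\mapsto-\lambda$, $\alpha\mapsto-\alpha$, entrywise conjugation) and the upgrade from ``a.e.\ $x\in X$'' to ``a.e.\ $x\in G$'' via the $\LG^\perp$-shift $L_x(\{c_\alpha\})=L_{x_0}(\{c_{\alpha+\beta}\})$, both of which are handled correctly.
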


 Let us now show one direction of the Bessel duality between a
 co-compact Gabor system and its  adjoint.

\begin{lemma}
  \label{le:bessel-implies-adj-bessel} Let $\Lambda\subset G$ and $\Gamma\subset \ghat$ be closed,
  co-compact subgroups. If $\gaborset$ is a Bessel system with bound $B$, then
  $\{E_{\beta}T_{\alpha}g\}_{\alpha\in\Gamma^{\perp},\beta\in\Lambda^{\perp}}$ is a Bessel system
  with bound $B$.
\end{lemma}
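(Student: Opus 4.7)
The plan is to reduce the Bessel inequality for the discrete adjoint system $\{E_\beta T_\alpha g\}_{\alpha\in\Gamma^\perp,\beta\in\Lambda^\perp}$ to the operator bound on $L_x$ supplied by Lemma~\ref{le:pre-gram-Ux-linear-and-bounded}. That lemma gives $\norm{L_x^{\ast}} \le B^{1/2}$ for a.e.\ $x \in G$, where $L_x^{\ast}: L^2(\Lambda) \to \ell^2(\Gamma^{\perp})$ is the Hilbert space adjoint of the operator in~\eqref{eq:def-pre-gram-Ux} and is explicitly given by $(L_x^{\ast} h)_\alpha = \int_{\Lambda} \overline{g(x-\lambda-\alpha)}\, h(\lambda)\, d\mu_\Lambda(\lambda)$.

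First I would fix $f \in L^2(G)$ and $\alpha \in \Gamma^\perp$. Since every $\beta \in \Lambda^\perp$ is trivial on $\Lambda$, Weil's formula rewrites
\[
\langle f, E_\beta T_\alpha g\rangle = \int_{G/\Lambda} \tilde h_\alpha(\dot x)\, \overline{\beta(\dot x)}\, d\mu_{G/\Lambda}(\dot x), \qquad \tilde h_\alpha(\dot x) := \int_{\Lambda} f(x+\lambda)\,\overline{g(x+\lambda-\alpha)}\, d\mu_\Lambda(\lambda).
\]
Thus $\langle f, E_\beta T_\alpha g\rangle$ is the Fourier coefficient of $\tilde h_\alpha \in L^2(G/\Lambda)$ at $\beta \in \widehat{G/\Lambda} \cong \Lambda^\perp$, and Plancherel on the compact quotient $G/\Lambda$ (with the counting measure on $\Lambda^\perp$) yields $\sum_{\beta \in \Lambda^\perp} \vert \langle f, E_\beta T_\alpha g\rangle \vert^2 = \int_{G/\Lambda} \vert \tilde h_\alpha(\dot x) \vert^2 d\mu_{G/\Lambda}(\dot x)$.

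Next, after the substitution $\lambda \mapsto -\lambda$ inside $\tilde h_\alpha$, I would recognize $\tilde h_\alpha(\dot x) = (L_x^{\ast} f_x)_\alpha$, where $f_x \in L^2(\Lambda)$ is defined for a.e.\ $x$ by $f_x(\lambda) := f(x-\lambda)$. Applying Lemma~\ref{le:pre-gram-Ux-linear-and-bounded} pointwise then gives $\sum_{\alpha \in \Gamma^{\perp}} \vert \tilde h_\alpha(\dot x) \vert^2 = \norm{L_x^{\ast} f_x}^2_{\ell^2} \le B \norm{f_x}^2_{L^2(\Lambda)}$ for a.e.\ $\dot x$, and a second application of Weil's formula collapses $\int_{G/\Lambda} \norm{f_x}^2_{L^2(\Lambda)}\, d\mu_{G/\Lambda}(\dot x)$ back to $\norm{f}^2_{L^2(G)}$. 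Combining these estimates via Tonelli's theorem produces the desired Bessel bound $\sum_{\alpha,\beta} \vert \langle f, E_\beta T_\alpha g\rangle \vert^2 \le B \norm{f}^2$.

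The main obstacle will be the bookkeeping of measure normalizations: the counting measure on $\Lambda^\perp$ must be dual to the chosen $\mu_{G/\Lambda}$, the measure $\mu_\Lambda$ appearing in $L_x$ must be the one implicit in the Bessel bound for $\gaborset$, and Weil's formula $\int_G = \int_{G/\Lambda}\int_\Lambda$ must be consistent with both. If any one of these three normalizations is out of step with the others, the constant $B$ is replaced by a positive multiple of itself. Working systematically with the dual measure conventions from Section~\ref{sec:LCA}, these constants all cancel and the adjoint system inherits the bound $B$ exactly.
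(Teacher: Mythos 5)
Your argument is correct, and it uses the same two ingredients as the paper's proof---the bound $\norm{L_x}\le B^{1/2}$ from Lemma~\ref{le:pre-gram-Ux-linear-and-bounded} and the combination of Weil's formula with Parseval on the compact quotient $G/\Lambda\cong\widehat{\Lambda^{\perp}}$---but you run the estimate on the analysis side where the paper runs it on the synthesis side. The paper fixes a finite sequence $c$, writes the synthesis operator of the adjoint system as $\SynthesisOperator c=\sum_{\alpha}M_{m_\alpha}T_\alpha g$ with $m_\alpha(x)=\sum_{\beta}c(\alpha,\beta)\beta(x)$, computes $\norm{\SynthesisOperator c}^2=\int_{G/\Lambda}\normsmall[L^2(\Lambda)]{L_x m_{\alpha}(\dot x)}^2\,d\mu_{G/\Lambda}(\dot x)$ as in \eqref{eq:adjoint-gabor-Tc-norm}, bounds this by $B\norm{c}^2$ using $\norm{L_x}\le B^{1/2}$, and then cites the characterization of Bessel systems by bounded synthesis operators. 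You instead fix $f\in L^2(G)$, recognize $\innerprod{f}{E_\beta T_\alpha g}$ as the $\beta$-th Fourier coefficient of the $\Lambda$-periodization $\tilde h_\alpha(\dot x)=(L_x^{\ast}f_x)_\alpha$, and bound the analysis sum directly via $\norm{L_x^{\ast}}=\norm{L_x}\le B^{1/2}$; the two computations are adjoint to one another. Your route avoids both the appeal to the synthesis-operator characterization and the density argument over finite sequences, at the small cost of having to justify that $\tilde h_\alpha\in L^2(G/\Lambda)$ \emph{before} applying Plancherel: a priori Weil's formula only gives $\tilde h_\alpha\in L^1(G/\Lambda)$, and square-integrability should be extracted from the pointwise estimate $\sum_{\alpha}\abssmall{\tilde h_\alpha(\dot x)}^2\le B\int_{\Lambda}\abssmall{f(x-\lambda)}^2\,d\mu_\Lambda(\lambda)$, whose right-hand side integrates to $B\norm{f}^2$ over $G/\Lambda$. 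Your closing remark on normalizations matches the paper's convention exactly: equipping $\Lambda^{\perp}$ with the counting measure forces $\mu_{G/\Lambda}$ to be the probability measure, so Parseval on $G/\Lambda$ holds with constant one (cf.\ \eqref{eq:norm-m-lambda}), and Weil's formula then fixes $\mu_{\Lambda}$; hence the adjoint system inherits the bound $B$ with no extra factor.
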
 
\begin{proof}
  We consider the discrete Gabor system
  $\{E_{\beta}T_{\alpha}g\}_{\alpha\in\Gamma^{\perp},\beta\in\Lambda^{\perp}}$ and its associated
  synthesis mapping
  \[
  \SynthesisOperator :\ell^2(\Gamma^{\perp}\times\Lambda^{\perp})\to L^2(G), \ \ \SynthesisOperator
  c(\alpha,\beta) = \sum_{\alpha\in\Gamma^{\perp}}\sum_{\beta\in\Lambda^{\perp}}c(\alpha,\beta)
  E_{\beta}T_{\alpha}g.
  \]
  We will show that $\SynthesisOperator$ is a well-defined, linear and bounded operator with $\Vert
  \SynthesisOperator \Vert \le B^{1/2}$; the result then follows from
  \cite[Theorem~3.2.3]{MR1946982}. To this end, let $c\in \ell^2(\Gamma^\perp\times\Lambda^{\perp})$ be a finite
  sequence and for each $x\in G$ consider
  \begin{equation} \label{eq:def-m-alpha} m_{\alpha}(x) := \sum_{\beta\in \Lambda^{\perp}}
    c(\alpha,\beta) \beta(x), \ \ \alpha\in\Gamma^\perp.
  \end{equation}
  It is clear that $\{m_{\alpha}(x)\}_{\alpha\in\Gamma^{\perp}}$ is a finite sequence as well.  Note
  that $m_{\alpha}$ as a function of $x\in G$ is constant on cosets of $\Lambda$. Thus $m_{\alpha}$
  defines a function on $G/\Lambda$, which we will denote by $m_{\alpha}(\dot x)$.  By use of the
  identification $G/\Lambda\cong {\widehat{\Lambda^{\perp}}}$ and the Parseval equality, we find
  \begin{align} 
    \int_{G/\Lambda} \vert m_{\alpha}(\dot x) \vert^2 \, d\mu_{G/\Lambda}(\dot x) & = \int_{\widehat{\Lambda^{\perp}}} \Big\vert \sum_{\beta\in\Lambda^{\perp}} c(\alpha,\beta) \beta(x) \, \Big\vert^2 \, d\mu_{\widehat{\Lambda^{\perp}}}(x) \nonumber \\
    & = \Vert
    c(\alpha,\beta) \Vert_{\ell^2(\Lambda^{\perp})}^2 = \sum_{\beta\in\Lambda^{\perp}} \vert
    c(\alpha,\beta)\vert^2 .
    \label{eq:norm-m-lambda}
  \end{align}
  By
  definition we have that
  \[
  \mathcal \SynthesisOperator c=\sum_{\alpha\in\Gamma^{\perp}}\sum_{\beta\in\Lambda^{\perp}}
  c(\alpha,\beta) E_{\beta}T_{\alpha} g = \sum_{\alpha\in\Gamma^{\perp}} M_{m_{\alpha}} T_{\alpha}
  g.
  \] 
  Using this expression together with Weil's formula we find the following for the norm of
  $\SynthesisOperator c$:
  \begin{align} 
    \Vert \SynthesisOperator c \Vert^2 & = \int_G \vert \SynthesisOperator c(x) \vert^2 \, d\mu_{G}(x) =  \int_{G} \sum_{\alpha,\alpha'\in\Gamma^{\perp}} m_{\alpha}(x) g(x-\alpha) \, \overline{m_{\alpha'}(x) g(x-\alpha')} \, d\mu_G(x) \nonumber \\
    & = \int_{G/\Lambda} \int_{\Lambda} \Big( \sum_{\alpha\in\Gamma^{\perp}} m_{\alpha}(\dot x) g(x-\lambda-\alpha) \Big) \Big( \sum_{\alpha'\in\Gamma^{\perp}} \overline{m_{\alpha'}(\dot x) g(x-\lambda-\alpha')} \Big) \, d\mu_{\Lambda}(\lambda) \, d\mu_{G/\Lambda}(\dot x) \nonumber \\
    &  = \int_{G/\Lambda} \Vert L_x m_{\alpha}(\dot x)\Vert^2_{L^2(\Lambda)} \, d\mu_{G/\Lambda}(\dot
    x). \label{eq:adjoint-gabor-Tc-norm}
\end{align}
The rearranging of the summation is possible because the summations over $\Gamma^{\perp}$ are
finite. Since $\gaborset$ is a Bessel system with bound $B$, we know by Lemma
\ref{le:pre-gram-Ux-linear-and-bounded} that $L_x$ is bounded by $B^{1/2}$. We therefore have that
\[ 
\Vert L_x m_{\alpha}(\dot x) \Vert^2 \le B \, \Vert m_{\alpha}(\dot x) \Vert^2 = B
\sum_{\alpha\in\Gamma^{\perp}} \vert m_{\alpha}(\dot x) \vert^2.
\]
Using this together with \eqref{eq:norm-m-lambda} and \eqref{eq:adjoint-gabor-Tc-norm} yields the
following inequality.
\[ 
\Vert\SynthesisOperator c \Vert^2 \le B \int_{G/\Lambda} \sum_{\alpha\in\Gamma^{\perp}} \vert
m_{\alpha}(x)\vert^2 \, d\mu_{G/\Lambda}(\dot x) = B \sum_{\alpha\in\Gamma^{\perp}}
\sum_{\beta\in\Lambda^{\perp}} \vert c(\alpha,\beta) \vert^2 = B \, \Vert c
\Vert_{\ell^2(\Gamma^{\perp}\times\Lambda^{\perp})}^2.
\]
We conclude that $\SynthesisOperator$ is bounded by $B^{1/2}$ and so
$\{E_{\beta}T_{\alpha}g\}_{\alpha\in\Gamma^{\perp},\beta\in\Lambda^{\perp}}$ is a Bessel system
with bound $B$.
\end{proof}

Note that in the classical discrete and co-compact setting we simply apply
Lemma \ref{le:bessel-implies-adj-bessel} to the adjoint Gabor system, as it would also be discrete
and co-compact. However, in our case the Gabor system $\gaborset$ is co-compact and the adjoint
system is discrete (and not necessarily co-compact). We thus need another result for the reverse direction.

In order to prove the reverse direction, Lemma \ref{le:adj-gab-bessel-implies-gabor-bessel}, we will reuse calculations from
Lemma \ref{le:bessel-implies-adj-bessel}. Furthermore, the proof also relies on Lemma \ref{le:lemma22-for-S0-gabor}.
Adapted to co-compact $\Gamma\subset\ghat$ it states that for all $f\in C_c(G)$
\begin{equation} \label{eq:lemma22-for-gabor} \int_{\Gamma} \vert \langle f, E_{\gamma} T_{\lambda} g\rangle \vert^2 \, d\mu_{\Gamma}(\gamma) = \int_G \sum_{\alpha\in\Gamma^{\perp}} f(x) \overline{f(x-\alpha)} \, \overline{T_{\lambda}g(x)} \, T_{\lambda}g(x-\alpha)\, d\mu_G(x).
\end{equation}

\begin{lemma} 
  \label{le:adj-gab-bessel-implies-gabor-bessel}
  Let $\Lambda\subset G$ and $\Gamma\subset \ghat$ be closed, co-compact subgroups. If
  $\{E_{\beta}T_{\alpha}g\}_{\alpha\in\Gamma^{\perp},\beta\in\Lambda^{\perp}}$ is a Bessel system
  with bound $B$, then $\gaborset$ is a Bessel system with bound $B$.
\end{lemma}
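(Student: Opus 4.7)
The plan is to reverse the calculation from the proof of Lemma~\ref{le:bessel-implies-adj-bessel}, transferring the Bessel bound $B$ of the adjoint discrete system back to a pointwise operator bound on $L_x$, and then to invoke Proposition~\ref{thm:TI-gramian-pre-bessel-equi} (for the TI system unitarily equivalent to $\gaborset$) to conclude.

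First, I would start from the identity \eqref{eq:adjoint-gabor-Tc-norm} derived in the proof of Lemma~\ref{le:bessel-implies-adj-bessel}, namely
\[
\Vert \SynthesisOperator c \Vert^2 = \int_{G/\Lambda} \Vert L_x (\{m_\alpha(\dot x)\}_{\alpha\in\Gamma^\perp}) \Vert_{L^2(\Lambda)}^2 \, d\mu_{G/\Lambda}(\dot x),
\]
valid for any finite sequence $c\in \ell^2(\Gamma^\perp\times\Lambda^\perp)$, where $m_\alpha(\dot x) = \sum_{\beta\in\Lambda^\perp} c(\alpha,\beta)\beta(x)$. A change of variables shows that $x\mapsto \Vert L_x d \Vert_{L^2(\Lambda)}^2$ is $\Lambda$-invariant for any finite sequence $d$, so it descends to a well-defined function $F_d:G/\Lambda\to [0,\infty)$.

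Next, I would decouple the two indices of $c$. Given any finite sequence $(d_\alpha)_{\alpha\in\Gamma^\perp}$ and any $\phi\in L^2(G/\Lambda)$ with Fourier coefficients $\widehat{\phi}(\beta)$ under the identification $G/\Lambda\cong \widehat{\Lambda^\perp}$, set $c(\alpha,\beta) := d_\alpha\, \widehat{\phi}(\beta)$. Then $m_\alpha(\dot x)=d_\alpha\,\phi(\dot x)$ and Parseval gives $\Vert c\Vert_{\ell^2}^2=\Vert d\Vert_{\ell^2}^2\,\Vert \phi\Vert_{L^2(G/\Lambda)}^2$. By linearity of $L_x$ in its sequence argument, combining with the assumed bound $\Vert\SynthesisOperator c\Vert^2\le B\Vert c\Vert^2$ yields
\[
\int_{G/\Lambda} |\phi(\dot x)|^2\, F_d(\dot x)\, d\mu_{G/\Lambda}(\dot x) \;\le\; B\,\Vert d\Vert_{\ell^2}^2\,\Vert \phi\Vert_{L^2(G/\Lambda)}^2.
\]

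The main obstacle is then to pass from this integrated inequality to a pointwise bound on $\Vert L_x\Vert$ for a.e.\ $x$. For this I would let $\phi$ range over normalized indicators $|E|^{-1/2}\charfct{E}$ of measurable sets $E\subset G/\Lambda$ of positive finite measure, deducing that the mean of $F_d$ over every such $E$ is at most $B\Vert d\Vert^2$, hence $F_d(\dot x)\le B\Vert d\Vert^2$ for a.e.\ $\dot x$. Running this over a countable dense subset of $c_{00}(\Gamma^\perp)$ and intersecting the resulting full-measure sets, we conclude that $L_x$ extends for a.e.\ $x\in G$ to a bounded linear operator on $\ell^2(\Gamma^\perp)$ with $\Vert L_x\Vert\le \sqrt{B}$. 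Since $L_x$ is (via the fiberization isomorphism) the analysis operator of the fibers of the TI system $\set{T_\gamma\cF^{-1}T_\lambda g}_{\gamma\in\Gamma,\lambda\in\Lambda}$, which is unitarily equivalent to $\gaborset$, Proposition~\ref{thm:TI-gramian-pre-bessel-equi} delivers the desired Bessel bound $B$ for $\gaborset$.
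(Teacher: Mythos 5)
Your proposal is correct, and its first half coincides with the paper's argument: both reuse the identity \eqref{eq:adjoint-gabor-Tc-norm} and the boundedness of $\SynthesisOperator$ to extract the pointwise bound $\norm{L_x}\le\sqrt{B}$ for a.e.\ $x$. In fact you are more careful than the paper at the one delicate point: the paper passes from the integrated inequality \eqref{eq:lemma-adj-bessel-implies-bessel-eq} to the pointwise inequality \eqref{eq:lemma-adj-bessel-implies-bessel-eq2} with a bare ``this implies,'' whereas you supply the standard localization (decoupling $c(\alpha,\beta)=d_\alpha\widehat{\phi}(\beta)$, averaging over sets $E$, and intersecting over a countable dense family of $d$'s). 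One small technical remark there: the identity \eqref{eq:adjoint-gabor-Tc-norm} is only established for \emph{finite} sequences $c$, and a normalized indicator $\abs{E}^{-1/2}\charfct{E}$ generally has infinitely many nonzero Fourier coefficients; you should either take $\phi$ ranging over trigonometric polynomials on $G/\Lambda$ (which suffices, since $F_d\in L^1(G/\Lambda)$ by testing against $\phi\equiv 1$, and nonnegative continuous functions are uniformly approximated by squares of trigonometric polynomials) or approximate the indicator by its Fourier partial sums and invoke Fatou. Where you genuinely diverge from the paper is the endgame: having obtained $\esssup_x\norm{L_x}\le\sqrt{B}$, you appeal to Proposition~\ref{thm:TI-gramian-pre-bessel-equi} applied to the TI system $\set{T_\gamma\cF^{-1}T_\lambda g}_{\gamma\in\LG,\lambda\in\LL}$, exactly mirroring how the paper deduces Lemma~\ref{le:pre-gram-Ux-linear-and-bounded} in the forward direction; since that proposition is an equivalence, this is legitimate and arguably cleaner. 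The paper instead avoids invoking the fiberization machinery here and verifies the Bessel bound directly: it substitutes the concrete finite sequence $c=\set{\overline{f(x-\alpha)}}_{\alpha\in\LG^\perp}$ for $f\in C_c(G)$, computes $\int_{G/\Gamma^{\perp}}\norm[L^2(\Lambda)]{L_xc}^2\,d\mu_{G/\Gamma^{\perp}}(\dot x)$ via Weil's formula and \eqref{eq:lemma22-for-gabor}, and identifies it with $\int_\Lambda\int_\Gamma\abs{\innerprods{f}{E_\gamma T_\lambda g}}^2$, concluding by density. Your route buys brevity and reuses already-proved structure; the paper's buys self-containedness and makes the role of Lemma~\ref{le:lemma22-for-S0-gabor} explicit. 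Either is acceptable.
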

\begin{proof} 
  Note that for finite sequences $c\in \ell^2(\Gamma^{\perp}\times\Lambda^{\perp})$ the calculations
  in \eqref{eq:adjoint-gabor-Tc-norm} still hold. We let $m_{\alpha}(x)$ be given as in
  \eqref{eq:def-m-alpha}. By assumption we know that the synthesis mapping $\SynthesisOperator$ of
  the adjoint Gabor system
  $\{E_{\beta}T_{\alpha}g\}_{\alpha\in\Gamma^{\perp},\beta\in\Lambda^{\perp}}$ is bounded by
  $B^{1/2}$. We therefore have that
  \[
  \Vert F c \Vert^2 = \int_{G/\Lambda} \Vert L_x m_{\alpha}(\dot x)\Vert^2_{L^2(\Lambda)} \,
  d\mu_{G/\Lambda}(\dot x) \le B \, \Vert c \Vert_{\ell^2(\Gamma^{\perp}\times\Lambda^{\perp})} \quad \forall c\in \ell^2(\Gamma^{\perp}\times\Lambda^{\perp}).  \]
  By use of \eqref{eq:norm-m-lambda} we rewrite the norm of $c$ and find
  \begin{equation}
    \label{eq:lemma-adj-bessel-implies-bessel-eq} 
    \int_{G/\Lambda} \Vert L_x m_{\alpha}(\dot x)\Vert^2_{L^2(\Lambda)} \, d\mu_{G/\Lambda}(\dot x) \le B \, \int_{G/\Lambda} \Vert m_{\alpha}(\dot x) \Vert_{\ell^2(\Gamma^{\perp})}^2 \, d\mu_{G/\Lambda}(\dot x).
  \end{equation}
  This implies that
  \begin{equation}
    \label{eq:lemma-adj-bessel-implies-bessel-eq2} 
    \Vert L_x m_{\alpha}(\dot x)\Vert^2_{L^2(\Lambda)} \le B \, \Vert m_{\alpha}(\dot x) \Vert_{\ell^2(\Gamma^{\perp})}^2 .
  \end{equation}
  If $c(\alpha,\beta) = 0$ for all $\beta \ne 1$, then $m_{\alpha}(x) = c(\alpha,1)$. Therefore the
  mapping from all finite $c\in \ell^2(\Gamma^{\perp}\times\Lambda^{\perp})$ to $m_{\alpha}(x)$ in
  \eqref{eq:def-m-alpha} is a surjection onto all finite sequences indexed by $\Gamma^{\perp}$. From
  \eqref{eq:lemma-adj-bessel-implies-bessel-eq2} we can therefore conclude that $L_x$ is a bounded
  operator from all finite sequences to $L^2(\Lambda)$ with $\Vert L_x \Vert \le B^{1/2}$. Since
  $L_x$ is also linear, it uniquely extends to a bounded operator from all of
  $\ell^2(\Gamma^{\perp})$ to $L^2(\Lambda)$.

  Let now $f\in C_c(G)$ and consider the finite sequence $c =
  \{\overline{f(x-\alpha)}\}_{\alpha\in\Gamma^{\perp}}$. Replacing $\{m_{\alpha}(\dot
  x)\}_{\alpha\in\Gamma^{\perp}}$ with $c$ in \eqref{eq:lemma-adj-bessel-implies-bessel-eq} yields
  the following inequality:
  \begin{equation} 
    \label{eq:Adj-Gab-Bessel-Implies-Gab-Bessel-Ux-Bessel-bound-eq}
    \int_{G/\Gamma^{\perp}} \Vert L_x c \Vert_{L^2(\Lambda)}^2 \, d\mu_{G/\Gamma^{\perp}}(\dot x)
    \le B \, \int_{G/\Gamma^{\perp}} \sum_{\alpha\in\Gamma^{\perp}} \vert f(x-\alpha)\vert^2 \,
    d\mu_{G/\Gamma^{\perp}}(\dot x) = B \, \Vert f \Vert_{L^2(G)}^2.
  \end{equation} 
  Concerning the left hand side of \eqref{eq:Adj-Gab-Bessel-Implies-Gab-Bessel-Ux-Bessel-bound-eq},
  we find that
  \begin{align}
    &  \int_{G/\Gamma^{\perp}} \Vert L_x c \Vert_{L^2(\Lambda)}^2 \, d\mu_{G/\Gamma^{\perp}}(\dot x) \nonumber \\
    & = \int_{G/\Gamma^{\perp}} \int_{\Lambda} \sum_{\alpha,\alpha'\in\Gamma^{\perp}} g(x-\lambda-\alpha) \, \overline{f(x-\alpha)}\, \overline{g(x-\lambda-\alpha')} \, f(x-\alpha') \, d\lambda \, d\mu_{G/\Gamma^{\perp}} \nonumber \\
    & = \int_{G/\Gamma^{\perp}} \int_{\Lambda} \sum_{\alpha,\alpha'\in\Gamma^{\perp}} g(x-\lambda-\alpha'-\alpha) \, \overline{f(x-\alpha'-\alpha)}\, \overline{g(x-\lambda-\alpha')} \, f(x-\alpha') \, d\lambda \, d\mu_{G/\Gamma^{\perp}} \nonumber \\
    & = \int_{G} \int_{\Lambda} \sum_{\alpha\in\Gamma^{\perp}} g(x-\lambda-\alpha) \, \overline{f(x-\alpha)}\, \overline{g(x-\lambda)} \, f(x) \, d\lambda \, d\mu_{G}(x) \nonumber \\
    & = \int_{\Lambda}\int_{\Gamma} \vert \langle f, E_{\gamma}T_{\lambda} g \rangle \vert^2 \,
    d\mu_{\Gamma}(\gamma) \, d\mu_{\Lambda}(\lambda). \label{eq:from-Ux-to-gabor-bessel-bound}
  \end{align}
  The last equality follows by \eqref{eq:lemma22-for-gabor}. From
  \eqref{eq:Adj-Gab-Bessel-Implies-Gab-Bessel-Ux-Bessel-bound-eq} and
  \eqref{eq:from-Ux-to-gabor-bessel-bound} we conclude that
  \[
  \int_{\Lambda}\int_{\Gamma} \vert \langle f, E_{\gamma}T_{\lambda} g \rangle \vert^2 \,
  d\mu_{\Gamma}(\gamma) \, d\mu_{\Lambda}(\lambda) \le B \, \Vert f \Vert^2 \quad \text{for all } f\in
  C_c(G).
  \] 
  Since this holds for all $f$ in a dense subset of $L^2(G)$ we draw the conclusion that $\gaborset$
  is a Bessel system with bound $B$.
\end{proof}

The combination of Lemma \ref{le:bessel-implies-adj-bessel} and
\ref{le:adj-gab-bessel-implies-gabor-bessel} yields the Bessel bound duality between a co-compact
Gabor system and its discrete adjoint system.

\begin{proposition}
\label{thm:bessel-duality}
Let $B>0$ and $g,h\in L^2(G)$ be given. Let $\Gamma \subset G$ and
$\Lambda \subset \ghat$ be closed, co-compact
subgroups. Then $\gaborset$ is a Bessel system with bound $B$ if, and only if, 
$\{E_{\beta}T_{\alpha}g\}_{\alpha\in\Gamma^{\perp},\beta\in\Lambda^{\perp}}$ is a Bessel system with bound
$B$.
\end{proposition}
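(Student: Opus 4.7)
The plan is to simply combine the two preceding lemmas, which have been stated precisely so that this proposition follows without any further work. Specifically, Lemma \ref{le:bessel-implies-adj-bessel} gives the forward implication: if $\gaborset$ is Bessel with bound $B$, then the discrete adjoint system $\{E_{\beta}T_{\alpha}g\}_{\alpha\in\Gamma^{\perp},\beta\in\Lambda^{\perp}}$ is Bessel with the same bound $B$. Lemma \ref{le:adj-gab-bessel-implies-gabor-bessel} supplies the reverse. Since both lemmas are already stated with matching constants, I would write essentially a one-line proof that cites both lemmas in turn.

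What is worth emphasizing, and what made the two lemmas nontrivial in the first place, is that the Bessel bound is preserved \emph{exactly} — not merely up to a multiplicative constant depending on measure normalizations. The common mechanism behind this was the analysis-type operator $L_x$ from (\ref{eq:def-pre-gram-Ux}) together with the pivotal identity (\ref{eq:adjoint-gabor-Tc-norm}), which expresses $\|\SynthesisOperator c\|^2$ in terms of $\int_{G/\Lambda}\|L_x m_\alpha(\dot x)\|^2\, d\mu_{G/\Lambda}(\dot x)$; this identity was used in one direction to bound the adjoint synthesis operator via the fiberwise bound on $L_x$ provided by Lemma \ref{le:pre-gram-Ux-linear-and-bounded}, and in the reverse direction to transfer a bound on the adjoint synthesis operator back to a uniform bound on $L_x$ (and hence, via (\ref{eq:from-Ux-to-gabor-bessel-bound}) and (\ref{eq:lemma22-for-gabor}), back to the Bessel bound for $\gaborset$). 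There is no real obstacle in the proof of the proposition itself beyond noting this combination; the entire substance lives in the two lemmas, which is why I expect the writeup to read simply as: \emph{apply Lemma \ref{le:bessel-implies-adj-bessel} for one direction and Lemma \ref{le:adj-gab-bessel-implies-gabor-bessel} for the other}.
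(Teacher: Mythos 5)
Your proposal matches the paper exactly: the paper's ``proof'' of Proposition~\ref{thm:bessel-duality} is just the sentence that the combination of Lemma~\ref{le:bessel-implies-adj-bessel} and Lemma~\ref{le:adj-gab-bessel-implies-gabor-bessel} yields the result, which is precisely your one-line citation of both lemmas. Your additional commentary on where the substance lives (the operator $L_x$ and the identity \eqref{eq:adjoint-gabor-Tc-norm}) is accurate but not needed for the proposition itself.
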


\subsection{Wexler-Raz biorthogonality relations}
\label{sec:wexler-raz}

We now turn our attention to a characterization of dual co-compact Gabor frame
generators by a biorthogonality
condition of the corresponding (discrete) adjoint Gabor systems.
Feichtinger and Kozek \cite{MR1601091} proved the Wexler-Raz
biorthogonality relations for Gabor systems with translation and
modulation along \emph{uniform lattices} on elementary LCA groups,
i.e., $G=\R^n \times \T^\ell \times \Z^k \times F_m$, where $F_m$ is a
finite group.
 For a proof in the discrete and finite setting and on the real line we refer to the original papers
\cite{ISI:A1990EJ96700001} and \cite{MR1350700}.

\begin{theorem} 
\label{th:WexRaz} 
Let $\Lambda\subset G$ and $\Gamma\subset \ghat$ be closed, co-compact subgroups. Let
$g,h\in L^2(G)$ and assume that $\gaborset$ and $\gaborset[h]$ are
Bessel systems.  Then the two Gabor systems are dual
frames if, and only if, 
\begin{equation} \label{eq:wexraz} \langle h, E_{\beta}T_{\alpha} g \rangle = \delta_{\beta,1}
  \delta_{\alpha,0} \qquad \forall \alpha\in\Gamma^{\perp},\beta \in \Lambda^{\perp}.
\end{equation}
\end{theorem}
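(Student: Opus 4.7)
The plan is to reduce the Wexler-Raz condition to the time-domain characterizing equation from Theorem~\ref{th:dual-Gabor-time} via the Fourier series expansion of $s_{g,h,\alpha}$ that was set up in Remark~\ref{rem:t-alpha-unif-bounded-and-fourier-series}.

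First, I would invoke Theorem~\ref{th:dual-Gabor-time}: since both $\gaborset$ and $\gaborset[h]$ are Bessel systems and $\LG$ is closed, co-compact, the duality assertion is equivalent to
\[
s_{g,h,\alpha}(x) = \int_{\Lambda} \overline{g(x-\lambda-\alpha)}\,h(x-\lambda)\, d\mu_{\Lambda}(\lambda) = \delta_{\alpha,0} \qquad \text{a.e.}\ x \in G, \ \alpha \in \Gamma^{\perp}.
\]
The new hypothesis, compared with that theorem, is that $\LL$ is itself closed and co-compact. The consequence is that $s_{g,h,\alpha}$ is $\LL$-periodic and, by Remark~\ref{rem:t-alpha-unif-bounded-and-fourier-series}(i), essentially bounded with $\norm[\infty]{s_{g,h,\alpha}} \le B_g^{1/2}B_h^{1/2}$. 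Since $G/\LL$ is compact, $s_{g,h,\alpha}$ descends to a function in $L^2(G/\LL)$.

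Next, I would apply the Fourier series expansion computed in Remark~\ref{rem:t-alpha-unif-bounded-and-fourier-series}(ii): with the dual measure convention on $G/\LL$ and $\widehat{G/\LL}\cong\LL^{\perp}$ (counting measure on the discrete group $\LL^\perp$, and hence total mass $1$ on the compact group $G/\LL$), we have
\[
s_{g,h,\alpha}(x) = \sum_{\beta \in \Lambda^\perp} c_{\alpha,\beta}\, \beta(x), \qquad c_{\alpha,\beta} = \innerprod{h}{E_\beta T_\alpha g}.
\]
By uniqueness of Fourier coefficients in $L^2(G/\Lambda)$, the pointwise (a.e.) equality $s_{g,h,\alpha}(x) = \delta_{\alpha,0}$ is equivalent to the equality of Fourier coefficients: the constant function $1$ (occurring when $\alpha=0$) has Fourier coefficient $1$ at the trivial character $\beta = 1$ and zero at all other $\beta \in \LL^\perp$, while the zero function (occurring when $\alpha \neq 0$) has all Fourier coefficients equal to zero. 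Therefore $s_{g,h,\alpha}=\delta_{\alpha,0}$ a.e.\ for every $\alpha \in \LG^\perp$ if and only if
\[
\innerprod{h}{E_\beta T_\alpha g} = \delta_{\alpha,0}\,\delta_{\beta,1} \qquad \forall\,\alpha \in \Gamma^\perp, \ \beta \in \Lambda^\perp,
\]
which is exactly \eqref{eq:wexraz}. Combining the two equivalences completes the proof.

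The main subtlety will be to make sure the measure normalizations line up so that the constant function $1$ on $G/\LL$ really produces the Fourier coefficient $1$ at $\beta=1$ and $0$ at $\beta\neq 1$; this is guaranteed by the standing dual measure convention (so that $\mu_{G/\LL}(G/\LL)=1$, since $\LL^\perp$ has counting measure) and orthogonality of non-trivial characters. Everything else is formal once the characterizing equation and the Fourier coefficient computation have been invoked.
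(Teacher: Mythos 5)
Your proposal is correct and follows essentially the same route as the paper: reduce duality to the condition $s_{g,h,\alpha}=\delta_{\alpha,0}$ via Theorem~\ref{th:dual-Gabor-time}, and then pass to \eqref{eq:wexraz} by uniqueness of the Fourier coefficients computed in Remark~\ref{rem:t-alpha-unif-bounded-and-fourier-series}. The only difference is that you spell out the measure-normalization point ($\mu_{G/\Lambda}(G/\Lambda)=1$ under the dual-measure convention), which the paper leaves implicit.
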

\begin{proof}
  Assume that the two Gabor systems are dual frames. Then, for each $\alpha \in \Gamma^{\perp}$, we
  have $s_\alpha=\delta_{\alpha,0}$ for a.e. $x \in G$.  By uniqueness of the Fourier coefficients
  \eqref{eq:s-Fourier-coeff}, the conclusion in \eqref{eq:wexraz} follows. The converse direction is
  immediate.
\end{proof}

\begin{remark} \label{rem:wexrazeq}(i).
  \begin{enumerate}[(i)] 
    \item From equation \eqref{eq:wexraz} with $\alpha'\in \Gamma^{\perp},
    \beta'\in \Lambda^{\perp}$ we find
    \[
    \delta_{\beta,1}\delta_{\alpha,0} = \langle h,
    E_{\beta}T_{\alpha}g \rangle = \langle E_{\beta'}T_{\alpha'} h,
    \overline{\beta(\alpha)}E_{\beta'\beta} T_{\alpha'+\alpha}g
    \rangle.
    \]
    And thus the Wexler-Raz biorthogonality relations
    \eqref{eq:wexraz} can equivalently be stated as
    \[
    \langle E_{\beta}T_{\alpha}h,E_{\beta'}T_{\alpha'}g\rangle =
    \delta_{\alpha,\alpha'}\delta_{\beta,\beta'} \ \ \forall \,
    \alpha,\alpha' \in\Gamma^{\perp}, \beta,\beta' \in
    \Lambda^{\perp}.
    \]
  \item For canonical dual frames $\gaborset$ and
    $\gaborset[S^{-1}g]$, the biorthogonal sequences
    $\seq{E_{\beta}T_{\alpha}g}_{\alpha \in \LG^\perp, \beta \in
      \LL^\perp}$ and $\seq{E_{\beta}T_{\alpha}S^{-1}g}_{\alpha \in
      \LG^\perp, \beta \in \LL^\perp}$ are actually dual Riesz bases
    for the subspace
    $\overline{\Span}\seq{E_{\beta}T_{\alpha}g}_{\alpha \in \LG^\perp,
      \beta \in \LL^\perp}$, see \cite[Proposition 3.3]{MR1350700}.
  \end{enumerate}
\end{remark}

\subsection{The duality principle}
\label{sec:duality-principle}

The duality principle for lattice Gabor systems in $L^2(\R^n)$ was proven simultaneously by three groups of authors,
Daubechies, Landau and Landau~\cite{MR1350701}, Janssen~\cite{MR1350700}, and Ron and
Shen~\cite{MR1460623}. Theorem~\ref{thm:ron-shen-duality} below
generalizes this principle to co-compact Gabor systems in $L^2(G)$. 
Our proof of the duality principle relies on the following result on Riesz sequences in abstract Hilbert spaces, \cf
Definition~\ref{def:riesz-seq}.  It is a subspace variant of \cite[Theorem
3.4.4]{MR2428338} and \cite[Theorem 7.13]{MR2744776}; its proof is due to Ole
Christensen.

\begin{theorem}
\label{thm:riesz-seq}
  Let $\seq{f_k}$ be a sequence in a Hilbert space. Then the following statements are equivalent:
  \begin{enumerate}[(a)]
  \item $\seq{f_k}$ is a Riesz sequence with lower bound $A$ and upper bound $B$, \label{item:riesz1}
  \item $\seq{f_k}$ is a Bessel system with bound $B$ and possesses a biorthogonal system
    $\seq{g_k}$ that is also a Bessel system with bound $A^{-1}$. \label{item:riesz2}
  \end{enumerate}
\end{theorem}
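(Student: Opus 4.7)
The plan is to prove the two implications separately: (b)$\Rightarrow$(a) is essentially a single computation using biorthogonality, while (a)$\Rightarrow$(b) requires constructing the biorthogonal system as the canonical dual Riesz basis on the closed span $\mathcal{K}=\overline{\Span}\{f_k\}$.

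For (b)$\Rightarrow$(a), the upper Riesz bound $\|\sum_k c_k f_k\|^2\le B\sum_k\lvert c_k\rvert^2$ is the standard duality between synthesis and analysis operators and is therefore literally equivalent to the Bessel assumption for $\{f_k\}$ with bound $B$. For the lower Riesz bound I would fix a finite sequence $\{c_k\}$, set $f=\sum_k c_k f_k$, and combine biorthogonality $c_j=\langle f,g_j\rangle$ with the Bessel property of $\{g_k\}$ to obtain
\[
\sum_k \lvert c_k\rvert^2 = \sum_k \lvert\langle f,g_k\rangle\rvert^2 \le A^{-1}\|f\|^2,
\]
which rearranges to exactly $A\sum_k\lvert c_k\rvert^2\le\|\sum_k c_k f_k\|^2$.

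For the harder direction (a)$\Rightarrow$(b), I would let $\mathcal{K}=\overline{\Span}\{f_k\}$ and observe that the Riesz condition is equivalent to saying that the synthesis operator $T\colon\ell^2\to\mathcal{K}$, $Te_k=f_k$, extends to a bounded linear bijection with $\|T\|\le\sqrt{B}$ and $\|T^{-1}\|\le A^{-1/2}$. Then $T^\ast\colon\mathcal{K}\to\ell^2$ is also boundedly invertible, and I would define the biorthogonal system by
\[
g_k \defined (T^\ast)^{-1}e_k \in \mathcal{K}.
\]
Biorthogonality is immediate from $\langle f_j,g_k\rangle=\langle Te_j,(T^\ast)^{-1}e_k\rangle=\langle e_j,e_k\rangle=\delta_{jk}$. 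For the Bessel bound, letting $P_{\mathcal{K}}$ be the orthogonal projection onto $\mathcal{K}$ and using $g_k\in\mathcal{K}$, I compute
\[
\sum_k\lvert\langle f,g_k\rangle\rvert^2 = \sum_k\lvert\langle T^{-1}P_{\mathcal{K}}f,e_k\rangle\rvert^2 = \|T^{-1}P_{\mathcal{K}}f\|_{\ell^2}^2 \le A^{-1}\|f\|^2.
\]

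There is no genuine analytical obstacle here; the only subtlety requiring attention is that $\{f_k\}$ need not be total in $\mathcal{H}$, so both the construction of $\{g_k\}$ and its Bessel estimate must be carried out on the subspace $\mathcal{K}$ and then transferred back to $\mathcal{H}$ via $P_{\mathcal{K}}$. Since $\|P_{\mathcal{K}}f\|\le\|f\|$, the Bessel constant $A^{-1}$ obtained inside $\mathcal{K}$ extends verbatim to all of $\mathcal{H}$, which gives exactly the stated matching of constants.
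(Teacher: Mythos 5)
Your proof is correct, and for the direction (b)$\Rightarrow$(a) it is genuinely more direct than the paper's. The paper first shows that the biorthogonality plus the Cauchy--Schwarz estimate
\[
\norm{f}^2=\absBig{\sum_k\innerprod{f}{g_k}\innerprod{f_k}{f}}\le A^{-1/2}\norm{f}\Bigl(\sum_k\abs{\innerprod{f}{f_k}}^2\Bigr)^{1/2}
\]
gives the lower \emph{frame} bound $A$ on $\overline{\Span}\seq{f_k}$, and then invokes the separate (cited, not re-proved) fact that a frame sequence possessing a biorthogonal system is a Riesz sequence with the same bounds. You bypass that auxiliary fact entirely: writing $f=\sum_k c_kf_k$ for a finite sequence, biorthogonality gives $c_k=\innerprod{f}{g_k}$, and the Bessel bound $A^{-1}$ for $\seq{g_k}$ immediately yields the lower \emph{Riesz} inequality $A\sum_k\abs{c_k}^2\le\normsmall{\sum_k c_kf_k}^2$. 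This is self-contained and arguably the cleaner argument, at the cost of not recording the intermediate frame-sequence statement. For (a)$\Rightarrow$(b) the two proofs coincide in substance: the paper simply appeals to the existence of the unique dual Riesz sequence of $\seq{f_k}$ in $V=\overline{\Span}\seq{f_k}$, whereas you construct it explicitly as $g_k=(T^\ast)^{-1}e_k$ and verify the biorthogonality and the Bessel bound $A^{-1}$ by hand, including the correct use of the projection onto the closed span; this makes explicit exactly the bound-matching that the paper leaves to the reader. Both directions are sound and the constants come out exactly as stated.
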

\begin{proof}
  Assume that \eqref{item:riesz1} holds. Set $V=\overline{\Span}\seq{f_k}$. Let $\seq{g_k}$ be the
  unique dual Riesz sequence of $\seq{f_k}$ in $V$ so that $\overline{\Span}\seq{g_k}=V$. This
  implies \eqref{item:riesz2}.

  Assume that \eqref{item:riesz2} holds. Since $\seq{f_k}$ and $\seq{g_k}$ are biorthogonal, it
  follows that
  \[
  f_j = \sum_k \innerprod{f_j}{g_k}f_k
  \]
  for all $j$. By linearity, we have, for any $f \in \Span\seq{f_k}$,
  \[
  f = \sum_k \innerprod{f}{g_k}f_k.
  \]
  This formula extends to $\overline{\Span}\seq{f_k}$ by continuity. Now, for any $f \in
  \overline{\Span}\seq{f_k}$, we have
\begin{align}
  \norm{f}^2 &= \abs{\innerprod{f}{f}} = \absBig{\sum_k\innerprod{f}{g_k}\innerprod{f_k}{f}} \nonumber \\
  &\le \biggl(\sum_k\abs{\innerprod{f}{g_k}}^2 \, \sum_k\abs{\innerprod{f}{f_k}}^2 \biggr)^{1/2} \nonumber \\
  &\le A^{-1/2} \norm{f} \biggl(
  \sum_k\abs{\innerprod{f}{f_k}}^2\biggr)^{1/2}. \label{eq:bessel-and-dual}
\end{align}
We see that $\seq{f_k}$ is a frame sequence with lower frame bound $A$; by assumption the upper
frame bound is $B$. By the fact that $\seq{f_k}$ possesses a biorthogonal sequence, it follows that
$\seq{f_k}$ is, in fact, a Riesz sequence with the same bounds.
\end{proof}


\begin{theorem}
  \label{thm:ron-shen-duality}
  Let $g \in L^2(G)$. Let $\Lambda\subset G$ and $\Gamma\subset \ghat$
  be closed, co-compact subgroups. Then $\gaborset$
  is a frame for $L^2(G)$ with bounds $A$ and $B$ if, and only if, 
  $\{E_{\beta}T_{\alpha}g\}_{\alpha\in\Gamma^{\perp},\beta\in\Lambda^{\perp}}$ Riesz sequence with
  bounds $A$ and $B$.
\end{theorem}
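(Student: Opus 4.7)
The plan is to deduce both directions by combining three ingredients from earlier in the paper: the Bessel duality of Proposition~\ref{thm:bessel-duality}, the Wexler--Raz relations (Theorem~\ref{th:WexRaz} together with Remark~\ref{rem:wexrazeq}(i)), and the abstract Riesz sequence criterion of Theorem~\ref{thm:riesz-seq}, which characterises Riesz sequences as Bessel systems admitting a biorthogonal Bessel system.

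For the forward direction, assume $\gaborset$ is a frame with bounds $A,B$. I would set $h=S^{-1}g$ and invoke Lemma~\ref{thm:frame-op-commutes-time-freq} to identify the canonical dual frame as $\gaborset[h]$, which has upper frame bound $A^{-1}$. Proposition~\ref{thm:bessel-duality} then transfers the Bessel bounds $B$ and $A^{-1}$ to the discrete adjoint systems $\{E_\beta T_\alpha g\}_{\alpha\in\Gamma^\perp,\beta\in\Lambda^\perp}$ and $\{E_\beta T_\alpha h\}_{\alpha\in\Gamma^\perp,\beta\in\Lambda^\perp}$, while duality of the continuous frames combined with Theorem~\ref{th:WexRaz} and Remark~\ref{rem:wexrazeq}(i) makes the two discrete systems biorthogonal. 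Theorem~\ref{thm:riesz-seq} now yields that $\{E_\beta T_\alpha g\}_{\alpha\in\Gamma^\perp,\beta\in\Lambda^\perp}$ is a Riesz sequence with bounds $A$ and $B$.

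The converse reverses this chain and contains the main technical obstacle. Suppose $\{E_\beta T_\alpha g\}_{\alpha\in\Gamma^\perp,\beta\in\Lambda^\perp}$ is a Riesz sequence with bounds $A,B$. By Theorem~\ref{thm:riesz-seq} it is Bessel with bound $B$ and admits a biorthogonal system $\{\tilde g_{\alpha,\beta}\}$ inside $V=\overline{\Span}\{E_\beta T_\alpha g\}$ that is Bessel with bound $A^{-1}$. The critical step is to recognize that this biorthogonal system again has Gabor form: setting $h=\tilde g_{0,1}\in V$, a short computation using the commutation $T_a E_\chi=\overline{\chi(a)}E_\chi T_a$ shows that $\langle E_\beta T_\alpha h, E_{\beta'}T_{\alpha'} g\rangle=\delta_{\alpha,\alpha'}\delta_{\beta,\beta'}$; combined with the invariance of $V$ under each $E_\beta T_\alpha$ (up to a unimodular scalar) and the uniqueness of the biorthogonal system in $V$, this forces $\tilde g_{\alpha,\beta}=E_\beta T_\alpha h$ for all $(\alpha,\beta)\in\Gamma^\perp\times\Lambda^\perp$. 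With $h$ in hand, Proposition~\ref{thm:bessel-duality} in the reverse direction makes $\gaborset$ and $\gaborset[h]$ Bessel with bounds $B$ and $A^{-1}$, the established biorthogonality is exactly \eqref{eq:wexraz}, and Theorem~\ref{th:WexRaz} concludes that they are dual frames. Finally, Cauchy--Schwarz applied to the weak duality identity \eqref{eq:cont-dual-weak} gives
\[
\|f\|^{4}\le A^{-1}\,\|f\|^{2}\int_{\Gamma}\!\int_{\Lambda}|\langle f,E_{\gamma}T_{\lambda}g\rangle|^{2}\,d\mu_{\Lambda}(\lambda)\,d\mu_{\Gamma}(\gamma)\qquad\text{for all }f\in L^{2}(G),
\]
which supplies the lower frame bound $A$; the upper bound $B$ has already been recorded from Bessel duality.
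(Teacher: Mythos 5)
Your proposal is correct and follows essentially the same route as the paper: canonical dual frame plus Proposition~\ref{thm:bessel-duality}, Wexler--Raz biorthogonality, and Theorem~\ref{thm:riesz-seq} in the forward direction, and the reverse chain ending with a Cauchy--Schwarz estimate as in \eqref{eq:bessel-and-dual} for the lower bound. The only difference is that you justify in detail the claim that the dual Riesz sequence of the adjoint system again has Gabor form $\{E_{\beta}T_{\alpha}h\}_{\alpha\in\Gamma^{\perp},\beta\in\Lambda^{\perp}}$ (via invariance of $V$ under the adjoint time-frequency shifts and uniqueness of the biorthogonal system), a step the paper asserts without proof.
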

\begin{proof}
  Let $\gaborset$ be a frame with bounds $A$ and $B$. The canonical dual frame $\gaborset[S^{-1}g]$
  has bounds $B^{-1}$ and $A^{-1}$. By Proposition~\ref{thm:bessel-duality}, the sequences
  $\{E_{\beta}T_{\alpha}g\}_{\alpha\in\Gamma^{\perp},\beta\in\Lambda^{\perp}}$ and
  $\{E_{\beta}T_{\alpha}S^{-1}g\}_{\alpha\in\Gamma^{\perp},\beta\in\Lambda^{\perp}}$ are Bessel systems with
  bound $B$ and $A^{-1}$, respectively. By Wexler-Raz biorthogonal relations, these two families are
  biorthogonal, hence, by Theorem~\ref{thm:riesz-seq},
  $\{E_{\beta}T_{\alpha}g\}_{\alpha\in\Gamma^{\perp},\beta\in\Lambda^{\perp}}$ is a Riesz sequence
  with bounds $A$ and $B$.

  Conversely, suppose $\{E_{\beta}T_{\alpha}g\}_{\alpha\in\Gamma^{\perp},\beta\in\Lambda^{\perp}}$
  is a Riesz sequence with bounds $A$ and $B$. The dual Riesz sequence of
  $\{E_{\beta}T_{\alpha}g\}_{\alpha\in\Gamma^{\perp},\beta\in\Lambda^{\perp}}$ is of the form
  $\{E_{\beta}T_{\alpha}h\}_{\alpha\in\Gamma^{\perp},\beta\in\Lambda^{\perp}}$ for some $h \in
  L^2(G)$ and has bounds $B^{-1}$ and $A^{-1}$. Using Proposition~\ref{thm:bessel-duality} we see
  that $\gaborset$ has Bessel bound $B$. On the other hand, $\gaborset[h]$ has Bessel bound
  $A^{-1}$. By Wexler-Raz biorthogonal relations, $\gaborset$ and $\gaborset[h]$ are dual frames. By
  a computation as in (\ref{eq:bessel-and-dual}), we see that $A$ is a lower frame bound for
  $\gaborset$.
\end{proof}

The co-compactness assumption on $\LL$ and $\LG$ is a natural framework
for the duality principle. Indeed, if the Gabor system is not co-compact,
the adjoint system is not discrete. However, we know by a result of
Bownik and Ross~\cite{BowRos2014} that continuous Riesz sequences do
not exist. Hence, if either $\LL$ or $\LG$ is not co-compact, the
adjoint Gabor system cannot be a Riesz ``sequence''.

Since a Riesz sequence with bounds $A=B$ is an orthogonal sequence, we have the following corollary
of Theorem~\ref{thm:ron-shen-duality}.
\begin{corollary}
  Let $\Gamma$ and $\Lambda$ be closed, co-compact subgroups. A Gabor system
  $\{E_{\gamma}T_{\lambda}g\}_{\lambda\in\Lambda,\gamma\in\Gamma}$ is a tight frame if, and only if, 
  $\{E_{\beta}T_{\alpha}g\}_{\alpha\in\Gamma^{\perp},\beta\in\Lambda^{\perp}}$ is an orthogonal
  system. In these cases, the frame bound is given by $A=\Vert g \Vert^2.$
\end{corollary}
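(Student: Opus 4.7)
The plan is to deduce this corollary directly from Theorem~\ref{thm:ron-shen-duality} combined with the elementary fact that a Riesz sequence with coinciding bounds is precisely an orthogonal sequence of constant norm. First, I would invoke Theorem~\ref{thm:ron-shen-duality} to translate the assertion: $\gaborset$ is a tight frame with bound $A$ if, and only if, $\{E_{\beta}T_{\alpha}g\}_{\alpha\in\Gamma^{\perp},\beta\in\Lambda^{\perp}}$ is a Riesz sequence with equal bounds $A=B$.

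The heart of the argument is then the following claim: a Riesz sequence $\{f_k\}$ in a Hilbert space with bounds $A=B$ is an orthogonal sequence with $\|f_k\|^2=A$ for every $k$, and conversely. Indeed, Definition~\ref{def:riesz-seq} with $A=B$ reads $\|\sum c_k f_k\|^2 = A\sum|c_k|^2$ for all finite sequences $\{c_k\}$. Choosing $c_k=\delta_{k,j}$ yields $\|f_j\|^2=A$, while the choices $c_k=\delta_{k,i}\pm\delta_{k,j}$ and $c_k=\delta_{k,i}\pm i\,\delta_{k,j}$ with $i\neq j$ together with the already established fact $\|f_k\|^2=A$ force $\langle f_i,f_j\rangle=0$ via polarization. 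The converse is immediate by expanding $\|\sum c_k f_k\|^2$.

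Now I would apply this to the adjoint family $\{E_{\beta}T_{\alpha}g\}_{\alpha\in\Gamma^{\perp},\beta\in\Lambda^{\perp}}$. Since $E_\beta$ and $T_\alpha$ are unitary, every element of this family has norm $\|g\|$. Therefore, if it is a Riesz sequence with $A=B$, the common squared norm must equal $A$, giving $A=\|g\|^2$; and orthogonality of the family is equivalent to the Riesz sequence property with $A=B=\|g\|^2$. Reading this equivalence back through Theorem~\ref{thm:ron-shen-duality} gives exactly the corollary, including the value $A=\|g\|^2$ of the frame bound. There is no real obstacle here: the only ingredient beyond the duality principle is the polarization computation, which is routine.
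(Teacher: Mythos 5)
Your argument is correct and is essentially the paper's own: the corollary is deduced from Theorem~\ref{thm:ron-shen-duality} together with the observation that a Riesz sequence with coinciding bounds is an orthogonal sequence, and since each $E_{\beta}T_{\alpha}g$ has norm $\norm{g}$ the common bound is $\norm{g}^2$. You merely spell out the routine polarization step that the paper leaves implicit.
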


We end this paper with the following general remark:
\begin{remark}
  We have stated the results of the current paper for Gabor systems
  generated by a single function, however, most of the results can be
  stated for finitely or even infinitely many generators; the
  non-existence result, Theorem
  \ref{thm:non-existence-cont-gabor-critical-sampling}, is of course
  an exception to this rule.
\end{remark}

\smallskip
\paragraph{Acknowledgments.} The authors thank Ole Christensen for
useful discussions and for the proof of
Theorem~\ref{thm:riesz-seq}. The first-named author also thanks Hans G.\ Feichtinger for discussions and pointing out references concerning $S_0$. 


\def\cprime{$'$} \def\cprime{$'$} \def\cprime{$'$}
  \def\uarc#1{\ifmmode{\lineskiplimit=0pt\oalign{$#1$\crcr
  \hidewidth\setbox0=\hbox{\lower1ex\hbox{{\rm\char"15}}}\dp0=0pt
  \box0\hidewidth}}\else{\lineskiplimit=0pt\oalign{#1\crcr
  \hidewidth\setbox0=\hbox{\lower1ex\hbox{{\rm\char"15}}}\dp0=0pt
  \box0\hidewidth}}\relax\fi} \def\cprime{$'$} \def\cprime{$'$}
  \def\cprime{$'$} \def\cprime{$'$} \def\cprime{$'$} \def\cprime{$'$}

\end{document}